\def\path{{\tt path}}
\newcommand{\ccal}{\mathcal{C}}
\newcommand{\fcal}{\mathcal{F}}
\newcommand{\rcal}{\mathcal{R}}
\newcommand{\scal}{\mathcal{S}}
\newcommand{\tcal}{\mathcal{T}}
\newcommand{\vcal}{\mathcal{V}}
\newcommand{\kcal}{\mathcal{K}}
\newcommand{\real}{\mathbb{R}}
\newcommand{\cir}{\mathbb{T}}
 \newcommand{\ga}{\alpha}
 \newcommand{\gb}{\beta}
 \newcommand{\gc}{\gamma}
 \newcommand{\gd}{\delta}
 \newcommand{\gD}{\Delta}
 \newcommand{\gee}{\epsilon}
 \newcommand{\gl}{\lambda}
 \newcommand{\go}{\omega}
 \newcommand{\gs}{\sigma}
\newcommand{\eps}{\varepsilon}
\newcommand{\id}{\mathrm{id}}
\newcommand{\bdm}{\begin{displaymath}}
\newcommand{\edm}{\end{displaymath}}
\newcommand{\bea}{\begin{eqnarray*}}
\newcommand{\eea}{\end{eqnarray*}}
\newcommand{\bean}{\begin{eqnarray}}
\newcommand{\eean}{\end{eqnarray}}
\newcommand{\prob}{\mathbb{P}}
\newcommand{\E}{\mathbb{E}}
\newcommand{\G}{\widetilde G}
\newcommand{\p}{\widetilde p}
\newcommand{\wt}{\widetilde}
\newcommand{\x}{{\mathbf{x}}}
\newcommand{\y}{{\mathbf{y}}}
\newcommand{\bxi}{{\boldsymbol{\xi}}}
\newcommand{\bzeta}{{\boldsymbol{\zeta}}}
\newcommand{\bfeta}{\boldsymbol{\eta}}
\theoremstyle{plain} \newtheorem{theorem}{Theorem}[section]
\theoremstyle{plain} \newtheorem{proposition}[theorem]{Proposition}
\theoremstyle{plain} \newtheorem{corollary}[theorem]{Corollary}
\theoremstyle{plain} \newtheorem{lemma}[theorem]{Lemma}
\theoremstyle{definition} \newtheorem{definition}[theorem]{Definition}
\theoremstyle{definition} \newtheorem{assumption}[theorem]{Assumption}
\theoremstyle{remark}
\theoremstyle{remark}
\numberwithin{equation}{section}
\begin{document}

\title[Coalescing particle systems]{Coalescing systems of \\
non-Brownian particles}

\author{Steven N. Evans}
\email{evans@stat.Berkeley.EDU}

\address{Department of Statistics \#3860 \\
 University of California at Berkeley \\
367 Evans Hall \\
Berkeley, CA 94720-3860 \\
U.S.A.}

\author{Ben Morris}
\email{morris@math.ucdavis.edu}

\address{Department of Mathematics \\
University of California at Davis \\
Mathematical Sciences Building \\
One Shields Avenue \\
Davis, CA 95616 \\
U.S.A.}

\author{Arnab Sen}
\email{arnab@stat.Berkeley.EDU}

\address{Department of Statistics \#3860 \\
 University of California at Berkeley \\
367 Evans Hall \\
Berkeley, CA 94720-3860 \\
U.S.A.}

\thanks{SNE supported in part by NSF grants DMS-0405778 and DMS-0907630}
\thanks{BM supported in part by NSF grant DMS-0707144}

\date{\today}

\keywords{stepping stone model, Brownian web, fractal, hitting time, coalescing particle system}

\subjclass{60G17, 60G52, 60J60, 60K35}

\begin{abstract}
A well-known result of Arratia shows that one can make rigorous the notion 
of starting an independent Brownian motion at every point of an arbitrary closed subset of
the  real line and then building a set-valued process by requiring particles to
coalesce when they collide.  
Arratia noted that the value of this process will be almost
surely a locally finite set at all positive times, 
and a finite set almost surely
if the initial value is compact: the key to both of these facts is the
observation that, because of the topology of the real line and the continuity
of Brownian sample paths, 
at the time when two particles collide one or the other 
of them must have already collided with each particle 
that was initially between them.   
We investigate whether such
instantaneous coalescence 
still occurs for coalescing systems of particles where either the 
state space of the individual particles is not locally homeomorphic to an
interval or the sample paths of the individual particles are discontinuous.
We give a quite general criterion for a coalescing system of
particles on a compact state space to coalesce to a finite
set at all positive times almost surely
and show that there is almost sure
instantaneous coalescence to a locally finite set 
for systems of Brownian motions on the Sierpinski gasket
and stable processes on the real line with stable index greater than one.
\end{abstract}
\maketitle

\section{Introduction}

A construction due to Richard Arratia \cite{arratia79, arratia81}
shows that it is possible to make rigorous
sense of the informal notion of starting an independent Brownian motion at each
point of the real line and letting particles coalesce when they collide.

Arratia proved that the set of particles remaining
at any positive time is locally finite almost surely.    
Arratia's argument is based on the simple
observation that at the time two particles collide, one or the other
must have already collided with each particle that was initially between them.
The same argument shows that if we start an independent circular
Brownian motion at each
point of the circle and let particles coalesce when they collide,
then, almost surely, there are only finitely many particles remaining
at any positive time.

Arratia established something even stronger: 
it is possible to construct a flow of random maps 
$(F_{s,t})_{s < t}$ from the real line
to itself in such a way that for each fixed $s$ the process 
$(F_{s,s+u})_{u \ge 0}$
is given by the above particle system.  
Arratia's flow has since been studied by several
authors such as  \cite{MR1640799, MR1785393, 
MR1917545, MR2060298, MR2079671, MR2094432, MR2519355} for purposes as
diverse as giving a rigorous definition of 
a one-dimensional self-repelling Brownian
motion to providing examples of noises that are, in some sense, completely ``orthogonal'' to those
produced by Poisson processes or Brownian motions.

Coalescing systems of more general Markov processes have been investigated
because of their appearance as the duals of models in genetics of the stepping stone type,
see, for example,
\cite{MR1404524, MR1415234, MR1457055,  MR1797304, 
MR1986843, MR2113855, MR2162813, MR2259212, MR2384748}.

We show in Section~\ref{S:countable} that if
$E$ is a locally compact, second-countable, 
Hausdorff (and hence metrizable) space 
and $X$ is a Feller process on $E$, then it is possible to define
a process $\bzeta$ taking values in $E^{\mathbb{N}}$ with the property that
the coordinate processes evolve as independent copies of $X$ until two 
such processes collide, after which those two coordinate processes evolve
as a common copy of $X$.  Write $\Xi_t$ for the closure
of the random countable set $\{\zeta_i(t) : i \in \mathbb{N}\}$.  
We demonstrate that if $\x'$ and $\x''$ 
are two elements of $E^{\mathbb{N}}$ with the same
closure, then the distribution of $\Xi$ when
$\bzeta$ starts from $\x'$ is the same as that when
$\bzeta$ starts from $\x''$, and it follows that $\Xi$
is a strong Markov process.  Taking the entries in the sequence
$\bzeta_0 \in E^{\mathbb{N}}$ to be a countable dense subset of $E$
gives $\Xi_0 = E$ and corresponds to the intuitive idea of constructing
a coalescing particle system with an initial
condition consisting of a particle at each point of $E$.

Arratia's ``topological'' argument for the instantaneous coalescence 
of such a system to a locally finite set fails when one considers 
Markov processes on the line or circle with discontinuous sample paths or
Markov processes with state spaces that are not locally like the real line.
We show, however, that analogous conclusions 
holds for coalescing Brownian motions on
the ``finite'' and ``infinite'' (that is, compact and non-compact) 
Sierpinski gaskets and stable processes on the circle and line -- 
provided, of course, that the stable index is greater than $1$, so that an 
independent pair of such motions collides with positive probability.

In order to motivate some of the estimates that we develop for each
of these cases, we first give a brief sketch of how our general approach
applies to the case where the underlying Markov motion $X$ is an
appropriate process on a compact state space.
Suppose, then, that the space $E$ is compact with its topology
metrized by a metric $r$.  Consider a strong
Markov process $X$ with state space $E$. 
Let $X'$ and $X''$
be independent copies of $X$ started from $x'$ and $x''$.
Assume there are constants $\beta, \alpha, p > 0$ (not depending
on $x',x''$) such that for
all $\epsilon > 0$ 
\begin{equation}
\label{hitting_lower_bound}
r(x',x'') \le \epsilon
\quad \longrightarrow \quad
\mathbb{P}\{\exists 0 \le s \le \beta \epsilon^\alpha : X_s' = X_s''\}
\ge
p
\end{equation}
 (for example, such a bound
holds when $X$ is a stable process on the circle
with stable index $\alpha > 1$).   Suppose further that 
there are  constants $C,\kappa > 0$ such that for all subsets
$A \subseteq E$
\begin{equation}
\label{Lebesgue_number}
\# A > n
 \quad \longrightarrow \quad
r(x',x'') \le C n^{-\kappa} \text{ for some } x',x'' \in A, \, x' \ne x'', 
\end{equation}
(for example, $\kappa = 1$ for the circle).

If we start with $n+1$ particles in some configuration on $E$,
then there are at least two particles within distance at most
$C n^{-\kappa}$, and,
with probability at least $p$, these two particles in isolation
collide with each other by time $\beta C^{-\alpha} n^{-\kappa \alpha}$. 
Hence, in the coalescing system
the probability that there is at least one collision between some
pair of particles within the time interval  
$[0, \beta C^{-\alpha}  n^{-\kappa\alpha}] $ is 
certainly at least $p$ (either the two distinguished particles
collide with each other and no others or some other particle(s)
collides with one or both of the distinguished particles).  
Moreover, if  there is no collision between any pair of particles after time 
$\beta C^{-\alpha} n^{-\kappa \alpha} $,  
then we can again find at time $\beta C^{-\alpha} n^{-\kappa \alpha}$ a
possibly different pair of particles that are within distance 
$C n^{-\kappa}$ from each other,  and the probability that this pair of
 particles will collide within the time interval  
$[ \beta C^{-\alpha} n^{-\kappa \alpha}, 
2\beta C^{-\alpha} n^{-\kappa \alpha} ] $ 
is again at least $p$. 
By repeating this argument and using the Markov property, we see that if we
let $\tau^{n+1}_{n}$ be the first time there are $n$ or fewer surviving particles starting from $(n+1)$ particles, then, regardless of the particular initial configuration of the
$n+1$ particles, 
\[ 
\prob\left \{ \tau^{n+1}_{n}   \ge  k  \beta C^{-\alpha} n^{-\kappa \alpha} \right \} \le (1-p)^k.
\]
In particular, the expected time needed to reduce the number of particles  from $n+1$ to $n$ or fewer is bounded above by $c n^{-\kappa \alpha}$ for a suitable constant $c$. 

Suppose now that $\kappa \alpha > 1$.
If we start with $N$ particles somewhere on $E$, then the probability that after some positive time $t$ the number of  particles remaining is greater than $m$ is, by Markov's inequality, bounded above by 
\[    
\frac 1 t  \sum_{n = m}^{N-1} \E \left[\tau^{n+1}_{n}\right] \le  \frac c t  \sum_{n = m}^{N-1} n^{-\kappa \alpha} \le  \frac {c'}{ t} m^{1-\kappa \alpha}
\]
for some constant $c'$.
The probability in question therefore converges to zero as
$N \to \infty$ and then $m \to \infty$.  It follows that, even
if we start with a coalescing particle at each point of $E$,
by time $t$ there are only finitely many particles almost surely
(see the last part of the proof of Theorem~\ref{thm_main1_gasket} 
for the proof that the convergence to zero of the given probability implies
that an infinite coalescing system coalesces to finitely many
points instantaneously).

The above argument required three ingredients, the collision probability
bound \eqref{hitting_lower_bound}, the estimate \eqref{Lebesgue_number}
that provides quantitative information on the extent to which $E$
is totally bounded, and the assumption $\alpha \kappa > 1$.  
We show in Section~\ref{S:compact_space} that
\eqref{hitting_lower_bound} follows from suitable upper and lower
bounds on the transition densities of the process with respect
to some reference probability measure $\mu$, whereas \eqref{Lebesgue_number}
follows from the assumption that the $\mu$ mass of
any ball of radius $\epsilon$ is bounded below by a constant
multiple of $\epsilon^{1/\kappa}$.   Informally,
both of these conditions hold when the state space
and the process have suitable approximate local self-similarity
properties.

The compactness, and hence total boundedness, of the state space
was crucial for the ``pigeonhole principle'' reasoning
that we used above. The same method cannot
be applied as it stands to deal with, say, 
coalescing stable processes on the real line to show a
result of the Arratia type that the set of particles
remaining at some positive time is locally finite.  The primary
difficulty is that the proof bounds the time to coalesce from some number 
of particles to a smaller number by considering 
a particular sequence of coalescent
events, and while waiting for one of these events to occur 
the particles might spread out
to such an extent that the pigeonhole argument can no longer be used.
We overcome this problem by developing a more
sophisticated  pigeonhole argument that assigns the bulk of the particles to
a collection of suitable disjoint pairs (rather than just selecting a single
suitable pair) and then employing a simple large deviation bound to
ensure that with high probability at least a 
certain fixed proportion of the pairs will
have collided over an appropriate time interval. 

We are able to carry this general approach through for
stable processes with index $\alpha > 1$ and
Brownian motions on the infinite (that is, unbounded) Sierpinski gasket.
Elements of our argument seem rather specific to these two special
cases and differ between the two cases, 
and we don't have a general method that applies to
a broad class of processes with non-compact state spaces.
It is an interesting and challenging open problem to develop
techniques for non-compact state spaces that have
wider applicability.

We note that, as well as providing an interesting test case 
of a process with continuous sample paths on a state space 
that is not locally one-dimensional but is such that
two independent copies of the process will collide with positive probability,
the Brownian motion on the Sierpinski gasket was introduced as a model
for diffusion in disordered media and it has since attracted a considerable
amount of attention.  The reader can get a feeling for this literature
by consulting some of the earlier works
such as \cite{barlow88, MR988082, barlow95} and more recent papers such
as \cite{MR2018919, MR2161694} and the references therein.

\section{Countable systems of coalescing Feller processes}
\label{S:countable}

In this section we develop some general properties of coalescing systems
of Markov processes that we will apply later to Brownian motions
on the Sierpinski gasket and stable processes on the line or circle.

\subsection{Vector-valued coalescing process}\label{subsec_collision_rule}

Fix $N \in \mathbb N \cup \{ \infty\}$, where, as usual, 
$\mathbb N$ is the set of positive integers. 
Write $[N]$ for the set $\{1, 2, \ldots, N\}$ when $N$ is finite
and for the set $\mathbb N$ when $N = \infty$.

Fix a locally compact, second-countable, Hausdorff space $E$.
Note that $E$ is metrizable. Let $d$ be a metric giving the topology on $E$.
Denote by $D := D(\mathbb{R}_+,E)$ the usual Skorokhod space of $E$-valued c\`adl\`ag paths.
Fix a bijection $\sigma: [N] \rightarrow [N]$.  We will call $\sigma$ a {\em ranking} of $[N]$.
Define a mapping $\Lambda_\gs: D^N \rightarrow D^N$ by setting
$\Lambda_\sigma \bxi = \bzeta$ for $\bxi = (\xi_1, \xi_2, \ldots) \in D^N$, 
where $\bzeta$ is defined inductively as follows. Set $\zeta_{\gs(1)} \equiv \xi_{\gs(1)}$.
For $i > 1$, set
\[
\tau_i 
:=  
\inf \Big \{t \ge 0 : \xi_{\gs(i)}(t)  \in \{\zeta_{\gs(1)}(t) , \zeta_{\gs(2)}(t), \ldots, \zeta_{\gs(i-1)}(t)\}\Big \},
\] 
with the usual convention that $\inf \emptyset = \infty$.  Put
\[ 
J_i 
:= 
\min \Big \{ j \in \{1, 2, \ldots, i-1\}: \xi_{\gs(i)}(\tau_i) = \zeta_{\gs(j)}(\tau_i) \Big \} \quad \text{if } \tau_i < \infty.
\]
For $t \ge 0$, define
\[ 
\zeta_{\gs(i)}(t) := 
\begin{cases}
\xi_{\gs(i)}(t), &  \text{if $t <\tau_i$},\\
\zeta_{\gs(J_i)}(t), & \text{if $t \ge \tau_i$}.
\end{cases}
\]
We call the map $\Lambda_\sigma$ a {\em collision rule}.  It produces a vector of ``coalescing'' paths
from of a vector of ``free'' paths: after the free paths labeled $i$ and $j$ collide, the
corresponding coalescing paths both subsequently follow either the path labeled
$i$ or the path labeled $j$, according to whether  
$\sigma(i) < \sigma(j)$ or $\sigma(i) > \sigma(j)$. 
Note for each $n < N$ that the value of  
$(\zeta_{\sigma(i)})_{1 \le i \le n}$ is unaffected by the value of 
$(\xi_{\sigma(j)})_{j > n}$.

Suppose from now on that the paths $\xi_1, \xi_2, \ldots$ are realizations of independent 
copies of a Feller Markov process $X$ with state space $E$.

{\em A priori},  the distribution  of the finite or countable coalescing system $\bzeta = \Lambda_\gs \bxi$ depends on the ranking $\sigma$. However, we have the following result, 
which is a consequence of the strong Markov property of $\bxi$ and the observation
that if we are given a bijection $\pi:[N] \to [N]$ and define 
a map $\Sigma_\pi : D^N \rightarrow D^N$ by $(\Sigma_\pi \bxi)_i
= \xi_{\pi(i)}$, $i \in [N]$, then 
$\Sigma_\pi \Lambda_\sigma = \Lambda_{\sigma \pi^{-1}} \Sigma_{\pi}$.

\begin{lemma}[\cite{arratia79, arratia81}] \label{l:exchangeability}
The distribution of $\bzeta = \Lambda_\gs \bxi$ 
is the same for all bijections $\sigma:[N] \rightarrow [N]$.
\end{lemma}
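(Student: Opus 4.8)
The plan is to reduce the statement to the invariance of the law of $\bzeta$ under reversing the priority order of two particles that occupy consecutive ranks, and then to deduce the general case by composition, since such adjacent transpositions generate all bijections of $[N]$. When $N = \infty$ this reduction is justified by the remark following the construction: the coalescing paths of the particles of rank $1, \dots, m$ depend only on the free paths $\xi_{\sigma(1)}, \dots, \xi_{\sigma(m)}$, so every finite-dimensional marginal of $\bzeta$ is a function of finitely many of the free paths and it suffices to argue for finite $N$.

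So fix a finite $N$ and a ranking $\sigma$, write $a = \sigma(r)$ and $b = \sigma(r+1)$ for the particles at two consecutive ranks, and let $\sigma'$ be the ranking that agrees with $\sigma$ except that the priorities of $a$ and $b$ are interchanged. I want to show $\Lambda_\sigma \bxi \stackrel{d}{=} \Lambda_{\sigma'}\bxi$. First I would record the pathwise fact that the two constructions agree up to the collision time
\[
\tau := \inf\{ t \ge 0 : \zeta_a(t) = \zeta_b(t) \},
\]
which is the same functional of $\bxi$ in both systems: the relative priority of $a$ and $b$ cannot influence anything before their coalescing paths first coincide, and every other particle merges into the same higher-ranked cluster in both systems.

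The crux is the behaviour at $\tau$, which I would split into two $\fcal_\tau$-measurable cases. If $\tau = \infty$, or if at time $\tau$ the cluster carrying $a$ or the cluster carrying $b$ has already merged into a cluster led by a particle of rank $< r$, then that leader out-ranks both $a$ and $b$ under $\sigma$ and under $\sigma'$, the tie between $a$ and $b$ is never consulted, and the two systems coincide for all $t$. Otherwise $\zeta_a \equiv \xi_a$ and $\zeta_b \equiv \xi_b$ on $[0,\tau]$, so $\xi_a(\tau) = \xi_b(\tau)$, and thereafter the merged cluster follows $\xi_a$ in the $\sigma$-system and $\xi_b$ in the $\sigma'$-system. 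I would then define the map $\Phi$ on path space that fixes every coordinate other than $a$ and $b$, fixes $\xi_a$ and $\xi_b$ on $[0,\tau]$, interchanges their post-$\tau$ trajectories in this second case (this is c\`adl\`ag because $\xi_a(\tau) = \xi_b(\tau)$), and is the identity in the first case. Two facts then close the argument. First, $\Phi$ preserves the law of $\bxi$, and this is where the strong Markov property enters: $\tau$ is a stopping time, and on the relevant event $(\xi_a(\tau + \cdot), \xi_b(\tau + \cdot))$ are two independent copies of $X$ started from the common point $\xi_a(\tau)$, so interchanging their futures is measure preserving while the remaining coordinates are untouched. Second, $\Lambda_{\sigma'}\bxi = \Lambda_\sigma(\Phi \bxi)$ holds pathwise; checking this coordinate by coordinate uses that $a$ and $b$ occupy adjacent ranks, so no particle has priority strictly between them, and that the merged $\{a,b\}$-cluster traces the same trajectory under either description. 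This last identity is the temporal, post-$\tau$ incarnation of the relabelling relation $\Sigma_\pi \Lambda_\sigma = \Lambda_{\sigma \pi^{-1}} \Sigma_\pi$ quoted before the lemma. Combining the two facts gives $\Lambda_{\sigma'}\bxi = \Lambda_\sigma(\Phi\bxi) \stackrel{d}{=} \Lambda_\sigma \bxi$, as required.

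The step I expect to be the main obstacle is the rigorous justification of these two facts for the map $\Phi$. One must check that $\tau$, the dichotomy between the two cases, and the identities $\zeta_a \equiv \xi_a$ and $\zeta_b \equiv \xi_b$ on $[0,\tau]$ are all $\fcal_\tau$-measurable before the strong Markov property can be applied; and one must verify the intertwining $\Lambda_{\sigma'}\bxi = \Lambda_\sigma(\Phi\bxi)$ not only for the coordinates $a$ and $b$ but for every lower-ranked particle that eventually coalesces with the $\{a,b\}$-cluster. The adjacency of the two swapped ranks is exactly what keeps this final bookkeeping tractable.
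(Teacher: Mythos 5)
Your proposal is correct and follows essentially the route the paper indicates: the paper gives no detailed proof of its own, citing Arratia and remarking only that the lemma is a consequence of the strong Markov property of $\bxi$ together with the relabelling identity $\Sigma_\pi \Lambda_\sigma = \Lambda_{\sigma\pi^{-1}}\Sigma_\pi$, and your argument --- reducing to a swap of two adjacent ranks and using the strong Markov property at the collision time $\tau$ to exchange the two post-collision futures via the measure-preserving map $\Phi$ --- is precisely a fleshed-out version of that sketch. The bookkeeping you flag (measurability of the dichotomy at $\tau$, the intertwining $\Lambda_{\sigma'}\bxi = \Lambda_\sigma(\Phi\bxi)$ for the lower-ranked particles, and, for $N=\infty$, embedding the two finite rank-sets $\{\sigma(1),\dots,\sigma(m)\}$ and $\{\sigma'(1),\dots,\sigma'(m')\}$ into a common finite system before applying the finite case) is tedious but standard and does go through.
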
 

From now on, we will, unless we explicitly state otherwise, 
take $\sigma = \id$, where $\id: [N] \rightarrow [N]$ is the identity bijection.
To simplify notation, we will write $\Lambda$ for the collision rule $\Lambda_\id$.

It is intuitively clear that the coalescing system $\bzeta$ is Markov. 
For the sake of completeness, we establish this formally in the next lemma,
the proof of which is essentially an argument from \cite{arratia79, arratia81}. 

Define the right-continuous filtration $(\fcal_t)_{t\ge 0}$  by
\[
\fcal_t := \bigcap_{\eps > 0} \sigma \{ \xi_i(s) : s \le t + \eps, i \ge 1\}.
\]

\begin{lemma} \label{l:strong_markov}
The stochastic process $\bzeta = \Lambda \bxi$ is strong  Markov with
respect to the filtration $(\fcal_t)_{t\ge 0}$.
\end{lemma}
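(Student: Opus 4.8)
The plan is to realize the coalescing system $\bzeta = \Lambda\bxi$ as a functional of the free system $\bxi$ that ``restarts'' cleanly, and then to transport the strong Markov property of $\bxi$ through that functional. Throughout I write $\theta_t$ for the shift on path space, $(\theta_t\omega)(s)=\omega(t+s)$. I will need two ingredients. The first is that the vector free process $\bxi$ is itself strong Markov with respect to $(\fcal_t)$. Each coordinate is a copy of the Feller process $X$, hence strong Markov for its own right-continuous filtration, and since the coordinates are independent the whole array $\bxi$ is strong Markov for $(\fcal_t)$: for finite $N$ this is the standard fact that a finite product of Feller processes is Feller, hence strong Markov for its right-continuous filtration, and for $N=\infty$ one restricts to the first $n$ coordinates for each $n$ and uses the independence of the rest. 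Concretely, for any $(\fcal_t)$-stopping time $T$, conditionally on $\fcal_T$ the shifted coordinates $(\theta_T\xi_i)_i$ are independent fresh copies of $X$ started from $(\xi_i(T))_i$.

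The second, and essential, ingredient is a restart identity for $\Lambda$. Fix $t\ge 0$ (later a stopping time). At time $t$ the labels are partitioned into groups of mutually coalesced particles; for each $i$ let $\ell(i,t)$ be the smallest label in the group of $i$, which I call its \emph{leader}. Reading off the explicit definition of $\Lambda$, I would record three facts. First, the leader $\ell$ of any group has not yet coalesced, so $\tau_\ell>t$ and $\zeta_\ell$ still follows its own free path $\xi_\ell$ through time $t$; otherwise $\ell$ would lie in a group with a strictly smaller label $J_\ell$, contradicting minimality. Second, distinct groups occupy distinct positions at time $t$, since if the (coalescing $=$ free) paths of two leaders $\ell<\ell'$ agreed at time $t$ then $\tau_{\ell'}\le t$, contradicting the first fact. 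Hence the partition into groups, and the leader map $i\mapsto\ell(i,t)$, is a function of $\bzeta(t)$ alone, namely $\ell(i,t)=\min\{j:\zeta_j(t)=\zeta_i(t)\}$. Third, every non-leader copies its leader from its coalescence time onward. Combining these, the claim I would establish is
\[
\theta_t\bzeta=\Lambda\bfw,\qquad \bfw_i:=\theta_t\,\xi_{\ell(i,t)},
\]
where the driving array $\bfw$ starts from $\bfw_i(0)=\zeta_{\ell(i,t)}(t)=\zeta_i(t)$, i.e.\ $\bfw(0)=\bzeta(t)$. I would prove this by induction on $i$, matching the data $\tau_j,J_j$ produced by $\Lambda\bfw$ with the post-$t$ collisions of $\bzeta$: leaders evolve by their restarted free paths, already-coalesced labels start atop their leaders and so coalesce at once in $\Lambda\bfw$, and a future merger of two groups happens exactly when the two leaders' free paths meet, with the smaller label surviving.

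Granting the identity, the conclusion is immediate. Applying it at an $(\fcal_t)$-stopping time $T$, the first ingredient gives that conditionally on $\fcal_T$ the distinct leader paths $(\theta_T\xi_\ell)_\ell$ are independent fresh copies of $X$ from the group positions $\zeta_\ell(T)$; the array $\bfw$ is assembled from these through the leader map $i\mapsto\ell(i,T)$, which by the second fact is a function of $\bzeta(T)$. Therefore the conditional law of $\bfw$ given $\fcal_T$, and hence that of $\theta_T\bzeta=\Lambda\bfw$, depends on $\fcal_T$ only through $\bzeta(T)$. Since $\bzeta$ is c\`adl\`ag and adapted to $(\fcal_t)$, this is precisely the strong Markov property. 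For $N=\infty$ I would pass to the limit over finite truncations, using the consistency noted above that $(\zeta_i)_{i\le n}$ depends only on $(\xi_i)_{i\le n}$ (and $\ell(i,t)\le i$), so that all finite-dimensional conditional laws are determined by finite subsystems.

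The main obstacle will be the restart identity itself: the bookkeeping required to check, by induction on the label $i$, that re-running $\Lambda$ on the restarted leader paths reproduces $\theta_t\bzeta$ exactly, including the immediate collapse of already-coalesced labels and the correct choice of survivor at each later merger, together with the verification that the group/leader structure is measurable with respect to $\bzeta(t)$ (which is where the distinct-positions fact is used). Once this combinatorial step is secured, the upgrade to stopping times and to $N=\infty$ follows routinely from the strong Markov property of the free system.
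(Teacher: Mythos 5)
Your proof is correct and follows essentially the same route as the paper: your leader map $\ell(i,t)=\min\{j:\zeta_j(t)=\zeta_i(t)\}$ and restart identity $\theta_t\bzeta=\Lambda\bfw$, $\bfw_i=\theta_t\xi_{\ell(i,t)}$, are exactly the paper's maps $m$, $\Pi$, $\wt\Pi$ and its identity $\theta_s\Lambda\bfeta=\Lambda\wt\Pi(\Lambda\bfeta(s),\theta_s\bfeta)$, and both arguments conclude by applying the strong Markov property of the free system $\bxi$ at the stopping time. The paper merely packages the final conditioning step through the auxiliary function $g(\x,\y)=\E^\y\left[f\left(\Lambda\wt\Pi(\x,\bxi)\right)\right]$ together with the observation that $\Pi(\x,\y)=\x$ implies $g(\x,\y)=g(\x,\x)$, which is precisely your remark that the conditional law of $\bfw$ given $\fcal_T$ depends only on $\bzeta(T)$.
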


\begin{proof}
Define maps $m: \{1,2,\ldots, N\} \times E^N \to \{1,2,\ldots, N\}$ 
and  $\Pi: E^N \times E^N \to E^N$ by setting 
$ m(i, \x) :=  \min \{j: x_j = x_i \}$
and
$\Pi(\x, \y)_i := y_{m(i, \x)}$. Note that
\[
\Pi( \Lambda \bfeta(t), \bfeta(t)) := \Lambda \bfeta(t), \quad \bfeta \in D^N, \; t \ge 0.
\]
Define a map 
$\wt \Pi : E^N \times D^N \to D^N$ by
\[ 
\wt \Pi(\x, \bfeta)(t) = \Pi(\x, \bfeta(t)), \quad \x \in E^N, \; \bfeta \in D^N, \; t \ge 0.
\]
Writing $\{\theta_s\}_{ t \ge 0}$ for the usual family of
shift operators on $D^N$, that is, $(\theta_s \bfeta)(t) = \bfeta(s+t)$, we have
\[
\theta_s \Lambda \bfeta 
= \Lambda \wt \Pi(\Lambda \bfeta(s), \theta_s \bfeta), \quad \bfeta \in D^N, \; s \ge 0.
\]

Fix a bounded measurable function on $f: D^N \to \mathbb{R}$ and
set 
\[
g(\x, \y) 
=  
\E^\y \left[f \Big( \Lambda \wt \Pi(\x, \bxi)\Big)\right]. 
\] 
Note that since the components of $\bxi$ are independent, 
if $\Pi(\x,\y) = \x$, then $g(\x,\y) = g(\x,\x)$. 
Thus, for a finite $(\fcal_t)_{t\ge 0}$ stopping time $S$ we have
from the strong Markov property of $\bxi$ that
\[
\begin{split}
\E^\x \left[f( \theta_S \Lambda \bxi ) \, \big | \, \fcal_S\right]
& = 
\E^\x  \left[f \Big( \Lambda \wt \Pi(\Lambda \bxi(S), (\theta_S \bxi))\Big)\,  \big | \, \fcal_S \right] \\
& = g(\Lambda \bxi(S), \bxi(S)) \\
& = g(\Lambda \bxi(S), \Lambda \bxi(S)) \\
& = \E^{\Lambda \bxi(S)}[f(\Lambda \bxi)], \\
\end{split}
\]
as required.
\end{proof}

\subsection{Set-valued coalescing process}

Write $\kcal = \kcal(E)$ for the set of nonempty  compact subsets of 
$E$ equipped with the usual Hausdorff metric $d_H$ defined by
\[ 
d_H(K_1, K_2) 
:= 
\inf \{ \eps>0: K_1^\eps \supseteq K_2 \text{ and } K_2^\eps \supseteq K_1 \}, 
\]
where $K^\eps := \{ y \in E: \exists x \in K,  \, d(y, x) < \eps \}$. The metric
space $(\kcal, d_H)$ is complete.  It is compact if $E$ is.

If the locally compact space $E$ is not compact, write $\ccal = \ccal(E)$ for
the set of nonempty closed subsets of $E$.  Identify the elements of $\ccal$
with their closures in the one-point compactification $\bar E$ of $E$. 
Write $d_C$ for the metric on $\ccal$ that arises from
the Hausdorff metric on the compact subsets of $\bar E$ corresponding to some
metric on $\bar E$ that induces the topology of $\bar E$.  

Let $\Xi_t \subseteq E$ denote the closure of the set  $\{\zeta_i(t): i=1, 2, \ldots \}$ in $E$, where $\bzeta = \Lambda \bxi$.

The following result is an almost immediate consequence of 
Lemma~\ref{l:exchangeability}.

\begin{lemma}\label{l:inv_dist}
If $\x', \x'' \in E^N$ are such that the sets
$\{x_i' : i \in [N]\}$ and $\{x_i'' : i \in [N]\}$ are equal,
then the distributions of the process $\Xi$ under $\prob^{\x'}$
and $\prob^{\x''}$ are also equal.
\end{lemma}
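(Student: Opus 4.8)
The plan is to show that the law of the set-valued process $\Xi$ depends on the starting vector $\mathbf{x}\in E^N$ only through the \emph{set} $S := \{x_i : i \in [N]\}$ of distinct starting points, by isolating two features of the construction. First, reordering the coordinates of $\mathbf{x}$ leaves the law of $\Xi$ unchanged. Second, repeating a point does not affect $\Xi$, because two free paths issued from the same point coalesce at time $0$ and thereafter coincide. Granting these, if $\mathbf{x}'$ and $\mathbf{x}''$ share the common value set $S$, then each of the two set-valued processes will agree in law with that of a coalescing system built from exactly one independent copy of $X$ per point of $S$, and the two reference systems differ only by a reordering, whence the claim.

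For the first feature I would argue as follows. Let $\pi:[N]\to[N]$ be a bijection, put $\mathbf{x}^\pi := (x_{\pi(i)})_{i\in[N]}$, and let $\bxi$ be a vector of independent copies of $X$ started from $\mathbf{x}$. Under $\prob^{\mathbf{x}^\pi}$ the free paths have the joint law of $\Sigma_\pi\bxi$, so the coalescing system has the law of $\Lambda\Sigma_\pi\bxi$. Taking $\sigma=\pi$ in the intertwining identity $\Sigma_\pi\Lambda_\sigma = \Lambda_{\sigma\pi^{-1}}\Sigma_\pi$ gives $\Lambda\Sigma_\pi = \Sigma_\pi\Lambda_\pi$, so this law equals that of $\Sigma_\pi\Lambda_\pi\bxi$, whose vector of coordinates is merely a permutation of the coordinates of $\Lambda_\pi\bxi$ and hence has the same closure at each time. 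Since Lemma~\ref{l:exchangeability} gives $\Lambda_\pi\bxi \stackrel{d}{=} \Lambda\bxi$, the process $\Xi$ is unchanged in law when $\mathbf{x}$ is permuted by $\pi$.

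For the second feature, let $R := \{i\in[N] : x_i\neq x_j \text{ for all } j<i\}$ index the first appearances of the distinct points, so that $\{x_i : i\in R\}=S$ without repetition. Whenever $i\notin R$, the free path $\xi_i$ starts at a point already carried by the earlier representative $\zeta_{i_0}$ with $i_0 = \min\{j : x_j=x_i\}\in R$; hence $\tau_i=0$, $J_i=i_0$, and $\zeta_i\equiv\zeta_{i_0}$. Consequently $\{\zeta_i(t):i\in[N]\}=\{\zeta_i(t):i\in R\}$ for every $t$, so $\Xi_t$ is the closure of the latter set. Moreover, since each redundant path $\zeta_j$ ($j\notin R$) coincides at all times with an earlier representative, the set $\{\zeta_1(t),\ldots,\zeta_{i-1}(t)\}$ that appears in the collision rule for a representative $i\in R$ equals $\{\zeta_{i'}(t):i'\in R,\ i'<i\}$, and the minimising index $J_i$ necessarily lies in $R$; thus $(\zeta_i)_{i\in R}$ is exactly the coalescing system obtained by applying $\Lambda$ to the free paths $(\xi_i)_{i\in R}$, which are independent copies of $X$ started from an enumeration of $S$. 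Crucially, this identification is valid whatever the (possibly infinite) multiplicities, which is what lets the argument cover the case $N=\infty$.

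Applying this reduction to both $\mathbf{x}'$ and $\mathbf{x}''$ expresses each of the two set-valued processes, in law, as that of a coalescing system started from one copy of $X$ at each point of $S$; the two enumerations of $S$ arising from $\mathbf{x}'$ and $\mathbf{x}''$ differ by a bijection of the index set, so the permutation-invariance established above equates their laws, as $\Xi$ is a measurable functional of the path system. I expect the main obstacle to be precisely this multiplicity issue: two vectors with the same value set need not be permutations of one another, and when $N=\infty$ their multiplicities may differ on infinitely many coordinates, so that exchangeability alone is insufficient. The crux is therefore the rigorous verification that co-located free particles coalesce instantaneously and that the resulting redundant particles neither contribute to $\Xi$ nor perturb the collision rule governing the representatives. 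The residual measurability of $\mathbf{x}\mapsto(\Xi_t)_{t\ge 0}$ as a $\ccal$- or $\kcal$-valued path I would treat as routine.
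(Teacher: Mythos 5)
Your proof is correct and follows the route the paper itself takes: the paper gives no written proof, saying only that the lemma is ``an almost immediate consequence of Lemma~\ref{l:exchangeability}'', and your argument is exactly that deduction carried out in full. Your two ingredients --- the permutation step via the intertwining $\Sigma_\pi\Lambda_\sigma=\Lambda_{\sigma\pi^{-1}}\Sigma_\pi$ together with Lemma~\ref{l:exchangeability}, and the reduction showing that co-located particles coalesce at time $0$ and can be discarded (needed because equal value sets need not be permutations of one another when multiplicities differ) --- are precisely the details the paper leaves implicit.
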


For the remainder of this section, we will make the following assumption.

\begin{assumption} 
\label{ass:immediate_hit}
The Feller process $X$ is such that
if $X'$ and $X''$ are two independent copies of $X$, then,
for all $t_0 > 0$ and $x' \in E$,
\[ 
\lim_{x'' \to x'} \prob^{x',x''} \big \{ X_t' = X_t'' \text{ for some } t \in [0, t_0] \big \} = 1.
\]
\end{assumption}

\begin{proposition} \label{p:inv_dense_set}
Let $\x', \x'' \in E^N$ be such that the sets
$\{x_i' : i \in [N]\}$ and $\{x_i'' : i \in [N]\}$ have the same
closure. Then, the process $\Xi$ has the same distribution under $\prob^{\x'}$
and $\prob^{{\x}''}$.
\end{proposition}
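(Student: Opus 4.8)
The plan is to prove a sharper statement from which the Proposition follows by elementary set manipulations: \emph{for any countable set $B \subseteq E$ and any point $e \in \overline B \setminus B$, the process $\Xi$ has the same law under $\prob^{B}$ and under $\prob^{B \cup \{e\}}$.} Granting this, I first reduce the Proposition to it. By Lemma~\ref{l:inv_dist} the law under $\prob^{\x}$ depends only on the \emph{set} $\{x_i : i \in [N]\}$, so I may write $\prob^{S}$ for a countable set $S$. Given $\x', \x''$ with $\overline{\{x_i'\}} = \overline{\{x_i''\}} =: \overline A$, put $C := \{x_i'\} \cup \{x_i''\}$, so $\overline C = \overline A$. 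Enumerating $C \setminus \{x_i'\} = \{e_1, e_2, \ldots\} \subseteq \overline A$ and setting $C^{(k)} := \{x_i'\} \cup \{e_1, \ldots, e_k\}$, each $C^{(k)}$ is obtained from $C^{(k-1)}$ by adjoining one point of $\overline{C^{(k-1)}} = \overline A$, so the special statement and induction give $\mathrm{law}(\Xi^{C^{(k)}}) = \mathrm{law}(\Xi^{\x'})$ for all $k$. Since $\Xi^{C^{(k)}}_t \uparrow \Xi^{C}_t$ as $k \to \infty$, this yields $\mathrm{law}(\Xi^{C}) = \mathrm{law}(\Xi^{\x'})$, and symmetrically $\mathrm{law}(\Xi^{C}) = \mathrm{law}(\Xi^{\x''})$.

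Two standard facts underlie the argument. First (the \emph{prefix property} noted just after the definition of $\Lambda$), for a fixed ranking the first $n$ coalescing paths do not depend on the later free paths; thus, if $A_n$ is the set of the first $n$ points of a countable $A$ under a fixed enumeration, then under the natural coupling (common free paths) the occupied sets are nested, $\Xi^{A_n}_t \subseteq \Xi^{A_{n+1}}_t$, with $\overline{\bigcup_n \Xi^{A_n}_t} = \Xi^{A}_t$. Since an increasing sequence of closed subsets of the compact space $\overline E$ converges in the Hausdorff metric to the closure of its union, $\Xi^{A_n}_t \to \Xi^{A}_t$ in $\ccal$ almost surely, hence in law. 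Second, a single tagged coalescing path $\zeta^{A}_{j}$ is marginally a copy of $X$ started from the $j$-th point (this follows from the strong Markov property as in Lemma~\ref{l:strong_markov}), and it is independent of any free path not used in building the $A$-system.

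\emph{Adding one point.} Fix $e \in \overline A \setminus A$, enumerate $A = \{a_1, a_2, \ldots\}$ with ranking $a_1 < a_2 < \cdots$, and realize all systems with one family of independent free paths $\xi_e, \xi_{a_1}, \xi_{a_2}, \ldots$. For each $n$ form the finite system on $A_n \cup \{e\}$ with $e$ ranked \emph{last}: by the prefix property the $A_n$-particles constitute the pure $A_n$-system, while the $e$-particle follows $\xi_e$ until
\[
\tau^{(n)}_e := \inf\{ s \ge 0 : \xi_e(s) \in \Xi^{A_n}_s \},
\]
and coincides with an $A_n$-path afterwards. Hence $\Xi^{A_n \cup \{e\}}_t = \Xi^{A_n}_t$ on $\{\tau^{(n)}_e \le t\}$, and $\Xi^{A_n \cup \{e\}}_t = \Xi^{A_n}_t \cup \{\xi_e(t)\}$ on $\{\tau^{(n)}_e > t\}$. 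The crux is that $\tau^{(n)}_e \to 0$ almost surely. Fix $t_0 > 0$ and choose a subsequence $a_{k_j} \to e$. For $n \ge k_j$ the prefix property gives $\zeta^{A}_{a_{k_j}} = \zeta^{A_n}_{a_{k_j}}$, so a collision of $\xi_e$ with $\zeta^{A}_{a_{k_j}}$ before time $t_0$ forces $\tau^{(n)}_e \le t_0$ for all $n \ge k_j$. Now $\zeta^{A}_{a_{k_j}}$ is a copy of $X$ from $a_{k_j}$ independent of $\xi_e$, so Assumption~\ref{ass:immediate_hit} gives that $\xi_e$ and $\zeta^{A}_{a_{k_j}}$ collide by time $t_0$ with probability tending to $1$ as $j \to \infty$. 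Hence almost surely $\xi_e$ collides with some $\zeta^{A}_{a_{k_j}}$ by $t_0$, so $\lim_n \tau^{(n)}_e \le t_0$; letting $t_0 \downarrow 0$ yields $\lim_n \tau^{(n)}_e = 0$ almost surely.

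\emph{Conclusion.} Because $\prob\{\tau^{(n)}_e > t\} \to 0$ and $\Xi^{A_n}_t \to \Xi^{A}_t$ almost surely, the displayed description forces $\Xi^{A_n \cup \{e\}}_t \to \Xi^{A}_t$ in probability, whence $\mathrm{law}(\Xi^{A_n \cup \{e\}}_t) \to \mathrm{law}(\Xi^{A}_t)$. On the other hand, ranking the combined set as $e < a_1 < a_2 < \cdots$ makes $A_n \cup \{e\}$ the prefixes of $A \cup \{e\}$, so by the first standard fact $\Xi^{A_n \cup \{e\}}_t \to \Xi^{A \cup \{e\}}_t$ almost surely, hence $\mathrm{law}(\Xi^{A_n \cup \{e\}}_t) \to \mathrm{law}(\Xi^{A \cup \{e\}}_t)$. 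Since $\mathrm{law}(\Xi^{A_n \cup \{e\}}_t)$ is independent of the ranking used to compute it (Lemma~\ref{l:exchangeability}), these two weak limits of the same sequence of laws coincide, giving $\mathrm{law}(\Xi^{A}_t) = \mathrm{law}(\Xi^{A \cup \{e\}}_t)$. Running the comparison jointly at finitely many times $t_1 < \cdots < t_m$ (the event $\{\tau^{(n)}_e > t_1\}$ still having vanishing probability) upgrades this to equality of finite-dimensional distributions, hence of the laws of the $\ccal$-valued processes. The hard part will be exactly the claim $\tau^{(n)}_e \to 0$: this is where Assumption~\ref{ass:immediate_hit} is indispensable, and making it rigorous requires combining it with the two auxiliary facts — that a tagged coalescing path is a genuine copy of $X$ independent of $\xi_e$, and that the prefix property transfers a collision in the infinite $A$-system down to every sufficiently long truncation.
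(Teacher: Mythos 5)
Your ``adding one point'' lemma is correct, and its proof is a genuinely different route from the paper's: the paper never tags particles, but instead pairs the first $n$ points of $\x'$ with nearby points of $\x''$, uses Assumption~\ref{ass:immediate_hit} to merge the pairs by a small time $t_0$, and then compares the two systems through the monotone functionals $\phi_z(K) = \inf\{d(z,w) : w \in K\}$ and Stone--Weierstrass. Your argument --- that a tagged coalescing path $\zeta^{A}_{a_{k_j}}$ is marginally a copy of $X$ independent of $\xi_e$, so Assumption~\ref{ass:immediate_hit} forces $\tau^{(n)}_e \downarrow 0$ a.s., after which the two rankings of $A_n \cup \{e\}$ (with $e$ last, resp.\ first) identify the two weak limits via Lemma~\ref{l:exchangeability} --- is sound as far as it goes.

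The genuine gap is in the reduction, at the step ``$\Xi^{C^{(k)}}_t \uparrow \Xi^{C}_t$ as $k \to \infty$.'' The nesting you invoke (your ``prefix property'') is available only for \emph{finite initial segments} of a fixed ranking: the paper's collision rule is defined for bijections $\sigma : [N] \to [N]$, i.e.\ rankings of order type $\omega$, and the first $n$ coalescing paths ignore the later free paths. Your sets $C^{(k)} = \{x_i'\} \cup \{e_1,\ldots,e_k\}$ are \emph{infinite} with $C \setminus C^{(k)} \neq \emptyset$, so no admissible ranking of $C$ makes $C^{(k)}$ an initial segment; one would need a transfinite ranking placing all of $C^{(k)}$ before $e_{k+1}, e_{k+2},\ldots$, which the construction does not provide. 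Nor does some other coupling obviously work: pathwise inclusion $\Xi^{B}_t \subseteq \Xi^{C}_t$ for $B \subseteq C$ is generally \emph{false} when a point of $C \setminus B$ is ranked below infinitely many points of $B$, since a $B$-particle may be absorbed by that extra particle's path and thereafter occupy a site outside the pure $B$-system. Consequently the passage from ``$\mathrm{law}(\Xi^{C^{(k)}}) = \mathrm{law}(\Xi^{\x'})$ for every finite $k$'' to ``$\mathrm{law}(\Xi^{C}) = \mathrm{law}(\Xi^{\x'})$'' is exactly the kind of closure-insensitivity statement being proved, and is unjustified as written. The gap can be repaired, but not for free: for instance, compare through the finite prefixes $P_{2j} = \{x_1', e_1, \ldots, x_j', e_j\}$, which do satisfy $\Xi^{P_{2j}}_t \subseteq \Xi^{C^{(j)}}_t$ pathwise (rank $C^{(j)}$ so that $P_{2j}$ comes first) and $\Xi^{P_{2j}}_t \uparrow \Xi^{C}_t$, and then convert the two one-sided inclusions ($A_n \subseteq C$ and $P_{2j} \subseteq C^{(j)}$) into equality of laws using the decreasing functionals $\phi_z$ and Stone--Weierstrass --- which is precisely the machinery of the paper's own proof that your write-up set out to avoid.
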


\begin{proof}
We will consider the case where $E$
is compact.  The non-compact case is essentially the same, and we leave
the details to the reader.

We need to show for any finite set of
times $0 < t_1 < \ldots < t_k$ that the distribution
of $(\Xi_{t_1}, \ldots, \Xi_{t_k})$ is the same under $\prob^{\x'}$
and $\prob^{x''}$.  

We may suppose without loss of generality that 
$x_1', x_2', \ldots$ (resp. $x_1'', x_2'', \ldots$) are distinct.  

Fix $n \in [N]$ and $\delta>0$.
Given $\eps > 0$ that will be specified later, choose
$y_1'', y_2'', \ldots, y_n'' \in \{x_i'' : i \in [N]\}$ 
such that $d(x_i', y_i'') \le \eps$ for $1 \le i \le n$. 
Let $\bfeta'$ (resp. $\bfeta''$) be an $E^n$-valued process with
coordinates that are independent copies of $X$ started at
$(x_1', \ldots, x_n')$ (resp. $(y_1'', y_2'', \ldots, y_n'')$).

By the Feller property, there is a time
$0 < t_0 \le t_1$ that depends on
$x_1', \ldots, x_n'$
such that for all $\eps$ sufficiently small 
\[
\prob\{\eta_i''(t) = \eta_j''(t) \text{ for some $1 \le i \ne j \le n$ and $0 < t \le t_0$}\} \le \frac{\delta}{2}.
\]
By our standing Assumption~\ref{ass:immediate_hit},
if we take $\eps$ sufficiently small, then
\[
\prob\{\eta_i'(t) \ne \eta_i''(t) \text{ for all  $0 < t \le t_0$}\} \le \frac{\delta}{2n}, \quad 1 \le i \le n.
\]

Write $\Xi'$ (resp. $\Xi''$, $\hat \Xi$, $\check \Xi$) for the set-valued processes
constructed from $\bfeta'$ (resp. $\bfeta''$, $(\bfeta',\bfeta'')$, $(\bfeta'',\bfeta')$)
in the same manner that $\Xi$ is constructed from $\bxi$.  We have
\[
\prob\{\check \Xi_t = \Xi_t'' \text{ for all $t \ge t_0$}\} \ge 1 - \delta,
\]
\[ 
\Xi_t' \subseteq \hat \Xi_t, \quad \text{for all $t \ge 0$},
\]
and, by Lemma~\ref{l:inv_dist},
\[
\hat \Xi
\stackrel{d}{=} 
\check \Xi.
\]

For each $z \in E$, define a continuous function
$\phi_z : \kcal \to \mathbb{R}_+$ by
\[
\phi_z(K) := \inf\{d(z,w) : w \in K\}.
\]
Note that $K' \subseteq K''$ implies that $\phi_z(K') \ge \phi_z(K'')$ for any $z \in E$.
It follows that for points $z_{\ell p} \in E$, $1 \le p \le q_\ell$,
$1 \le \ell \le k$,
\[
\begin{split}
\E
\left[
\prod_{\ell=1}^k \prod_{p=1}^{q_\ell}
\phi_{z_{\ell p}}(\Xi_{t_\ell}')
\right]
& \ge
\E
\left[
\prod_{\ell=1}^k \prod_{p=1}^{q_\ell}
\phi_{z_{\ell p}}(\hat \Xi_{t_\ell})
\right] \\
& =
\E
\left[
\prod_{\ell=1}^k \prod_{p=1}^{q_\ell}
\phi_{z_{\ell p}}(\check \Xi_{t_\ell})
\right] \\
& \ge
\E
\left[
\prod_{\ell=1}^k \prod_{p=1}^{q_\ell}
\phi_{z_{\ell p}}(\Xi_{t_\ell}'')
\right]
-
\delta \left(\sup\{d(z,w) : z,w \in E\}\right)^{\sum_\ell q_\ell} \\
\end{split}
\]
Observe that
\[
\E^{\x'}
\left[
\prod_{\ell=1}^k \prod_{p=1}^{q_\ell}
\phi_{z_{\ell p}}(\Xi_{t_\ell})
\right]
=
\lim_{n \to \infty}
\E^{\x'}
\left[
\prod_{\ell=1}^k \prod_{p=1}^{q_\ell}
\phi_{z_{\ell p}}(\Xi_{t_\ell}')
\right]
\]
and
\[
\E
\left[
\prod_{\ell=1}^k \prod_{p=1}^{q_\ell}
\phi_{z_{\ell p}}(\Xi_{t_\ell}'')
\right]
\ge
\E^{\x''}
\left[
\prod_{\ell=1}^k \prod_{p=1}^{q_\ell}
\phi_{z_{\ell p}}(\Xi_{t_\ell})
\right].
\]
Since $\delta$ is arbitrary, 
\[
\E^{\x'}
\left[
\prod_{\ell=1}^k \prod_{p=1}^{q_\ell}
\phi_{z_{\ell p}}(\Xi_{t_\ell})
\right]
\ge
\E^{\x''}
\left[
\prod_{\ell=1}^k \prod_{p=1}^{q_\ell}
\phi_{z_{\ell p}}(\Xi_{t_\ell})
\right].
\]
Moreover, we see from interchanging the roles of $\x'$
and $\x''$ that the last inequality is actually an equality.

It remains to observe from the Stone-Weierstrass theorem that
the algebra of continuous functions generated by the constants
and the set $\{\phi_z : z \in E\}$ is uniformly dense in the
space of continuous functions on $E$.
\end{proof}

With Proposition~\ref{p:inv_dense_set} in hand, it makes
sense to talk about the distribution of the process $\Xi$
for a given initial state $\Xi_0$.
The following result follows immediately from Dynkin's criterion
for a function of Markov process to be also Markov.

\begin{corollary}
The process $(\Xi_t)_{t \ge 0} $ is strong Markov with
respect to the filtration $(\fcal_t)_{t\ge 0}$.
\end{corollary}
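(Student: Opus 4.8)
The plan is to exhibit $\Xi$ as a measurable image of the vector-valued strong Markov process $\bzeta = \Lambda\bxi$ and then to apply Dynkin's criterion, with Proposition~\ref{p:inv_dense_set} furnishing the single substantive hypothesis.

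First I would introduce the map $\phi : E^N \to \ccal$ (or $\phi : E^N \to \kcal$ when $E$ is compact) given by $\phi(\x) := \overline{\{x_i : i \in [N]\}}$, the closure of the set of coordinate values. By the definition of $\Xi$ one has $\Xi_t = \phi(\bzeta_t)$ for every $t \ge 0$, so that $\Xi$ is an $(\fcal_t)$-adapted $\ccal$-valued process. A routine verification, which I would record as a brief aside, shows that $\phi$ is measurable from $E^N$ into $(\ccal, d_C)$ (resp.\ $(\kcal, d_H)$); this is mere bookkeeping.

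Next I would recall Dynkin's criterion in the form suited to strong Markov processes: if $(\bzeta_t)$ is strong Markov with respect to $(\fcal_t)_{t \ge 0}$ --- which it is, by Lemma~\ref{l:strong_markov} --- and if $\phi$ satisfies the compatibility condition that for every $t \ge 0$ and every bounded measurable $g : \ccal \to \real$ the map
\[
\x \longmapsto \E^{\x}\!\left[ g\big(\phi(\bzeta_t)\big) \right] = \E^{\x}\!\left[ g(\Xi_t) \right]
\]
depends on $\x$ only through $\phi(\x)$, then $\phi(\bzeta) = \Xi$ is again strong Markov with respect to $(\fcal_t)_{t \ge 0}$.

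It therefore remains only to check the compatibility condition, and this is exactly the content of Proposition~\ref{p:inv_dense_set}. Indeed, whenever $\x', \x'' \in E^N$ satisfy $\phi(\x') = \phi(\x'')$ --- that is, their coordinate sets share a common closure --- the proposition asserts that $\Xi$ has the same law under $\prob^{\x'}$ and $\prob^{\x''}$; in particular the one-dimensional marginals agree, so $\E^{\x'}[g(\Xi_t)] = \E^{\x''}[g(\Xi_t)]$, which is precisely the required factorisation through $\phi$. I expect the only (mild) obstacle to be a careful statement of the version of Dynkin's criterion being used --- single-time compatibility suffices to upgrade the strong Markov property of $\bzeta$ to that of $\Xi$ --- together with the measurability of $\phi$; the genuine substance of the corollary resides entirely in Proposition~\ref{p:inv_dense_set}, which has already been established.
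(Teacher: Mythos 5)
Your proposal is correct and follows essentially the same route as the paper: the paper likewise obtains the corollary as an immediate consequence of Dynkin's criterion for a function of a Markov process to be Markov, with Lemma~\ref{l:strong_markov} supplying the strong Markov property of $\bzeta$ and Proposition~\ref{p:inv_dense_set} supplying the compatibility (the law of $\Xi$ depends on the starting vector only through the closure of its coordinate set). Your write-up merely makes explicit the map $\phi$ and the single-time compatibility check that the paper leaves implicit.
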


\subsection{Coalescing marked particles}\label{subsec:coupling}

Starting with the Feller Markov process $X$ on $E$, we can take
another locally compact, second-countable, Hausdorff
{\em mark} space $M$ and build a Feller Markov process $\hat X$
with state space $\hat E = E \times M$ by taking the distribution
of $(\hat X_t)_{t \ge 0}$ when $\hat X_0 = (x,m)$ to be that
of $((X_t,m))_{t \ge 0}$ when $X_0 = m$.  That is,
the $E$-valued component of $\hat X$ evolves in the same manner as $X$,
while the $M$-valued component stays at its initial value.  

Given a ranking $\sigma$ of $[N]$, we can define a collision rule
$\hat \Lambda_\sigma$ for $\hat E$-valued paths in the same
way that we defined the collision rule 
$\Lambda_\sigma$ for $\E$-valued paths.  Note that if 
$\bxi = (\xi_1, \xi_2, \ldots)$ is a vector of $E$-valued paths
and we define a vector 
$\hat \bxi = (\hat \xi_1, \hat \xi_2, \ldots)$ of 
$\hat E$-valued paths by $\hat \xi_i(t) = (\xi_i(t),m_i)$ for
$m_1, m_2, \ldots \in M$, then it is {\bf not} the case that
vector of $E$-valued components of 
$\hat \bzeta := \hat \Lambda_\sigma \hat \bxi$
is always equal to $\bzeta := \Lambda_\sigma \bxi$: in order for the
$E$-valued components of two particles to coalesce from some time
onwards, the corresponding unchanging $M$-valued marks have to agree.



Because particles can coalesce in the $\bzeta$ system that are unable to
coalesce in the $\hat \bzeta$ system, it might seem at first glance
that for $N=n$ we have
\[ 
\{\zeta_i(t): 1 \le i \le n \} 
\subseteq 
\{z_i : \hat \zeta_i(t) = (z_i, m_i), \,  1 \le i \le n \}
\]
for all $t \ge 0$.  However, it is not too difficult to construct
examples where preventing particles from coalescing at an early
stage of the evolution leaves several particles around at a later
stage in the correct locations and with the correct marks 
to lead to an excess of coalescences over what occurs in the
unmarked system.  Nonetheless, an ordering of this sort
holds in the sense of stochastic domination rather than pointwise.
More precisely, the following claim holds.

\bigskip
\noindent
\textbf{Claim.} 
Given $\bxi = (\xi_i)_{i=1}^n$ and marks $(m_i)_{i=1}^n$, we can
construct $\tilde \bzeta = (\tilde \bzeta_i)_{i=1}^n$ that has
the same distribution as $\hat \bzeta$ and is such that, almost surely,
\[ \{\zeta_i(t): 1 \le i \le n \} 
\subseteq 
\{z_i : \tilde \zeta_i(t) = (z_i, m_i), \,  1 \le i \le n \}\]
 for all $t \ge 0$.



Before we present the formal construction of $\tilde \bzeta$, we give the
following verbal description which may help the reader.  We have hitherto
defined a ranking for an $n$-particle system  to be a bijection from
$[n]$ to $[n]$, but it will be convenient to modify this definition
and now take a ranking of $n$ particles to be an injection from $[n]$
to $\mathbb{N}$ (the previous definition can be thought of as
the special case of this one where the image of the injection is $[n]$).

\begin{itemize}
\item[(i)]
Imagine that at any given time each particle can be one of three types:  
{\em active}, {\em injured}, or {\em dead}. 
\begin{itemize}
\item[(a)]
All particles are initially active. 
\item[(b)]
An active particle can remain active or become either injured or dead.
\item[(c)]  
An injured particle can remain injured or become dead. 
\item[(d)] A dead particle remains dead.
\end{itemize}
\item[(ii)] Suppose that an active particle  collides with another active particle. 
\begin{itemize} 
\item[(a)]
The particle with smaller rank (at the time of the collision) remains active. 
\item[(b)] 
If the two colliding particles have same mark, then  the particle with the higher rank becomes dead and follows the path of the other particle thereafter. There is no change in the rankings of any particle. 
\item[(c)]
If the colliding particles have different marks,  then the particle with the higher rank becomes injured.  The higher rank particle continues to follow its own path.  Its ranking and the rankings of all the particles that have already coalesced with it are increased by $n$.  
The rankings of all other particles remain unchanged.
\end{itemize} 
\item[(iii)] 
Suppose that an injured particle collides with an active particle  with the same mark. Then, the injured particle becomes dead and follows the path of the active particle thereafter.  The rankings of all particles are unchanged.
\item[(iv)]
Suppose that two injured particles sharing the same mark collide.   Then,   the particle with the higher rank becomes dead and follows the path of the particle with lower rank thereafter.  The rankings of all particles are unchanged.
\item[(v)] If there is a collision between any pair of particles not described above,  then both of the colliding  particles continue to
follow their own paths and there is no change in the ranking.
 \end{itemize}

We now give a more formal description of the above construction.  
Let $\upsilon_0 = 0 < \upsilon_1< \upsilon_2< \cdots$ be the successive collision times for the process $\bzeta = \Lambda_\sigma \bxi$, that is,
\begin{equation}\label{eq:succ_collision}
\upsilon_{i+1} := \inf\{t > \upsilon_i: \zeta_j(t) = \zeta_k(t), \, \zeta_j(\upsilon_i) \ne \zeta_k(\upsilon_i) \}.
\end{equation}
To build our coalescing system, we first define 
an $(\fcal_t)_{t \ge0} $-adapted ranking-valued process 
$(\sigma_t)_{t\ge 0}$ which starts from $\sigma_0$ at time $t=0$
and is constant on each interval $[ \upsilon_i, \upsilon_{i+1})$. 
For $i \ge 1$ and $k \in [n]$, set
 \[ \sigma_{\upsilon_i}(k) : = 
 \begin{cases}
  \sigma_{\upsilon_i-}(k)+n, & \text{ if } \exists j \in [n], \text{ such that } \zeta_j(t) = \zeta_k(t), \\
  & \quad  \zeta_j(\upsilon_{i-1}) \ne \zeta_k(\upsilon_{i-1}), \, m_j \ne m_k, \, \sigma(j) < \sigma(k),\\
  \sigma_{\upsilon_i-}(k), & \text{ otherwise.} 
 \end{cases}
  \]
For $\upsilon_{i} \le  t < \upsilon_{i+1}$, set
\[ 
\tilde \bzeta(t) := \hat \Lambda_{\sigma_{\upsilon_i}} \Big(\hat \theta_{\upsilon_i - \upsilon_{i-1}} \circ \hat \Lambda_{\sigma_{\upsilon_{i-1}} } \ldots \big(\hat \theta_{\upsilon_2- \upsilon_1} \circ \hat \Lambda_{\sigma_{\upsilon_1}} ( \hat \theta_{\upsilon_1- \upsilon_0} \circ \hat \Lambda_{\sigma_{\upsilon_0}} \hat \bxi) \big) \Big) (t-\upsilon_i),  \]
where the collision operators $\hat \Lambda_\sigma$ are defined as before
(the definition continues to make sense with our more general
notion of ranking) 
and $(\hat \theta_t)_{t \ge 0}$ is the family of shift operators on the space of $\hat E$-valued paths. 

The following observations prove the claim.
 \begin{enumerate}
 \item The rank of an injured particle is always higher than that of an active particle.
 
 \item During the evolution of the process, the relative ranking of the active particles is unchanged. Thus, the set of $E$-valued components
 of the locations of the active particles present at time $t$ evolves as the set-valued coalescing  process corresponding to 
 $\bzeta = \Lambda_{\sigma_0} \bxi$.

 \item The vector-valued process $\tilde \bzeta$ has the same distribution
 as the coalescing process $\hat \bzeta$. 
Indeed, if we define 
the successive collision times $\hat \upsilon_i$, $i \ge 0$,  
for the process $\hat \bzeta$ by analogy with \eqref{eq:succ_collision}, then 
it follows by induction and 
the strong Markov property of  the process $\hat \bxi $ with respect to filtration $(\fcal_t)_{t \ge 0}$
that the distribution of the process 
$( \hat \upsilon_i \wedge t, \hat \bzeta_{ \hat \upsilon_i \wedge t })_{ t \ge 0}$ does not depend on the  ranking process $(\sigma_{\hat \upsilon_i \wedge t} )_{ t \ge 0}$ when $(\sigma_t)_{ t \ge 0} $ is $(\fcal_t)_{ t \ge 0}$-adapted.

 \end{enumerate}

\section{Processes on compact spaces}
\label{S:compact_space}

The conditions of the following theorem are shown in \cite{MR2008600}
to hold for symmetric processes with suitable Dirichlet forms
on $d$-sets in $\mathbb{R}^n$, $0 < d \le n$.  They certainly hold
for the symmetric stable processes on the circle, with $d=1$
and $1 < \alpha < 2$ the stable index.  The latter
processes are, in any case, instances
of the processes considered in \cite{MR2008600}, where other examples
such as stable subordinations of suitable diffusions on fractals
are also discussed.

\begin{theorem}[Instantaneous Coalescence]\label{thm_main1_d-set}
Suppose that $(E,r)$ is a compact metric space equipped with a Borel
probability measure $\mu$ such that
\[
C_1 \epsilon^d \le \mu(B(x,\epsilon)) \le C_2 \epsilon^d, \quad x \in E, \, 
0 < \epsilon \le 1,
\]
for constants $0 < C_1 < C_2$, 
where $B(x,\epsilon)$ is the open ball of radius $\epsilon$ centered at $x$.
Consider a Feller Markov process $X$
with state space $E$ that has jointly continuous transition densities
$(t,x,y) \mapsto p(t,x,y)$ with respect to $\mu$.  Assume
that $X$ is symmetric with respect to $\mu$, so that
$p(t,x,y) = p(t,y,x)$.  Assume further that for some $\alpha > d$
we have bounds of the form
\[
c_1 
\left\{
t^{- d/\alpha}
\wedge
\frac{t}
{r(x, y)^{d+\alpha}}
\right\}
\le
p(t,x,y)
\le
c_2 
\left\{
t^{- d/\alpha}
\wedge
\frac{t}
{r(x, y)^{d+\alpha}}
\right\}, \quad 0 < t \le 1,
\]
for suitable constants $0 < c_1 < c_2$. 
Let $\Xi$ be the corresponding set-valued coalescing system. 
Then, almost surely, $\Xi_t$ is a finite set for all $t>0$.
\end{theorem}

\begin{proof}
We will verify the bounds 
\eqref{hitting_lower_bound} and \eqref{Lebesgue_number},
so that we can apply the argument in the Introduction.

Let $X'$ and $X''$ be two independent copies of $X$
started from $x'$ and $x''$, respectively.  We want a lower
bound on the probability 
\[
\mathbb{P}\{\exists 0 \le s \le t : X_s' = X_s''\}.
\]
To this end, set 
$W_\epsilon := \int_0^t \mathbf{1}\{r(X_s',X_s'') \le \epsilon\} \, ds$
and note by the Cauchy--Schwarz inequality that
\begin{equation}
\label{second_moment_method}
\begin{split}
& \mathbb{P}\{\exists 0 \le s \le t : X_s' = X_s''\}
= \lim_{\epsilon \downarrow 0} \mathbb{P}\{W_\epsilon > 0\} 
\ge
\liminf_{\epsilon \downarrow 0}
\frac{\mathbb{E}[W_\epsilon]^2}
{\mathbb{E}[W_\epsilon^2]} \\
& \quad =
\frac
{\left[\int_0^t \int_E p(s,x',y) p(s,x'',y) \, \mu(dy) \, ds\right]^2}
{2 \int_0^t \int_s^t \int_E \int_E
p(s,x',y)p(s,x'',y) p(u-s,y,z) p(u-s,y,z) \, \mu(dy) \, \mu(dz) \, du \, ds} \\
& \quad =
\frac
{\left[\int_0^t  p(2s,x',x'')  \, ds\right]^2}
{2 \int_0^t \int_s^t \int_E
p(s,x',y)p(s,x'',y) p(2(u-s),y,y)  \, \mu(dy)  \, du \, ds}. \\
\end{split}
\end{equation}

For $t = \frac{1}{2} r(x', x'')^\alpha$,
the numerator in \eqref{second_moment_method} is bounded below by
\[
\begin{split}
& \left[
c_1 \frac{1}{2} \int_0^{r(x', x'')^\alpha}
v^{-d/\alpha}
\wedge
\frac{v}
{r(x', x'')^{d+\alpha}} \, dv
\right]^2 \\
& \quad =
\frac{c_1^2}{4}
\left[
\int_0^{r(x', x'')^\alpha}
\frac{v}{r(x', x'')^{d+\alpha}} \, dv
\right]^2 \\
& \quad \ge
c_3 r(x', x'')^{2(\alpha -d )} \\
\end{split}
\]
for a suitable constant $c_3$.
For the same value of $t$, the denominator is bounded above by
\[
\begin{split}
& 2 c_2 \int_0^{r(x', x'')^\alpha/2} \int_s^{r(x', x'')^\alpha/2} \int_E
p(s,x',y)p(s,x'',y) (2(u-s))^{-d/\alpha}  \, \mu(dy)  \, du \, ds \\
& \quad =
c_4 \int_0^{r(x', x'')^\alpha/2}
p(2s,x',x'') (r(x', x'')^\alpha/2 - s)^{1 - d/\alpha} \, ds \\
& \quad \le
c_5
\left[
\int_0^{r(x', x'')^\alpha} 
\frac{v}{r(x', x'')^{d+\alpha}} (r(x', x'')^\alpha/2 - v/2)^{1 - d/\alpha} \, dv
\right] \\
& \quad \le
c_6 r(x', x'')^{2(\alpha -d )} \\
\end{split}
\]
for suitable constants $c_4, c_5, c_6$.  Thus,
\[
\mathbb{P}\left\{\exists 0 \le s \le \frac{1}{2} r(x', x'')^\alpha : X_s' = X_s''\right\}
\ge
p := \frac{c_3}{c_6} > 0
\]
and \eqref{hitting_lower_bound} holds.

Turning to \eqref{Lebesgue_number},
note that if $n$ points of $E$ are such that each point is distance at least
$\epsilon$ from any other, then $n C_1 (\frac{\epsilon}{2})^d \le \mu(E) = 1$.
Hence, in any set with more than $n$ points there must
be at least two points
at distance at most $2 C_1^{-1/d} n^{-1/d}$ apart.

We can therefore apply the argument in the Introduction with 
$\kappa = \frac{1}{d}$, because $\alpha \kappa = \frac{\alpha}{d} > 1$
by assumption.  However, there is one small technical point
that needs to be taken care of.  The construction of the
set-valued coalescing process $\Xi$ was carried out under the assumption
that Assumption~\ref{ass:immediate_hit} holds, and we need to verify that
this is the case.
It follows from the continuity of the transition densities 
and the Markov property that
$\mathbb{P}\{\exists \delta \le s \le t : X_s' = X_s''\}$ is jointly
continuous in the starting points $x'$ and $x''$ for $0 < \delta \le t$.
By the Blumenthal zero-one law it
 therefore suffices to show for $x' = x'' = x$ that
\begin{equation}
\label{input_to_Blumenthal}
0
<
\inf_{t > 0} \mathbb{P}\{\exists 0 < s \le t : X_s' = X_s''\}
= 
\inf_{t>0} \lim_{\delta \downarrow 0} 
\mathbb{P}\{\exists \delta \le s \le t : X_s' = X_s''\}.
\end{equation}
The argument that led to \eqref{second_moment_method} shows
the limit in rightmost term of \eqref{input_to_Blumenthal} is bounded below by
\[
\begin{split}
& \frac
{\left[\int_0^t \int_E p(s,x,y) p(s,x,y) \, \mu(dy) \, ds\right]^2}
{2 \int_0^t \int_s^t \int_E \int_E
p(s,x,y)p(s,x,y) p(u-s,y,z) p(u-s,y,z) \, \mu(dy) \, \mu(dz) \, du \, ds} \\
& \quad =
\frac
{\left[\int_0^t  p(2 s, x, x)  \, ds\right]^2}
{2 \int_0^t \int_s^t \int_E 
p(s,x,y)p(s,x,y) p(2(u-s),y,y) \, \mu(dy)  \, du \, ds}. \\
\end{split}
\]
For small $t > 0$, the numerator is bounded below by
\[
\left[c_1 2^{d/\alpha} \int_0^t s^{-d/\alpha} \, ds \right]^2 = c_7 t^{2(1 - d/\alpha)}
\]
for a suitable (positive) constant $c_7$.  
Similarly, the denominator is bounded above by
\[
\begin{split}
c_8 \int_0^t \int_s^t s^{-d/\alpha} (u-s)^{-d/\alpha} \, du \, ds
& =
c_8  \frac{4^{d/\alpha-1} \sqrt{\pi } \Gamma
   (1-d/\alpha) t^{2(1-d/\alpha)} }{(1-d/\alpha) \Gamma \left(\frac{3}{2}-d/\alpha\right)} \\
& =
c_9 t^{2(1 - d/\alpha)} \\
\end{split}
\]
for suitable (finite) constants $c_8$ and $c_9$.  Therefore, the rightmost term
of \eqref{input_to_Blumenthal} is bounded below by $c_7/c_9 > 0$, as required.
\end{proof}

\section{Brownian motion on the Sierpinski gasket}
\label{S:gasket}

\subsection{Definition and properties of the gasket}

Let 
\[
G_0 := \{ (0,0), (1, 0), (1/2, \sqrt3 /2) \}
\]
be the vertices of the unit triangle in $\mathbb R^2$ and denote by $H_0$ the closed convex hull of $G_0$. The {\em Sierpinski gasket}, which we also call the {\em finite gasket}, is
a fractal subset of the plane that can be constructed via the following Cantor-like cut-out procedure. Let $\{ b_0, b_1, b_2\}$ be the midpoints of three sides of $H_0$ and let $A$ be the interior of the triangle with vertices $\{b_0, b_1, b_2\}$. Define $H_1 := H_0 \setminus A$ so that $H_1$ is the union of $3$ closed upward facing triangles of side length $2^{-1}$. Now repeat this operation on each of the smaller triangles to obtain a set $H_2$, consisting of $9$ upward facing closed triangles, each of side $2^{-2}$. Continuing this fashion, we have a decreasing sequence of closed non-empty sets $\{H_n\}_{n=0}^\infty$ and we define the Sierpinski gasket as 
\[ G := \bigcap_{n=0}^\infty H_n.\]

We call each of the $3^n$ triangles of side $2^{-n}$ that make up $H_n$ an {\em $n$-triangle} of $G$.  
Denote by $\tcal_n$ the collection of all $n$-triangles of $G$.
 Let $\vcal_n$ be the set of vertices of the $n$-triangles.

We call the unbounded set  
\[ \wt G := \bigcup_{n=0}^\infty 2^n G \]
the {\em infinite gasket} (where, as usual, we write
$c B := \{c x : x \in B\}$ for $c  \in \mathbb{R}$ and $B \subseteq \mathbb{R}^2$) . 
The concept of $n$-triangle, where
$n$ may now be a negative integer, extends in
the obvious way to the infinite gasket. Denote the set of all $n$-triangles of $\wt G$ by $\wt \tcal_n$. Let $\wt \vcal_n$ be the vertices of $\wt \tcal_n$.

Given a pathwise connected subset 
$A \in \mathbb R^2$, let $\rho_A$ be the {\em shortest-path metric} on $A$
given by
\[ \rho_A(x, y) := \inf\{|\gamma| : \gamma \text{ is a path between } x \text{ and } y \text{ and } \gamma \subseteq A \},\]
where $|\gamma|$ denote the length (that is, the $1$-dimensional
Hausdorff measure) of $\gamma$.
For the finite gasket $G$, $\rho_G$ is comparable to the usual Euclidean metric $| \cdot |$ (see, for example, \cite[Lemma 2.12]{barlow95}) with the relation, 
\[ |x- y| \le \rho_G(x,y) \le c|x-y|, \quad \forall x, y \in G,\]
for a suitable constant $1 < c < \infty$.
It is obvious that the same is also true for the metric 
$\rho_{\wt G}$ on the infinite gasket.

Let $\mu$ denote the $d_f$-dimensional Hausdorff measure
on $\wt G$ where $d_f := \log 3/ \log 2$
is the {\em fractal} or {\em mass dimension} of the gasket.
For the finite gasket $G$ we have $0 < \mu(G) < \infty$
and, with a slight abuse of notation,
we will also use the notation $\mu$ to denote
the restriction of this measure to $G$.
Moreover, we have the following estimate on the volume growth of  $\mu$
\begin{equation} \label{eq:volgrowth}
C' r^{d_f} \le   \mu ( B(x, r) ) \le C r^{d_f} \quad \text{for } x \in \wt G, \; 0 < r < 1,
\end{equation}
where $B(x,r) \subseteq \wt G$ is the open ball with center $x$ and radius $r$ 
in the Euclidean metric and $C, C'$  are suitable
constants (see \cite{barlow88}).

\subsection{Brownian motions}

We  construct a graph $G_n$ (respectively,
$\wt G_n$) embedded in the plane 
with vertices $\vcal_n$ (resp. $\wt \vcal_n$)
by adding edges between pairs of vertices that are 
distance $2^{-n}$ apart from each other. Let $X^n$ (resp. $\wt X^n$) be the natural random walk on $G_n$ (resp. $\wt G_n$);
that is, the discrete time Markov chain that at each step
chooses uniformly at random from one of the neighbors
of the current state. It is known (see \cite{barlow88, barlow95}) that  
the sequence $(X^n_{\lfloor 5^nt \rfloor})_{ t \ge 0}$ (resp. $(\wt X^n_{\lfloor 5^nt \rfloor})_{t \ge 0}$) converges in distribution as $n \rightarrow \infty$
 to a limiting process $(X_t)_{t \ge 0}$ (resp. $(\wt X_t)_{t \ge 0}$) that is a $G$-valued (resp. $\wt G$-valued) strong Markov process (indeed, a Feller process)
 with continuous sample paths. The  processes $X$ and $\wt X$ are called, for obvious reasons,  the Brownian motion  on the finite and infinite gaskets, respectively. The Brownian motion on the {\em infinite} gasket has the following scaling property:
\begin{equation}\label{e:bmscaling}
\text{
$(2 \wt X_t)_{t \ge 0}$ 
under  $\prob^x$ has same law as 
$(\wt X_{5t})_{t \ge 0}$
under
$\prob^{2x}$.} 
\end{equation}

The process  $\wt X$ has a family $\wt p(t, x, y$), $x, y \in \G$, $t > 0$, of transition densities
with respect to  the measure $\mu$ that is jointly continuous on $(0, \infty) \times \wt G \times \wt G$. Moreover, 
$\wt p(t, x, y) = \wt p(t, y, x)$ for all $x, y \in \wt G$
and $t> 0$, so that the process $\wt X$ is symmetric with respect to $\mu$.

Let $ d_w  := \log 5 / \log 2$ denote the {\em walk dimension}
of the gasket.  The following crucial ``heat kernel bound'' is established 
in \cite{barlow88}
\begin{equation} 
\label{eq:densitybound}
\begin{split}
& c'_1 t^{ -d_f/ d_w} \exp \left( - c'_2\left( \frac{|x-y|^{d_w}}{t}\right)^{ 1/ (  d_w-1)} \right) \\
& \quad \le \p(t, x, y) \\
& \quad \le c_1 t^{ -d_f/ d_w} \exp \left( - c_2\left( \frac{|x-y|^{d_w}}{t}\right)^{ 1/ (  d_w-1)} \right), \quad \forall x, y \in \G, \,  t >0.
\end{split}
\end{equation}

Because the infinite gasket $\wt G$ and the associated Brownian motion $\wt X$ both have
re-scaling invariances that $G$ and $X$ do not, it will
be convenient to work with $\wt X$ and then use
the following observation to transfer
our results to $X$.  

\begin{lemma}[Folding lemma]
\label{infinite_to_finite_gasket}
There exists a continuous mapping $\psi: \G \to G$ such that
$\psi$ restricted to $G$ is the identity,
$\psi$ restricted to any $0$-triangle is an isometry,
and $|\psi(x) - \psi(y)| \le |x - y|$ for arbitrary $x,y \in \G$.
Moreover, if
the $\G$-valued process $\wt X$ is started at an arbitrary $x \in \G$,
then the $G$-valued process $\psi \circ \wt X$ 
has the same distribution the process $X$ started at $\psi(x)$.
\end{lemma}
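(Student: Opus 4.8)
The plan is to realize $\psi$ as the restriction to $\G$ of a folding map defined on the whole ambient plane. Let $W$ be the group of isometries of $\real^2$ generated by the reflections in the three sides of the unit triangle $H_0$; this is the affine reflection group whose chambers are precisely the unit equilateral triangles of the standard triangular tiling, with $H_0$ a strict fundamental domain. Let $\Phi : \real^2 \to H_0$ be the associated folding retraction, i.e. $\Phi(p)$ is the unique point of $H_0$ in the $W$-orbit of $p$; on each chamber $w H_0$ the map $\Phi$ agrees with the isometry $w^{-1}$, and these restrictions match along shared walls, so $\Phi$ is continuous and is an isometry on each closed chamber. I would then set $\psi := \Phi|_{\G}$.

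Three of the four required properties fall out of this description. First, each $0$-triangle of $\G$ is one of the up-pointing chambers $H_0 + v$ (with $v$ an integer combination of the two edge vectors of $H_0$), and the gasket of any triangle is carried onto the gasket of any congruent triangle by every triangle-to-triangle isometry; hence $\Phi$ maps each $0$-triangle isometrically onto $G$, which simultaneously shows $\psi(\G) \subseteq G$ and that $\psi$ is an isometry on each $0$-triangle. Second, $\Phi$ fixes $H_0$ pointwise, so $\psi$ restricts to the identity on $G \subseteq H_0$. Third --- and this is the reason for passing to the ambient plane rather than folding inside the gasket --- the global Euclidean bound $|\psi(x) - \psi(y)| \le |x-y|$ follows by joining $x$ and $y$ by the straight segment $[x,y] \subseteq \real^2$: since $\Phi$ is piecewise isometric and the segment meets only finitely many chambers, $\Phi([x,y])$ is a path in $H_0$ from $\psi(x)$ to $\psi(y)$ of length exactly $|x-y|$, whence $|\psi(x)-\psi(y)|$ is at most this length. (Folding only within $\G$ would give the analogous bound with the shortest-path metric $\rho_{\G}$ in place of $|x-y|$, which is strictly weaker.)

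For the distributional identity I would argue at the level of the approximating random walks and then pass to the limit. Recall that $\wt X$ (resp. $X$) is the scaling limit of the rescaled simple random walk $\wt X^n_{\lfloor 5^n t\rfloor}$ on $\wt G_n$ (resp. $X^n_{\lfloor 5^n t\rfloor}$ on $G_n$). Since $\psi$ is an isometry on each $0$-triangle it maps $\wt\vcal_n$ onto $\vcal_n$ and edges of $\wt G_n$ (which lie in single $0$-triangles) to edges of $G_n$; I claim the pushforward of simple random walk on $\wt G_n$ under $\psi$ is simple random walk on $G_n$. For a vertex lying in the interior of a single $0$-triangle this is immediate from the local graph isomorphism. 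The one point requiring care is a shared corner $\tilde v \in \wt\vcal_0$, where, say, $c$ distinct $0$-triangles meet: then $\tilde v$ has degree $2c$ in $\wt G_n$, its image $v$ is a corner of $G$ of degree $2$, and each of the $c$ triangles maps the two $\wt G_n$-edges at $\tilde v$ onto the two $G_n$-edges at $v$; hence the $2c$ one-step transition probabilities $1/(2c)$ lump into the two probabilities $1/2$ of the walk on $G_n$. This strong-lumpability computation shows $\big(\psi(\wt X^n_k)\big)_k$ is simple random walk on $G_n$ started from $\psi(\wt X^n_0)$.

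Finally I would start $\wt X$ at $x$, approximate $x$ by vertices $x_n \in \wt\vcal_n$ with $x_n \to x$, and combine three facts: $\psi(\wt X^n_{\lfloor 5^n\cdot\rfloor})$ equals in law the rescaled walk on $G_n$ from $\psi(x_n)$, which converges in distribution to $X$ started from $\psi(x)$ because $\psi(x_n)\to\psi(x)$ by continuity; while on the other hand $\wt X^n_{\lfloor 5^n\cdot\rfloor}\Rightarrow \wt X$ together with continuity of $\psi$ and the continuous-mapping theorem give $\psi(\wt X^n_{\lfloor 5^n\cdot\rfloor})\Rightarrow \psi\circ\wt X$. Matching the two limits yields $\psi\circ\wt X \stackrel{d}{=} X$ started from $\psi(x)$, as required. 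I expect the main obstacle to be organizing this last limiting argument cleanly --- in particular handling the convergence of the initial value and ensuring the lumping identity survives the passage through the time-rescaling and the continuous map --- whereas the geometric properties of $\psi$ are comparatively routine once the folding is set up on the whole plane.
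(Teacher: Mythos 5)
Your proof is correct and has the same skeleton as the paper's: build a folding map with the stated geometric properties, check that it projects the natural random walk on $\wt G_n$ onto the natural random walk on $G_n$, and pass to the scaling limit using continuity of $\psi$. The only substantive difference is how the folding map is constructed. The paper labels the lattice vertices $n_1(1,0)+n_2(1/2,\sqrt 3/2)$ with $\omega^{n_1+2n_2}$, sends each vertex to the vertex of $G_0$ bearing the same label, and extends affinely over each $0$-triangle in barycentric coordinates; this produces exactly your map, since those labels are precisely the orbit invariants of your affine reflection group, but the paper's construction never leaves the gasket. Your ambient-plane packaging buys one concrete advantage: the global bound $|\psi(x)-\psi(y)|\le |x-y|$ falls out by folding the straight segment $[x,y]\subseteq\real^2$, a property the paper dismisses with ``it is clear that $\psi$ \dots has the stated properties,'' and which, as you note, would only come out in the weaker shortest-path metric $\rho_{\G}$ if one folded inside the gasket. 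In the other direction, the paper's explicit labeling makes well-definedness at shared vertices immediate without invoking any reflection-group facts. Your strong-lumpability computation at shared corners (degree $2c$ collapsing onto degree $2$) and your two-sided limiting argument matching $\psi(\wt X^n_{\lfloor 5^n\cdot\rfloor})\Rightarrow\psi\circ\wt X$ against the walks on $G_n$ started from $\psi(x_n)$ are exactly the details compressed into the paper's one-line assertions that the projected process ``can be verified easily'' to be the walk on $G_n$ and that ``the result follows by taking the limit.''
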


\begin{proof} 
Let $L$ be the subset of
the plane formed by the set of points of the
form $n_1 (1, 0) + n_2 (1/2, \sqrt3 /2)$,
where $n_1, n_2$ are non-negative integers,
and the line segments that join such points that 
are distance $1$ apart.  
It is easy to see that there
is a unique labeling of the vertices of $L$
by $\{1,\omega,\omega^2\}$ 
that has the following properties.
\begin{itemize}
\item Label $(0,0)$ with $1$.
\item If vertex $v$ is labeled $\mathfrak a \in \{1,\omega,\omega^2\}$, then the vertex 
$v + (1, 0)$ are labeled with $\mathfrak a \go$.
\item 
If we think of the labels as referring to 
elements of the cyclic group of order $3$, then
if vertex $v$ is labeled $\mathfrak a \in \{1,\omega,\omega^2\}$, 
then vertex $v + (1/2, \sqrt3 /2)$ is labeled with $\mathfrak a \go^2$.
\end{itemize}
Indeed, the label of the
vertex $n_1 (1, 0) + n_2 (1/2, \sqrt3 /2)$ is 
$\omega^{n_1 + 2 n_2}$.

Given a vertex $v \in L$,  let $ \iota(v)$ be the
unique vertex in $ \{ (0,0), (1, 0),  (1/2, \sqrt3 /2)\}$ that
has the same label as $v$. 
If the vertices $v_1, v_2, v_3 \in L$ are the vertices
of a triangle with side length $1$, then $\iota(v_1), \iota(v_2), \iota(v_3)$ are all distinct.

With the above preparation, let us now define the map $\psi$. 
Given $x \in \G$, let  $\gD \in \wt \tcal_0$ be a triangle with vertices $v_1, v_2, v_3$ that contains $x$ (if $x$ belongs
to $\wt \vcal_n$, then there may be more than one such triangle,
but the choice will not matter).
We may write $x$  as a
unique convex combination of the vertices $v_1, v_2, v_3$, 
\[ x = \gl_1 v_1+ \gl_2 v_2+ \gl_3 v_3, \quad \sum_{i=1}^3 \gl_i =1, \gl_i \ge 0.
\]
The triple $(\gl_1, \gl_2, \gl_3)$ is the vector of {\em barycentric}
coordinates of $x$.
We define $\psi(x)$ by
\[ \psi(x) := \gl_1 \iota(v_1)+ \gl_2 \iota(v_2)+ \gl_3 \iota(v_3). \]
It is clear that $\psi: \G \to G$ is well-defined and has the stated properties.

Recall that $\wt X^{(n)}$ be the natural
 random walk on $\wt G_n$. It can be verified easily that the projected process 
$\psi \circ \wt X^{(n)}$ is the natural 
random walk on $G_n$.
The result follows by taking the limit as 
$n \to \infty$ and using the continuity of $\psi$.
\end{proof}

\begin{lemma}[Maximal inequality]\label{l:supbound}
(a) Let $\wt X^i$, $ 1 \le i \le n$, be
$n$ independent Brownian motions on the infinite gasket $\wt G$ starting from the initial states $x^i$, $1 \le i \le n$. For any $t >0$,
\[  \prob \left \{\sup_{0 \le s \le t} |\wt X^i_s - x^i| > r, \text{ for some } 1 \le i \le n \right \} \le 2nc_1 \exp\Big ( - c_2 (  r^{d_w}/t )^{ 1/ (d_w - 1)}\Big ),  \]
where $c_1, c_2>0$ are constants and $d_w = \log 5/ \log 2$ is the walk dimension of the gasket.

\noindent
(b) The same estimate holds for the case of $n$ independent Brownian motions $X^i$, $1 \le i \le n$, on the finite gasket $G$ starting from the initial states
$x^i$, $1 \le i \le n$.
\end{lemma}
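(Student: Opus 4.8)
The plan is to reduce to a single-particle bound, then combine the heat-kernel estimate \eqref{eq:densitybound} with a maximal inequality of Ottaviani/L\'evy type. By the union bound,
\[
\prob\left\{\sup_{0 \le s \le t}|\wt X^i_s - x^i| > r \text{ for some } 1 \le i \le n\right\} \le \sum_{i=1}^n \prob^{x^i}\left\{\sup_{0 \le s \le t}|\wt X_s - x^i| > r\right\},
\]
so it suffices to prove $\prob^x\{\sup_{0 \le s \le t}|\wt X_s - x| > r\} \le 2 c_1 \exp(-c_2(r^{d_w}/t)^{1/(d_w-1)})$ for a single Brownian motion; summing over the $n$ particles then produces the factor $2 n c_1$. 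Part (b) will follow from part (a) via the folding lemma.

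First I would establish a fixed-time tail bound: for all $y \in \G$ and $0 < u \le t$,
\[
\prob^y\{|\wt X_u - y| > \rho\} = \int_{\{z : |z-y| > \rho\}} \p(u, y, z)\,\mu(dz) \le C \exp\left(-c\,(\rho^{d_w}/u)^{1/(d_w-1)}\right).
\]
This comes from inserting \eqref{eq:densitybound} into the integral, decomposing $\{|z-y| > \rho\}$ into the dyadic annuli $\{2^k \rho \le |z-y| < 2^{k+1}\rho\}$, $k \ge 0$, bounding the $\mu$-measure of each annulus by $C(2^{k+1}\rho)^{d_f}$ using the volume growth \eqref{eq:volgrowth} (extended to all radii by the self-similarity of $\G$), and noting that the Gaussian-type factor decays so rapidly across annuli that the resulting series converges and is dominated by its $k=0$ term; the constant $c$ is a little smaller than $c_2$.

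Next, to pass from a fixed time to the supremum, I would apply the strong Markov property at the first exit time $\tau := \inf\{s \ge 0 : |\wt X_s - x| > r\}$. Since $\wt X$ has continuous sample paths, $|\wt X_\tau - x| = r$ on $\{\tau \le t\}$, so if in addition $|\wt X_t - \wt X_\tau| \le r/2$ then $|\wt X_t - x| \ge r/2$. Writing $q := \sup_{y \in \G,\, 0 \le u \le t}\prob^y\{|\wt X_u - y| > r/2\}$, the strong Markov property at $\tau$ gives
\[
\prob^x\{|\wt X_t - x| \ge r/2\} \ge (1 - q)\,\prob^x\{\tau \le t\} \ge (1-q)\,\prob^x\left\{\sup_{0 \le s \le t}|\wt X_s - x| > r\right\}.
\]
Both $q$ and $\prob^x\{|\wt X_t - x| \ge r/2\}$ are controlled by the fixed-time estimate with $\rho = r/2$. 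When $r^{d_w}/t$ is large enough that $q \le 1/2$, rearranging yields $\prob^x\{\sup_{0\le s\le t}|\wt X_s - x| > r\} \le 2C\exp(-c((r/2)^{d_w}/t)^{1/(d_w-1)})$, and absorbing the factor $(1/2)^{d_w/(d_w-1)}$ into the constant puts this in the claimed form. In the complementary regime $r^{d_w}/t$ is bounded, hence so is $(r^{d_w}/t)^{1/(d_w-1)}$; there one simply takes the final constant $c_1$ large enough that $2c_1\exp(-c_2(r^{d_w}/t)^{1/(d_w-1)}) \ge 1$, making the bound trivially true since a probability is at most $1$. This proves part (a).

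For part (b), I would invoke the folding lemma (Lemma~\ref{infinite_to_finite_gasket}): each finite-gasket motion $X^i$ started at $x^i \in G$ has the same law as $\psi \circ \wt X^i$ with $\wt X^i$ started at $x^i$ (as $\psi$ fixes $G$), and since $\psi$ is distance-non-increasing, $|X^i_s - x^i| = |\psi(\wt X^i_s) - \psi(x^i)| \le |\wt X^i_s - x^i|$ for all $s$. Thus the finite-gasket supremum is pathwise dominated by the infinite-gasket one, and part (a) applies verbatim. I expect the main obstacle to be the fixed-time tail estimate, specifically carrying out the dyadic-annulus integration of the heat kernel against the volume growth and extracting clean constants; the passage to the supremum is routine once the trivial regime is absorbed into the choice of $c_1$.
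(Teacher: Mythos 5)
Your proposal is correct and follows essentially the same route as the paper: a union bound reduces to a single particle, and the supremum is then controlled by the strong Markov property at the first exit time of $B(x,r)$ together with a fixed-time tail bound at radius $r/2$, with part (b) obtained from the folding lemma exactly as you describe. The only real difference is that the paper simply quotes the single-time estimate $\sup_{y \in \G}\sup_{s \le t}\prob^y\{|\wt X_s - y| > r/2\} \le c_1 \exp\bigl(-c_2 (r^{d_w}/t)^{1/(d_w-1)}\bigr)$ from Barlow's lecture notes (Theorem 2.23(e) of \cite{barlow95}), whereas you rederive it from the heat-kernel bound \eqref{eq:densitybound} by dyadic-annulus integration against the volume growth \eqref{eq:volgrowth}, and the paper's two-term decomposition gives the factor $2$ directly without your case split on whether $q \le 1/2$.
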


\begin{proof} (a) Let  $\wt X = (\wt X_t)_{t\ge 0}$ be a Brownian motion on $\wt G$. Then for $x\in \wt G$, $t> 0$, and $r> 0$, 
\[ \begin{split}
\prob^x \left \{\sup_{0 \le s \le t} |\wt X_s - x| > r \right\} 
& \le \prob^x \{ |\wt X_t - x| > r/2\}\\ 
& \quad + \prob^x\big \{ |\wt X_t - x| \le r/2, \sup_{0 \le s \le t} |\wt X_s - x| > r\big\}. \\
\end{split}
\]
Writing $S := \inf\{s> 0:  |\wt X_s - x| > r \}$, the second term above equals 
\[
\E^x \left[ 1_{\{S < t\}} \prob^{\wt X_S} \{ |\wt X_{t-S} - x| \le r/2\} \right] 
\le \sup_{ y \in \partial B(x, r)} \sup_{s \le t} \prob^y \{ |\wt X_{t-s} - y| > r/2\},   
\]
where $\partial B(x, r)$ is the boundary of $B(x,r)$
so that 
\begin{align*}
\prob^x \left\{\sup_{0 \le s \le t} |\wt X_s - x| > r \right\} &\le 2 \sup_{ y \in \wt G} \sup_{ s \le t} \prob^y\{|\wt X_s  - y| > r/2\}\\
\label{supbound}
&\le 2c_1 \exp\Big ( - c_2 (  r^{d_w}/t )^{ 1/ (d_w - 1)}\Big ),
\end{align*}
where the last estimate is taken from \cite[Theorem 2.23(e)]{barlow95}.  The lemma now follows by a union bound.

(b) This is immediate from part (a) and Lemma~\ref{infinite_to_finite_gasket}.
\end{proof}

\subsection{Collision time estimates}

We first show that two independent copies of $\wt X$ collide with positive probability.

\begin{proposition}
\label{prop:diag_nonpolar}
Let $\wt X'$ and $\wt X''$ be two independent copies of $\wt X$.
Then,
\[
\prob^{(x',x'')}\{\exists t > 0 : \wt X_t' = \wt X_t''\} > 0
\]
for all $(x',x'') \in \G \times \G$.
\end{proposition}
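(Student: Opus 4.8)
The plan is to reduce the two-particle collision question to a one-particle hitting-time question and then show that a certain diagonal set is non-polar via a second-moment (energy / Green-function) argument. Consider the difference or, more naturally, the $\wt G \times \wt G$-valued process $(\wt X', \wt X'')$, which is itself a nice symmetric Markov process (a ``Brownian motion'' on the product gasket) with a jointly continuous transition density $q(t,(x',x''),(y',y'')) = \p(t,x',y')\,\p(t,x'',y'')$ with respect to the product measure $\mu \otimes \mu$. The event $\{\exists t>0 : \wt X_t' = \wt X_t''\}$ is exactly the event that this product process hits the diagonal $D := \{(z,z) : z \in \wt G\}$. So the statement I want to prove is that $D$ is non-polar for the product process.

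\medskip

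First I would establish the key analytic input: a two-sided estimate, or at least a workable lower bound, on the transition density $\p(t,x,y)$ to complement the upper bound \eqref{eq:densitybound} already quoted. The matching lower bound of the same exponential form is standard for the gasket (Barlow--Perkins), and from it the product density $q$ inherits both-sided control. Second, I would set up the standard capacity / energy criterion for hitting a set: a compact piece $D_K := \{(z,z): z \in K\}$ of the diagonal (with $K$ a $0$-triangle, say) is non-polar if and only if it carries a probability measure of finite energy with respect to the Green function (or the $\lambda$-resolvent kernel) of the product process. The natural candidate measure is the pushforward of $\mu|_K$ under $z \mapsto (z,z)$.

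\medskip

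The crux of the argument — and the step I expect to be the main obstacle — is verifying that this diagonal measure has finite energy, i.e. that
\[
\iint \Big( \int_0^\infty e^{-\lambda t}\, q\big(t,(z,z),(w,w)\big)\,dt \Big)\, \mu(dz)\,\mu(dw) < \infty.
\]
Using $q(t,(z,z),(w,w)) = \p(t,z,w)^2$ and the heat-kernel bounds, the inner time integral near $t=0$ behaves like $\int_0 t^{-2 d_f/d_w} \exp(-2 c (|z-w|^{d_w}/t)^{1/(d_w-1)})\, dt$, which after the substitution $t \mapsto |z-w|^{d_w} t$ is comparable to $|z-w|^{d_w(1 - 2 d_f/d_w)} = |z-w|^{d_w - 2 d_f}$. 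The energy integral is then finite provided
\[
\iint_{K \times K} |z-w|^{d_w - 2 d_f}\, \mu(dz)\,\mu(dw) < \infty,
\]
and by the volume-growth bound \eqref{eq:volgrowth} this holds precisely when $d_w - 2 d_f + d_f > 0$, i.e. when $d_f < d_w$. Since $d_f = \log 3/\log 2 < \log 5/\log 2 = d_w$, the diagonal is ``thin enough'' relative to the walk dimension and the integral converges. This is exactly the analogue of the transience/recurrence dichotomy that makes points non-polar in low dimension, and it is the computation where the specific fractal exponents of the gasket must be invoked.

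\medskip

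Finally I would upgrade the ``finite energy'' conclusion to the pointwise statement. Finite energy gives, by the standard theory, $\prob^{(x',x'')}\{\text{hit } D_K\} > 0$ for $(\mu\otimes\mu)$-a.e. starting point, and in fact for quasi-every starting point. To get strict positivity for \emph{every} $(x',x'') \in \wt G \times \wt G$, I would combine this with the scaling property \eqref{e:bmscaling} and the maximal inequality of Lemma~\ref{l:supbound}: with positive probability both processes stay inside a common $0$-triangle for a positive time while the pair moves into the set of good starting points, so an absolute-continuity / support argument propagates positivity of the collision probability from a.e. point to all points. This last propagation step is routine once the energy estimate is in hand; the genuine content is the finiteness of the energy integral, which hinges on the inequality $d_f < d_w$.
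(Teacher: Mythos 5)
Your proposal follows essentially the same route as the paper's proof: reduce the collision question to non-polarity of the diagonal for the product process, verify finite energy of the lifted measure $\mu$ on (a compact piece of) the diagonal using the heat-kernel upper bound \eqref{eq:densitybound} and the volume growth \eqref{eq:volgrowth} (your convergence condition $d_f < d_w$ is exactly the paper's condition $\gamma = 2d_f - d_w < d_f$), and then propagate positivity of the hitting probability to every starting point. The only cosmetic difference is that no heat-kernel lower bound is actually required: for the energy estimate the upper bound suffices, and for the final propagation step the paper simply uses strict positivity of $\p(t,x,y)$ for all $x,y$ and $t>0$ together with the Markov property, rather than your scaling/maximal-inequality argument.
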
  

\begin{proof}
Note that  ${\bf \wt X} = (\wt X' , \wt X'')$  is a Feller process on the locally compact separable metric space 
$\wt G \times \wt G$ that  is symmetric
with respect to the Radon measure $\mu \otimes \mu$ and has transition
densities $\wt p(t, x', y') \times \wt p(t, x'', y'')$.  The corresponding $\alpha$-potential density is
\[ u_\alpha(\x, \y) := \int_0^\infty e^{-\alpha t} \wt p(t, x_1, y_1) \times \wt p(t, x_2, y_2) \, dt
\quad \text{for } \alpha > 0, \]
where $\x=(x_1, x_2)$ and $\y = (y_1, y_2)$.
A standard potential theoretic result says that a  compact set
$B \subseteq \wt G \times \wt G$ is non-polar if there exists a non-zero finite measure $\nu$ that is supported on B and has finite energy,
that is,
\[ \int \int u^\alpha(\x,\y)\,  \nu(d\x) \, \nu(d\y) < \infty.\]

Take $B = \{ (x', x'') \in G \times  G: x' = x''\}$ and 
$\nu$ to be the `lifting' of the Hausdorff measure $\mu$
on the finite gasket onto $B$. We want to show that
\[ 
\int_{G} \int_{G}  \int_0^{\infty} 
e^{-\alpha t} \p^2(t, x, y) \, dt \, \mu(dx) \mu(dy) < \infty.
\]
It will be enough to show that 
\[ \int_{G} \int_{ G}  \int_0^{\infty} 
 \p^2(t, x, y) \, dt \, \mu(dx) \, \mu(dy) < \infty.\]

It follows from the transition density
estimate \eqref{eq:densitybound}
and a straightforward integration that
\[ \int_0^{\infty} \p^2(t, x,y) \, dt \le C |x-y|^{ - \gamma }
\]
for some constant $C$, where $ \gamma := 2 d_f - d_w $.   Thus,
\[
\begin{split}
& \int_{G} \int_{G}  \int_0^{\infty} \p^2(t, x,y)\, dt \, \mu(dx) \, \mu(dy) \\
& \quad \le C  \int_{G} \int_{G}  |x-y|^{ - \gamma } \, \mu(dx) \, \mu(dy) \\
 & \quad \le C  \int_{G} \int_0^{\infty} \mu\{ x \in G:  |x-y|^{ - \gamma} > s \} \, ds \, \mu(dy)\\
  & \quad \le C  \int_G \int_0^{\infty} \mu \{ x \in G:  |x-y| < s^{ - 1/ \gamma} \}\,  ds \, \mu(dy)\\
    & \quad \le C  + C\int_{G} \int_1^{\infty} \mu \{ x \in G:  |x-y| < s^{ - 1/ \gamma} \} \, ds  \, \mu(dy)\\
 & \quad \le C  +  C_1\int_{G} \int_1^{\infty}  s^{ - d_f/ \gamma} \, ds \, \mu(dy) \quad [\text{ By } \eqref{eq:volgrowth}]\\
  & \quad \le C  +  C_2 \int_1^{\infty}  s^{ - d_f/ \gamma}  \, ds. 
\end{split}
\]
It remains to note that 
$ \gamma - d_f
= (2 \log 3 / \log 2 - \log 5 / \log 2) - (\log 3 / \log 2)
= (\log 3 - \log 5)/ \log 2 < 0$,
and so $d_f / \gamma < 1$.

This shows that $\prob^{(x',x'')}\{ {\bf \wt X}\text{ hits the  diagonal} \} > 0$ for
some $(x',x'') \in \wt G \times \wt G$.  Because 
$\p^2(t, x, y) > 0$ for all $x, y \in \wt G$ and $t>0$, we even have
$\prob^{(x',x'')}\{ {\bf \wt X} \text{ hits the diagonal} \} > 0$ for all $(x', x'')\in \wt G \times \wt G$.
\end{proof}

We next establish a uniform lower bound  on the collision probability of a pair of independent Brownian motions on the infinite gasket as long as the distance between their starting points remains bounded. 
 
\begin{theorem}\label{l:lower_bdd_hitting_prob}
There exist constants $ \beta >  0$ and 
$ \underline{p} > 0$ such that if $\wt X'$ and $\wt X''$ are two independent Brownian motions on $\wt G$ starting 
from any two points $x, y $ belonging to the same $n$-triangle of $\wt G$, then 
\[
\prob^{(x, y) }\{\wt X_t'  =   \wt X_t''  \text{ for some }  t  \in (0,  \gb 5^{-n} ) \} \ge \underline{p}.
\]
\end{theorem}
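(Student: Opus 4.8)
The plan is to strip off the scale $n$ using the self-similarity \eqref{e:bmscaling}, reducing everything to a single estimate on pairs starting at Euclidean distance at most $1$, and then to prove that estimate by a second-moment (Paley--Zygmund) argument applied to the occupation of a shrinking tube around the diagonal. The point of the second-moment method is that every constant that appears is controlled purely by the heat-kernel bounds and the volume growth, so the resulting lower bound is automatically uniform over the (non-compact) family of $n$-triangles and over the starting pair.

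First I would reduce to $n=0$. Iterating \eqref{e:bmscaling} gives $\big(2^{n}\wt X_{t}\big)_{t\ge 0}$ under $\prob^{x}$ equal in law to $\big(\wt X_{5^{n}t}\big)_{t\ge 0}$ under $\prob^{2^{n}x}$, and applying this to both coordinates of the independent pair shows, after the time change $t=5^{n}s$, that
\[
\prob^{(x,y)}\{\wt X_s'=\wt X_s'' \text{ for some } s\in(0,\beta 5^{-n})\}=\prob^{(2^n x,2^n y)}\{\wt X_t'=\wt X_t'' \text{ for some } t\in(0,\beta)\}.
\]
Since $x,y$ lie in a common $n$-triangle, $2^n x,2^n y$ lie in a common $0$-triangle, so $|2^n x-2^n y|\le 1$. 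Hence it suffices to find $\beta,\underline p>0$ such that any two independent copies of $\wt X$ started at points of $\wt G$ at distance at most $1$ collide during $(0,\beta)$ with probability at least $\underline p$.

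For this $n=0$ estimate, fix a crossover time $s_0$ (large enough, depending only on the gasket constants) and a slightly larger $\beta_0$, set $\beta:=2\beta_0$, and for $\eps>0$ define the tube occupation $Z_\eps:=\int_{s_0}^{\beta_0}\ind\{|\wt X_s'-\wt X_s''|<\eps\}\,ds$. By continuity of the sample paths the events $\{Z_\eps>0\}$ decrease as $\eps\downarrow0$ to $\{\inf_{s_0\le s\le\beta_0}|\wt X_s'-\wt X_s''|=0\}$, and since the infimum over the compact interval is attained this is exactly the event that the two processes collide at some time in $[s_0,\beta_0]\subset(0,\beta)$. Thus it is enough to bound $\prob\{Z_\eps>0\}\ge (\E[Z_\eps])^2/\E[Z_\eps^2]$ below, uniformly in small $\eps$ and in the starting pair, and then let $\eps\downarrow0$. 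The two moments are computed from the transition density. For the first moment I would use Chapman--Kolmogorov and the symmetry of $\p$ together with the two-sided volume regularity $c\eps^{d_f}\le\mu(B(z,\eps))\le C\eps^{d_f}$ accompanying \eqref{eq:volgrowth}, and the standard near-diagonal lower bound $\p(2s,x,y)\ge c\,s^{-d_f/d_w}$ (the companion of \eqref{eq:densitybound}, valid for $s\ge c'|x-y|^{d_w}$, hence throughout $[s_0,\beta_0]$ because $|x-y|\le1$), to get $\E[Z_\eps]\ge c_*\eps^{d_f}$. For the second moment I would condition via the Markov property at the earlier of the two times and bound the reoccupation probability $\prob^{(u,v)}\{|\wt X_r'-\wt X_r''|<\eps\}\le C\eps^{d_f}r^{-d_f/d_w}$ using only the upper bound $\p\le c_1 r^{-d_f/d_w}$ from \eqref{eq:densitybound} and volume regularity, obtaining $\E[Z_\eps^2]\le C\eps^{2d_f}\int_{s_0}^{\beta_0}\!\int_s^{\beta_0}(t-s)^{-d_f/d_w}s^{-d_f/d_w}\,dt\,ds$. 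All constants depend only on the gasket, on $\beta$, and on $|x-y|\le1$, so $\underline p:=c_*^2/C'$ works uniformly.

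The main obstacle is the second-moment estimate, specifically the convergence of the double time integral: the reoccupation singularity $(t-s)^{-d_f/d_w}$ is integrable precisely because $d_f/d_w=\log 3/\log 5<1$, i.e. $d_f<d_w$. This is the same self-intersection condition (mirroring $\gamma-d_f<0$) that drives Proposition~\ref{prop:diag_nonpolar}, and it is exactly what forces the short-time ``diagonal'' contribution to $\E[Z_\eps^2]$ to be of the same order $\eps^{2d_f}$ as $(\E[Z_\eps])^2$ rather than dominating it, keeping the Paley--Zygmund ratio bounded below as $\eps\downarrow0$. A secondary technical point is the appeal to the matching lower heat-kernel bound, which is not displayed in the excerpt but belongs to the standard two-sided Barlow--Perkins estimates; the maximal inequality of Lemma~\ref{l:supbound} can be used in the first-moment step to confirm that, by time $s\ge s_0$, each process has positive probability of lying within $O(s^{1/d_w})$ of the other's starting point, which is what makes the near-diagonal regime accessible uniformly.
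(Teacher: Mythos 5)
Your proposal is correct, but it takes a genuinely different route from the paper's for the core scale-zero estimate. Both arguments use the scaling relation \eqref{e:bmscaling} in the same way to pass between $n$-triangles and $0$-triangles (you do it first, the paper does it last); the difference is how the uniform positive lower bound at scale zero is obtained. The paper argues softly: it introduces the Brownian motion killed outside an extended triangle $\gD^e$, shows via the potential-theoretic computation of Proposition~\ref{prop:diag_nonpolar} that the killed pair hits the diagonal with positive probability from a positive-measure set of starting points (Lemma~\ref{l:killed_hitting}), upgrades this to positivity from \emph{every} starting pair using positivity of the killed density (Lemma~\ref{lem:hit_open_sets}) and continuity of the hitting probability (Lemma~\ref{lem:cont}), and then extracts uniformity from compactness of $\gD\times\gD$ plus invariance under local isometries between extended triangles (with the origin triangle as a special case). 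You instead run a quantitative second-moment (Paley--Zygmund) argument on the occupation time $Z_\eps$ of a shrinking tube around the diagonal, which delivers positivity and uniformity in one stroke: every constant comes from the two-sided heat-kernel bounds and Ahlfors regularity of $\mu$, which hold uniformly over the whole infinite gasket, so no compactness, continuity, or isometry considerations are needed, and the identification of $\bigcap_{\eps>0}\{Z_\eps>0\}$ with the collision event on $[s_0,\beta_0]$ is justified exactly as you say by path continuity and compactness of the time interval. The trade-off lies in the inputs: the paper uses only the upper bound \eqref{eq:densitybound} (for the energy integral) together with the point-hitting estimate of Lemma~\ref{lem:escape_prob} and the existence and continuity of killed densities, whereas you need the matching near-diagonal \emph{lower} heat-kernel bound and the lower volume bound $\mu(B(z,\eps)) \ge c\,\eps^{d_f}$, neither displayed in the paper; both are, however, in \cite{barlow88}, so the appeal is legitimate (and your choice of $s_0$ large relative to the bounded initial separation correctly keeps the exponential factor in the lower bound harmless). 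What your approach buys is explicitness and robustness: the constant $\underline{p}=c_*^2/C'$ is effective, the mechanism is transparently the condition $d_f/d_w<1$ (the same condition, equivalent to $\gamma-d_f<0$, that drives Proposition~\ref{prop:diag_nonpolar}), and the argument would transfer verbatim to any state space with two-sided sub-Gaussian estimates and $d_f<d_w$ --- the kind of generality the introduction gestures at; what the paper's approach buys is independence from lower heat-kernel estimates, at the price of being tied to the triangle-and-isometry structure of the gasket.
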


This result will require a certain amount of work, so we first note that it leads easily
to an analogous result for the finite gasket.

\begin{corollary}
\label{cor:lower_bdd_hitting_prob}
If $X'$ and $X''$ are two independent Brownian motions on $G$ starting 
from any two points $x, y $ belonging to the same $n$-triangle of $G$, then 
\[
 \prob^{(x, y) }\{X_t'  =   X_t''  \text{ for some }  t  \in (0,  \gb 5^{-n} )  \} \ge \underline{p},
\]
where $ \beta >  0$ and  $ \underline{p} > 0$ are the constants given in Theorem \ref {l:lower_bdd_hitting_prob}.
\end{corollary}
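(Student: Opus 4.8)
The plan is to transfer the infinite-gasket estimate of Theorem~\ref{l:lower_bdd_hitting_prob} directly to the finite gasket by means of the folding map $\psi$ of Lemma~\ref{infinite_to_finite_gasket}. First I would observe that, for $n \ge 0$, every $n$-triangle of the finite gasket $G$ is also an $n$-triangle of the infinite gasket $\wt G$: the set $G$ occupies a single $0$-triangle of $\wt G$, and the subdivision of $G$ into $n$-triangles of side $2^{-n}$ agrees with the subdivision of $\wt G$ restricted to that region. Consequently, if $x$ and $y$ lie in a common $n$-triangle of $G$, they lie in a common $n$-triangle of $\wt G$, so Theorem~\ref{l:lower_bdd_hitting_prob} applies to them.

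Next I would let $\wt X'$ and $\wt X''$ be two independent Brownian motions on $\wt G$ started at $x$ and $y$, respectively. Theorem~\ref{l:lower_bdd_hitting_prob} then yields
\[
\prob^{(x, y)}\{\wt X_t' = \wt X_t'' \text{ for some } t \in (0, \beta 5^{-n})\} \ge \underline{p}.
\]
Since $x, y \in G$ and $\psi$ restricts to the identity on $G$, we have $\psi(x) = x$ and $\psi(y) = y$. By the Folding Lemma, the process $\psi \circ \wt X'$ (resp.\ $\psi \circ \wt X''$) is a Brownian motion on $G$ started at $x$ (resp.\ $y$); and because $\wt X'$ and $\wt X''$ are independent while $\psi$ is a fixed deterministic map, the pair $(\psi \circ \wt X', \psi \circ \wt X'')$ has the law of a pair of independent Brownian motions $(X', X'')$ on $G$ started from $(x, y)$.

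Finally I would invoke the elementary fact that applying a map preserves equality: whenever $\wt X_t' = \wt X_t''$ we also have $\psi(\wt X_t') = \psi(\wt X_t'')$, so
\[
\{\wt X_t' = \wt X_t'' \text{ for some } t \in (0, \beta 5^{-n})\} \subseteq \{\psi(\wt X_t') = \psi(\wt X_t'') \text{ for some } t \in (0, \beta 5^{-n})\}.
\]
Taking probabilities and using the distributional identification of the previous step gives
\[
\prob^{(x, y)}\{X_t' = X_t'' \text{ for some } t \in (0, \beta 5^{-n})\} \ge \prob^{(x, y)}\{\wt X_t' = \wt X_t'' \text{ for some } t \in (0, \beta 5^{-n})\} \ge \underline{p},
\]
which is the asserted bound with the same constants $\beta$ and $\underline{p}$. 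I do not expect a genuine obstacle here, as all the substance lies in Theorem~\ref{l:lower_bdd_hitting_prob}; the only points demanding care are the matching of the two families of $n$-triangles and the observation that folding can only create collisions and never destroy them, so that the inclusion of events—and hence the inequality of probabilities—runs in the favorable direction.
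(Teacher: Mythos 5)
Your proof is correct and is essentially the paper's own argument: the paper likewise applies Theorem~\ref{l:lower_bdd_hitting_prob} on the infinite gasket and then uses the folding map $\psi$ of Lemma~\ref{infinite_to_finite_gasket}, noting that $\wt X_t' = \wt X_t''$ forces $\psi(\wt X_t') = \psi(\wt X_t'')$, so folding can only create collisions. The extra details you supply (matching the $n$-triangles of $G$ with those of $\wt G$, and preservation of independence under the deterministic map $\psi$) are exactly the points the paper leaves implicit.
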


\begin{proof} 
The proof follows immediately  from Lemma~\ref{infinite_to_finite_gasket},
because if  $\wt X_t' =  \wt X_t''$ for some $t$,
then it is certainly the case that 
$\psi \circ \wt X_t' = \psi \circ \wt X_t''$.
\end{proof}

\begin{definition}[Extended triangles for the infinite gasket]
Recall that $\wt \tcal_n$ is the set of all $n$-triangles of $\G$. Given $ \gD \in \wt \tcal_0$ such that $\gD$ does not have the origin as one its vertices, we define the corresponding {\em extended triangle} $\gD^e \subset \G$ as the interior of the union of the original $0$-triangle $\gD$ with the three  neighboring $1$-triangles in $\G$ which share one vertex  with $\gD$ and are not contained in $\gD$. Note that for the (unique) triangle $\gD$ in $\wt \tcal_n$ having the origin as one of its vertices,  there are  two neighboring $1$-triangles in $\G$ that share one vertex  with it which are not contained in $\gD$. In this case, by $\gD^e$, we mean the interior of the union of $\gD$ and these two triangles.
 \end{definition}

  Fix some $\gD \in \wt \tcal_0$. Let $\wt Z$ be the Brownian motion on $\gD^e$ killed when it exits $\gD^e$. It follows from arguments similar to those on
 \cite[page 590]{doob01}, that $\wt Z$ has transition densities
$\p_K(t, x, y)$, $t>0$, $x, y \in  \gD^e$, 
with respect to the restriction of $\mu$ to $\gD^e$,
and these densities have the following properties:
\begin{itemize}

\item $\p_K(t, x, y) = \p_K(t,y,x)$ for all $t>0$, $x, y \in  \gD^e$.

\item $\p_K(t, x, y) \le  \p(t, x, y)$, for all $t>0$, $x, y \in  \gD^e$.

\item $y \mapsto \p_K(t, x, y)$ is continuous for all $t>0$, $x \in  \gD^e$,
and $x \mapsto \p_K(t, x, y)$ is continuous for all $t>0$, $y \in  \gD^e$.

\end{itemize}

It follows that the process
$\wt Z$ is Feller and symmetric with respect to the measure $\mu$.

\begin{lemma}
\label{l:killed_hitting}
Let $ \wt Z',  \wt Z''$ be two independent copies of
the killed Brownian motion
$\wt Z$. Given any $\gee>0$, there exists $0< \gd < \gee$
such that the set of $(x, y) \in \gD^e \times \gD^e$ for which 
\[ 
\prob^{(x, y) }\{ \wt Z_t'  =  \wt Z_t'' \text{ for some } t \in (\delta, \epsilon ) \} > 0
\]   
 has  positive $ \mu \otimes \mu$ mass.
\end{lemma}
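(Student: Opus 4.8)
The plan is to show that the pair $(\wt Z',\wt Z'')$ spends a positive amount of ``collision local time'' on the diagonal of $\gD^e\times\gD^e$ during the window $(\gd,\gee)$, for a positive-measure set of starting pairs, via a first-and-second-moment (energy) argument of exactly the type behind Proposition~\ref{prop:diag_nonpolar}. Concretely, for small $r>0$ I would work with the normalized approximate occupation times
\[
N_r := \frac{1}{r^{d_f}}\int_\gd^\gee \ind\{d(\wt Z_s',\wt Z_s'')<r\}\,ds,
\]
with the convention that the integrand vanishes once either process has been killed. By continuity of the sample paths and compactness of $[\gd,\gee]$, the events $\{N_r>0\}$ decrease as $r\downarrow 0$ to the event that $\wt Z'$ and $\wt Z''$ meet (while both still lie in $\gD^e$) at some time of $[\gd,\gee]$. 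Since $N_r\ge 0$, the Paley--Zygmund inequality gives
\[
\prob^{(x,y)}\{\wt Z_t'=\wt Z_t''\ \text{for some}\ t\in[\gd,\gee]\}\ \ge\ \limsup_{r\downarrow 0}\frac{\bigl(\E^{(x,y)}[N_r]\bigr)^2}{\E^{(x,y)}[N_r^2]},
\]
so it suffices to produce a set of $(x,y)\in\gD^e\times\gD^e$ of positive $\mu\otimes\mu$ mass on which the first moment stays bounded below and the second moment stays finite; a harmless shrinking of the window then forces the collision into the open interval.

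For the first moment, the symmetry of $\p_K$ and Chapman--Kolmogorov give $\int_{\gD^e}\p_K(s,x,z)\p_K(s,y,z)\,\mu(dz)=\p_K(2s,x,y)$, and, using the volume regularity $\mu(B(z,r))\asymp r^{d_f}$ (the lower bound being as standard for the gasket as the upper bound \eqref{eq:volgrowth}), one checks that $\E^{(x,y)}[N_r]$ stays, up to two-sided constants, comparable to
\[
m(x,y):=\int_\gd^\gee \p_K(2s,x,y)\,ds.
\]
Averaging over the starting pair and using $\int_{\gD^e}\p_K(t,x,y)\,\mu(dx)=\prob^y\{T_{\gD^e}>t\}$, where $T_{\gD^e}$ is the exit time from $\gD^e$, yields
\[
\int_{\gD^e}\!\int_{\gD^e} m(x,y)\,\mu(dx)\,\mu(dy)=\int_\gd^\gee\int_{\gD^e}\prob^y\{T_{\gD^e}>2s\}\,\mu(dy)\,ds,
\]
which is strictly positive because the killed process survives any fixed time with positive probability from interior starting points. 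Hence $\{m>0\}$ has positive $\mu\otimes\mu$ mass.

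For the second moment I would expand $\E^{(x,y)}[N_r^2]$ over ordered time pairs $\gd<s<t<\gee$ and apply Chapman--Kolmogorov twice; in the limit it is governed by
\[
M(x,y):=2\int_\gd^\gee\!\int_s^\gee\!\int_{\gD^e}\p_K(s,x,z)\,\p_K(s,y,z)\,\p_K\bigl(2(t-s),z,z\bigr)\,\mu(dz)\,dt\,ds.
\]
Averaging over $(x,y)$, bounding $\p_K\le\p$ and $\int_{\gD^e}\p_K(s,x,z)\,\mu(dx)\le 1$, and invoking the on-diagonal bound $\int_{\gD^e}\p(u,z,z)\,\mu(dz)\le c_1 u^{-d_f/d_w}\mu(\gD^e)$ from \eqref{eq:densitybound} together with $\mu(\gD^e)<\infty$, the inner $t$-integral collapses to $\int_0^{\gee-s} v^{-d_f/d_w}\,dv$, which converges precisely because $d_f/d_w=\log 3/\log 5<1$. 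Thus $\int\!\int M\,d\mu\,d\mu<\infty$, so $\{M<\infty\}$ has full measure and $\{m>0\}\cap\{M<\infty\}$ has positive measure. This is the same integrability phenomenon---the inequality $\gamma=2d_f-d_w<d_f$---that made the diagonal non-polar in Proposition~\ref{prop:diag_nonpolar}.

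The main obstacle is the second-moment estimate, and specifically the bookkeeping needed to pass the limit $r\downarrow 0$ through the time integrals: one must control the (possibly oscillating but two-sided bounded) ratios $\mu(B(z,r))/r^{d_f}$ and use the joint continuity and off-diagonal decay of $\p_K$ to identify $\lim_r\E[N_r]$ and $\lim_r\E[N_r^2]$ with $m$ and $M$; the positivity of the first moment and the integrability in the second are soft once the energy computation of Proposition~\ref{prop:diag_nonpolar} is available. Equivalently, and perhaps more cleanly, one can bypass the $N_r$ altogether: the finite energy makes the diagonal non-polar and produces a positive continuous additive functional $L$ (a collision local time) with Revuz measure the image of $\mu$ on the diagonal, so that $\E^{(x,y)}[L_\gee-L_\gd]=m(x,y)$; then $\int\!\int m\,d\mu\,d\mu>0$ forces $\E^{(x,y)}[L_\gee-L_\gd]>0$, hence $\prob^{(x,y)}\{L_\gee>L_\gd\}>0$, on a positive-measure set, and $L_\gee>L_\gd$ entails a collision in $(\gd,\gee)$.
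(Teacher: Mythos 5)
Your route is genuinely different from the paper's. The paper argues softly: it first runs the energy computation of Proposition~\ref{prop:diag_nonpolar} for the killed pair to get $\prob^{(x_0,y_0)}\{\wt Z_t'=\wt Z_t'' \text{ for some } t>0\}>0$ for \emph{some} starting pair, then pigeonholes over time intervals of length $\gee$ and uses the Markov property to move the collision into a window $(0,\gee)$ from a possibly different starting pair, uses continuity of probability to replace $(0,\gee)$ by $(\eta,\gee)$, and finally disintegrates once more through $\p_K(\eta/2,\cdot,\cdot)$ to convert positivity at one point into positivity on a set of positive $\mu\otimes\mu$ measure, taking $\gd=\eta/2$. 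You instead run a quantitative second-moment (Paley--Zygmund) argument directly in the window $(\gd,\gee)$. Your moment computations are essentially right, the finiteness of the second moment rests on exactly the same inequality $d_f/d_w<1$ that drives the paper's energy estimate, and your scheme would in fact prove a slightly stronger statement (any prescribed $0<\gd<\gee$ works, whereas the paper must produce $\gd$ nonconstructively).

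There is, however, one genuine gap: the claimed identification of $\bigcap_{r>0}\{N_r>0\}$ with the event that the \emph{killed} processes meet during $[\gd,\gee]$ is false as stated, because of the killing. On $\bigcap_r\{N_r>0\}$ you get times $s_r$ at which both particles are alive and within distance $r$ of each other; a subsequential limit $s^*$ of the $s_r$ may coincide with the killing time of one (or both) particles, in which case the \emph{unkilled} paths meet at a point of $\partial\gD^e$ at time $s^*$ while the killed processes never meet. Thus Paley--Zygmund only lower-bounds the probability of the union of the desired collision event with this boundary event, and the lemma does not follow until the latter is ruled out. Two routine patches are available: (i) insert into the integrand of $N_r$ the requirement that both particles lie in a fixed compact subset of $\gD^e$ (for instance the closed triangle $\gD$); then any limit point of the $s_r$ occurs strictly before both killing times, at the price of the first moment becoming $\int_\gd^\gee\int_\gD \p_K(s,x,z)\,\p_K(s,y,z)\,\mu(dz)\,ds$, which is still positive on average; or (ii) observe that $\partial\gD^e$ is a \emph{finite} set and that each singleton $(q,q)$ is polar for the pair $(\wt X',\wt X'')$, since the on-diagonal bound forces $\int_0^1 t^{-2d_f/d_w}\,dt=\infty$ (here $2d_f/d_w>1$), so by the same potential-theoretic criterion the paper uses, the boundary event is null. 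Your closing local-time variant, by contrast, does not suffer from this leak (a positive continuous additive functional of the killed pair can only increase while the killed pair sits on the diagonal), but it invokes the Revuz correspondence and the existence of the collision local time as a black box, which is heavier machinery than either the paper's argument or your main line.
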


\begin{proof}
An argument similar to that in the proof of
Proposition~\ref{prop:diag_nonpolar} shows that
\[
\prob^{(x_0, y_0)}\{\wt Z_t' = \wt Z_t'' \text{ for some } t > 0\} > 0
\]
for some $(x_0,y_0) \in \gD^e \times \gD^e$.

Thus, for any $\epsilon > 0$, we can partition the interval $(0, \infty)$ into the subintervals $ (0, \epsilon)$, $[i\epsilon, (i+1)\epsilon)$, $i \ge 0$ and use the Markov property to deduce that
there exists a point 
$(x_1, y_1) \in  \gD^e \times  \gD^e$  such that
\begin{equation}\label{eq:nonpolargoodpoint}
 \prob^{(x_1, y_1) }\{ \wt Z_t'  = \wt Z_t''  \text{ for some } t \in (0, \epsilon) \} > 0.
 \end{equation}

By continuity of probability, we can find $0< \eta < \epsilon < \infty$ such that
 \[  \prob^{(x_1, y_1) }\{\wt Z_t'  = \wt Z_t''  \text{ for some } t\in (\eta, \epsilon ) \} > 0. \] 
By the Markov property, 
\[
\begin{split}
0 
& < \prob^{(x_1, y_1) }\{\wt Z_t'  =  \wt Z_t''   \text{ for some } t\in (\eta, \epsilon ) \}  \\
& \quad = 
\int_{\gD^e} \int_{\gD^e}  
 \p_K(\eta/2, x_1, x) \p_K(\eta/2, y_1, y)  \\
& \qquad \times \prob^{(x, y) }\{  \wt Z_t'  =  \wt Z_t'',   \text{ for some } t \in  (\eta/2, \epsilon-\eta/2) \}
 \, \mu(dx) \, \mu(dy). \\ 
\end{split}
\]

Therefore,   the initial points $(x, y) \in \gD^e \times \gD^e$ for which
the probability 
\[ 
\prob^{(x, y) }\{  \wt Z_t'  =  \wt Z_t'' \text{ for some } t \in (\eta/2,  \epsilon-\eta/2) \} 
\]   
is positive form a set with positive $\mu \otimes \mu$ measure. The proof now follows by taking $\delta=\eta/2$.
\end{proof}
 
We record the following result for the reader's ease of reference.

\begin{lemma}[Lemma 3.35 of \cite{barlow95}]  \label{lem:escape_prob}
There exists a constant $c_1 > 1$ such that if $x, y \in \gD^e$, $ r = |x - y|$, then
\[ 
\prob^{x} \{ \wt X_t = y  \ \text{ for some } \ t \in (0, r^{d_w}) \ \text{ and } \  | \wt X_t - x| \le c_1 r \  \text{ for all } \ t \le r^{d_w} \} > 0. 
\]
\end{lemma}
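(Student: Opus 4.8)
The plan is to split the event into two nearly independent demands: that $\wt X$ hits the point $y$ within the natural time budget $r^{d_w}$ with a probability bounded below by a \emph{universal} constant, and that it can be forced to remain in the ball $B(x,c_1 r)$ throughout $[0,r^{d_w}]$ once $c_1$ is large. The second demand is a one-line consequence of the maximal inequality (Lemma~\ref{l:supbound}); the genuinely substantive point is the first, and I would extract it from the local time of $\wt X$ at $y$, the key being that single points are non-polar for Brownian motion on the gasket.

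First I would record that $\{y\}$ is non-polar by the same finite-energy computation used in Proposition~\ref{prop:diag_nonpolar}: the $\alpha$-potential density satisfies $u_\alpha(y,y)=\int_0^\infty e^{-\alpha t}\p(t,y,y)\,dt<\infty$, since $\p(t,y,y)\le c_1 t^{-d_f/d_w}$ with $d_f/d_w=\log 3/\log 5<1$ kills the singularity at $t=0$ while $e^{-\alpha t}$ controls large $t$. Hence $\delta_y$ has finite energy, $\{y\}$ is non-polar, and (by positivity of the heat kernel) $y$ is regular for itself. Consequently $\wt X$ carries a point local time $L^y=(L^y_t)_{t\ge 0}$ with Revuz measure $\delta_y$, obeying the occupation identity $\E^x[L^y_t]=\int_0^t \p(s,x,y)\,ds$ and with $\{L^y_t>0\}=\{T_y\le t\}$, where $T_y:=\inf\{t>0:\wt X_t=y\}$. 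Because $\p(s,x,y)>0$ for every $s>0$, the first moment $\E^x[L^y_{r^{d_w}}]$ is strictly positive, and a Paley--Zygmund (second-moment) bound gives
\[
\prob^x\{T_y < r^{d_w}\} \ge \prob^x\{L^y_{r^{d_w}} > 0\} \ge \frac{\big(\E^x[L^y_{r^{d_w}}]\big)^2}{\E^x\big[(L^y_{r^{d_w}})^2\big]},
\]
where $\E^x[(L^y_t)^2]=2\int_0^t \p(s,x,y)\big(\int_0^{t-s}\p(u,y,y)\,du\big)\,ds$.

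The heart of the matter is to bound the right-hand side below by a constant $\underline q>0$ independent of $x,y,r$ and of the ambient triangle. Here I would exploit the scaling relation \eqref{e:bmscaling}, which forces $\p(t,2x,2y)=2^{-d_f}\,\p(t/5,x,y)$; a direct change of variables then shows that both moments scale homogeneously and, crucially, that the quotient above is invariant under $(x,y,r)\mapsto(2x,2y,2r)$ (one checks $\E^{2x}[L^{2y}_{5r^{d_w}}]=5\cdot 2^{-d_f}\E^x[L^y_{r^{d_w}}]$ and $\E^{2x}[(L^{2y}_{5r^{d_w}})^2]=25\cdot 2^{-2d_f}\E^x[(L^y_{r^{d_w}})^2]$, and the factors cancel in the ratio). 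It therefore suffices to bound the ratio for $r$ in a fixed range, where I would feed the two-sided sub-Gaussian heat kernel estimates of \cite{barlow88,barlow95} (the upper half being \eqref{eq:densitybound}) into the two moments. Since these estimates depend on $x,y$ only through $|x-y|=r$ and the dimensionless parameter $|x-y|^{d_w}/r^{d_w}$ equals $1$ at the time $r^{d_w}$, both moments reduce to fixed numerical integrals and the ratio is bounded below uniformly.

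Finally, the maximal inequality (Lemma~\ref{l:supbound}) applied to a single particle with threshold $c_1 r$ over time $r^{d_w}$ gives
\[
\prob^x\Big\{\sup_{0 \le s \le r^{d_w}}|\wt X_s - x| > c_1 r\Big\} \le 2c_1' \exp\!\Big(-c_2 \big(c_1^{d_w}\big)^{1/(d_w-1)}\Big) =: \eps(c_1),
\]
where the powers of $r$ cancel, so $\eps(c_1)$ depends only on $c_1$ and tends to $0$ as $c_1\to\infty$. Choosing $c_1>1$ with $\eps(c_1)<\underline q$, and noting that on the complement of the exit event the process hits $y$ (which lies in $B(x,c_1 r)$ as $c_1>1$) before time $r^{d_w}$ while never leaving $\bar B(x,c_1 r)$, I obtain
\[
\prob^x\{ \text{the event of Lemma~\ref{lem:escape_prob}} \} \ge \prob^x\{T_y < r^{d_w}\} - \eps(c_1) \ge \underline q - \eps(c_1) > 0.
\]
I expect the uniform lower bound $\underline q$ to be the main obstacle: it is precisely what makes the time coefficient exactly $1$ (rather than some large constant) attainable, and unlike the localization step it cannot be obtained by any soft compactness or continuity argument — it rests on the \emph{lower} heat kernel estimate together with the scale-invariance of the second-moment ratio.
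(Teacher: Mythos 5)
The paper contains no proof of this statement: it is imported verbatim as Lemma~3.35 of \cite{barlow95} and ``recorded for the reader's ease of reference,'' so any argument you give is necessarily a different route. Your proof is essentially correct, and in fact delivers more than is asserted: a lower bound $\underline{q}-\eps(c_1)>0$ that is \emph{uniform} in $x,y,r$, whereas the statement only claims positivity. The decomposition into a hitting estimate plus the maximal inequality, the Paley--Zygmund/Kac-moment computation, and the two scaling identities are all sound (the latter follow from $\p(t,2x,2y)=2^{-d_f}\,\p(t/5,x,y)$, which is exactly what \eqref{e:bmscaling} yields since scaling by $2$ multiplies $\mu$ by $2^{d_f}=3$ and $2^{d_w}=5$); the final subtraction is valid because $c_1>1$ places $y$ inside $B(x,c_1r)$. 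Two points should be made explicit rather than left implicit. First, the local-time machinery (existence of a point local time at $y$ satisfying the occupation identity and the second-moment formula) requires $y$ to be non-polar \emph{and regular for} $\{y\}$; your parenthetical ``by positivity of the heat kernel'' does not establish regularity --- you should either invoke the fact that semipolar sets are polar for symmetric Hunt processes, or simply cite \cite{barlow88}, where jointly continuous local times for the gasket diffusion are constructed. Second, the uniform lower bound on $\E^x[L^y_{r^{d_w}}]$ genuinely needs the \emph{lower} half of the sub-Gaussian heat-kernel estimate, which is in \cite{barlow88} but of which the paper quotes only the upper half \eqref{eq:densitybound}; with that two-sided bound in hand, the moments satisfy $\E^x[L^y_{r^{d_w}}]\ge c\,r^{d_w-d_f}$ and $\E^x[(L^y_{r^{d_w}})^2]\le C\,r^{2(d_w-d_f)}$, so the ratio is bounded below uniformly in $r$ directly, and your scaling-invariance step, while correct, is not actually needed. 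What your approach buys is a self-contained proof (modulo the Barlow--Perkins estimates and standard local-time theory) and a quantitative uniform constant; what the paper's citation buys is brevity.
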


\begin{lemma} \label{lem:hit_open_sets}
There exists a constant $c> 0$ such that for each point
$x \in \gD^e$, each open subset $ U \subset \gD^e$, and each time
$0 < t  \le c$
\[ 
\prob^x\{ \wt Z_t \in  U\} > 0.
\]
In particular, $\p_K(t, x, y) > 0$ for all $x, y \in \gD^e$ and $0< t \le c$.
\end{lemma}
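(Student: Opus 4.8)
The plan is to reduce the lemma to a single pointwise statement about the killed density, namely that $\p_K(t,x,y) > 0$ for \emph{all} $x,y \in \gD^e$ and all $0 < t \le c$ (indeed for all $t>0$). This suffices: since $\mu$ has full support, a nonempty open $U \subseteq \gD^e$ has $\mu(U) > 0$, so $\prob^x\{\wt Z_t \in U\} = \int_U \p_K(t,x,y)\,\mu(dy) > 0$, and the case $x \in \gD$ is a special case of $x \in \gD^e$. Two geometric remarks drive the argument. First, because distinct $0$-triangles of $\G$ meet only at vertices and the extended triangle $\gD^e$ engulfs the three vertices of $\gD$, one has $\gD \subset \operatorname{int}(\gD^e)$ with $\inf_{x \in \gD}\operatorname{dist}(x,\partial \gD^e) =: \rho_0 > 0$, while every $x \in \gD^e$ satisfies $\operatorname{dist}(x,\partial\gD^e) > 0$. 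Second, if the free motion $\wt X$ stays within distance $\operatorname{dist}(x,\partial\gD^e)$ of its starting point then it has not been killed, so on such events $\wt Z$ and $\wt X$ coincide.

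The heart of the matter is a near-diagonal lower bound: I claim there is a function $u_0(\rho,\delta) > 0$ such that whenever $u,v \in \gD^e$ lie at distance $\ge \rho$ from $\partial\gD^e$, $r := |u-v|$ satisfies $c_1 r \le \rho$, and $r^{d_w} < s \le u_0(\rho,\delta)$, then $\prob^u\{\wt Z_s \in B(v,\delta)\} > 0$. To see this, apply Lemma~\ref{lem:escape_prob} to $\wt X$ started at $u$: with positive probability the event $A$ that $\wt X$ hits $v$ at some time $H_v < r^{d_w}$ while staying within $c_1 r \le \rho$ of $u$ occurs, and on $A$ the path remains in $\gD^e$ up to $H_v$, so $\wt Z_{H_v} = v$ with $\wt Z$ not yet killed. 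The strong Markov property at $H_v$ then reduces matters to bounding $\prob^v\{\wt Z_w \in B(v,\delta)\}$ from below, uniformly for $w = s - H_v \in (0,s]$. If the free motion from $v$ stays within $\min(\delta,\rho)$ of $v$ up to time $w$ it lies in $B(v,\delta)$ and has not exited $\gD^e$; by the maximal inequality, Lemma~\ref{l:supbound}, this probability is at least $1 - 2c_1\exp(-c_2(\min(\delta,\rho)^{d_w}/w)^{1/(d_w-1)})$, which is $\ge \tfrac12$ once $w \le s \le u_0(\rho,\delta)$ with $u_0$ chosen small. Hence $\prob^u\{\wt Z_s \in B(v,\delta)\} \ge \tfrac12\,\prob^u(A) > 0$.

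Next I would propagate this local positivity along a path by a space–time chaining based on the Markov property. Given $x \in \gD^e$, a nonempty open $U \subseteq \gD^e$ and $t > 0$, pick $y_\ast \in U$ with $B(y_\ast,\delta) \subseteq U$, join $x$ to $y_\ast$ by a path $\gamma$ lying in $\operatorname{int}(\gD^e)$, hence bounded away from $\partial\gD^e$, and partition $[0,t]$ into $k$ equal intervals of length $t/k$; place points $x = z_0, z_1, \dots, z_k = y_\ast$ along $\gamma$ with consecutive spacing $r_i < (t/k)^{1/d_w}$, so that $r_i^{d_w} < t/k$ and, for $k$ large, both $c_1 r_i$ is below the local distance to $\partial\gD^e$ and $t/k \le u_0$. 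Covering a path of fixed length $L$ by such steps requires $k^{1-1/d_w} t^{1/d_w}$ to exceed a constant multiple of $L$, which is satisfiable for large $k$ precisely because $d_w > 1$. This balancing of the deterministic time constraint $r^{d_w} < s$ against the need to traverse $\gamma$ in finitely many steps is the main obstacle, and it is exactly what the walk dimension exceeding $1$ resolves. Applying the near-diagonal bound at each step and multiplying via the Markov property yields $\prob^x\{\wt Z_t \in B(y_\ast,\delta)\} > 0$, hence $\prob^x\{\wt Z_t \in U\} > 0$, for every nonempty open $U \subseteq \gD^e$ and every $x \in \gD^e$.

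Finally I would upgrade this to pointwise positivity of the density. Fix $x,y \in \gD^e$ and $t>0$, and set $s = t/2$. By continuity of $z \mapsto \p_K(s,x,z)$ the set $V_x := \{z \in \gD^e : \p_K(s,x,z) > 0\}$ is open, and the open-set statement just proved shows that $V_x$ meets every nonempty open subset of $\gD^e$, so $V_x$ is open and dense; likewise $V_y$. Since $\gD^e$ is a Baire space, $V_x \cap V_y$ is a nonempty open set and hence has positive $\mu$-measure, so the symmetry $\p_K(s,z,y) = \p_K(s,y,z)$ together with Chapman--Kolmogorov give $\p_K(t,x,y) = \int_{\gD^e}\p_K(s,x,z)\,\p_K(s,y,z)\,\mu(dz) \ge \int_{V_x \cap V_y}\p_K(s,x,z)\,\p_K(s,y,z)\,\mu(dz) > 0$. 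This holds for all $x,y \in \gD^e$ and all $t>0$; in particular it holds for $0 < t \le c$ with any fixed $c > 0$, and both assertions of the lemma follow.
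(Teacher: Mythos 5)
Your proof is correct, and while it runs on the same engine as the paper's --- Barlow's hitting estimate (Lemma~\ref{lem:escape_prob}) chained along a path inside $\gD^e$, with $d_w>1$ being exactly what makes the chaining fit into an arbitrarily small total time --- it is organized quite differently and proves a little more. The paper decomposes the event into three steps glued by the strong Markov property: confine the path near $x$ inside $\gD^e$ for a macroscopic time $c$ using \cite[Equation 3.11]{barlow95} (this citation is where the constant $c$ in the statement comes from), hit a chosen point $y\in U$ quickly by chaining Lemma~\ref{lem:escape_prob} along a geodesic, and then sit in $U$ for a short time by right-continuity; so the process dawdles near $x$ for most of $[0,t]$ and does all the hitting in the final instants. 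You instead package fast hitting (Lemma~\ref{lem:escape_prob}) and dawdling (the maximal inequality, Lemma~\ref{l:supbound}) into a single reusable near-diagonal space--time estimate and chain it over $k$ equal time slices, distributing the waiting across the steps; you then upgrade open-set positivity to pointwise positivity of $\p_K$ via a Baire-category/symmetry/Chapman--Kolmogorov argument. What your route buys: you avoid the external confinement input from \cite{barlow95}, so no distinguished constant $c$ appears and the conclusion holds for all $t>0$ rather than only $t\le c$; and you make explicit the derivation of the ``in particular'' clause, which the paper leaves implicit in its three-step outline. What the paper's route buys is brevity: the dawdling estimate is invoked once rather than at every chain step, and no density manipulations are needed for the open-set statement. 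The only points your sketch glosses are routine: in the chaining, the near-diagonal bound is applied from a random starting point in a small ball around $z_{i-1}$ rather than from $z_{i-1}$ itself, so the ball radii must be taken small relative to the spacing $(t/k)^{1/d_w}$ (your constants absorb this), and Chapman--Kolmogorov for $\p_K$ holds pointwise by the stated continuity and boundedness of the killed densities.
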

\begin{proof} The following three steps combined with  
the strong Markov property establish the lemma.

\noindent
{ \bf Step 1.}  There exists a constant $c > 0$ such that starting from $x \in \gD^e$, the
unkilled Brownian motion on the infinite gasket
$\wt X$ will stay within $\gD^e$ up to time $c$ with positive probability.

\noindent
{ \bf Step 2.}  
 Fix $y \in U$. For all sufficiently small  $ \eta > 0$,
\[
\prob^y \{\wt X \text{ does not exit $U$ before time } \eta\} > 0.
\]

\noindent
{ \bf Step 3.}  For any $\gd >0$, $z, y \in \gD^e$ 
\[ \prob^z\{ \wt Z \text{ hits $y$  before } \gd \} > 0.\]

Consider Step 1. Note that if  $x \in \G$, then (see  \cite[Equation 3.11]{barlow95}) there exists a constant $c> 0$ such that for the unkilled process $\wt X$, we have, 
\[ \prob^x\{ |\wt X_t -x| \le 1/4 \  \text{ for } t \in [0, c] \} > 0 .\]
But if $x \in \gD^e$, then 
\[ \prob^x\{ \wt X_t  \in \gD^e \  \text{ for } t \in [0, c]\} 
\ge 
\prob^x\{ |\wt X_t -x| \le 1/4 \ \text{ for } t \in [0, c]\}, \]
and the claim follows.

Step 2 is obvious from the right
continuity of the paths of the killed Brownian motion $\wt Z$ at time $0$.

Consider Step 3. Fix $z, y \in \gD^e$ and $0 < \gd \le |z- y|$.  Let $\scal_n$ be the $n$-th approximating graph of $\G$ with the set of vertices  $\vcal_n$. 
 Choose $n$ large enough so that we can find points $z_0$ and $y_0$ in $\vcal_n$ close to $z$ and $y$ respectively so that
 \[ 
 |z - z_0| \le \frac {\gd}{3}, \ \  |y - y_0| \le \frac {\gd}{3}  
 \] 
 and
\[  
B(z, c_1|z - z_0| ) \subseteq \gD^e, \ \ B(y_0, c_1|y - y_0| ) \subseteq \gD^e  \]
where $c_1$ is as in Lemma \ref{lem:escape_prob} and the notation $B(u,r)$ 
denotes the intersection with the infinite gasket $\G$
of the closed ball in the plane of radius $r$ around the point $u$.

The length of a shortest path $\gc$ lying $\scal_n$ between $z_0$ and $y_0$ is the
same as  their distance in the original metric  $\rho_{\wt G}(z_0, y_0)$. 
Moreover, for any two points $p$ and $p'$ on $\gc$, 
the length of the segment of $\gc$ between $p$ and $p'$ is the
same as their distance in the original metric $\rho_{\wt G} (p, p')$.

Thus, we can choose $m+1$ equally spaced points $z_0 , z_1, \ldots, z_m = y_0$ on $\gc$ such that
\[  \rho_{\wt G}(z_{i+1}, z_{i} )  = \frac 1{m} \rho_{\wt G}(z_0, y_0)  \quad \text{ for each } i. \]

Since $\gc$ is compact, $\text{dist}(\gc, \partial \gD^e ) > 0$. Thus we can choose $m$ large so that
\[  B(z_{i}, c_1| z_{i+1} - z_i| ) \subseteq \gD^e  \quad \text{ for each } i. \]

By repeated application of Lemma \ref{lem:escape_prob} and the strong Markov property, we conclude that the probability that $\wt Z$ hits $y$ starting from $z$ before the time
\[
T_m := |z - z_0|^{d_w}  + |y - y_0|^{d_w}   + \sum_{i=0}^{m-1} |z_{i+1} - z_{i} |^{d_w}
\]
is strictly positive. Step 3 follows  immediately  since 
\[
T_m \le \left( \frac \gd 3 \right)^{d_w} + \left( \frac \gd 3 \right)^{d_w} + \mathrm{constant} \times m \times  \frac 1{m^{d_w}} |z_0 - y_0|^{d_w} \le \gd
\]
 for $m$ sufficiently large, because $d_w > 1$. 
 \end{proof}

 \begin{lemma} \label{lem:cont}
 Let $\wt Z'$ and $\wt Z''$ be two independent copies of 
 the killed Brownian motion $\wt Z$. For any $0< \gd < \gb$, 
 the map 
 \[ (x, y) \mapsto  \prob^{(x, y) }\{ \wt Z_t'  =  \wt Z_t''  \ \ \text{ for some } t \in (\gd, \gb) \}\] 
 is continuous on $\gD^e \times \gD^e$.
 \end{lemma}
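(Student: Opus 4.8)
The plan is to prove continuity of the map
\[
(x,y) \mapsto \prob^{(x,y)}\{\wt Z_t' = \wt Z_t'' \text{ for some } t \in (\gd, \gb)\}
\]
by expressing this hitting probability as an expectation that depends continuously on $(x,y)$ through the transition densities $\p_K$, which we have already arranged to be jointly continuous and strictly positive for small times. First I would split the target event at a small intermediate time. Choose $0 < \gd' < \gd$ and condition on the positions of the pair at time $\gd'$: by the Markov property,
\[
\prob^{(x,y)}\{\wt Z_t' = \wt Z_t'' \text{ for some } t \in (\gd,\gb)\}
= \int_{\gD^e}\int_{\gD^e} \p_K(\gd', x, u)\,\p_K(\gd', y, v)\, h(u,v)\, \mu(du)\,\mu(dv),
\]
where
\[
h(u,v) := \prob^{(u,v)}\{\wt Z_t' = \wt Z_t'' \text{ for some } t \in (\gd-\gd', \gb-\gd')\}.
\]
The point of this rewriting is that all the dependence on the starting points $(x,y)$ has been moved into the densities $\p_K(\gd', x, u)$ and $\p_K(\gd', y, v)$, and these are continuous in $(x,y)$ by the properties recorded just before Lemma~\ref{l:killed_hitting}.

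The continuity of the right-hand side in $(x,y)$ would then follow from dominated convergence, provided I can justify passing the limit inside the double integral. For domination I would use the third bullet property $\p_K(t,x,y) \le \p(t,x,y)$ together with the heat kernel bound \eqref{eq:densitybound}, which gives an integrable majorant in $(u,v)$ uniformly for $x,y$ ranging over any compact subset of $\gD^e$; since $h$ is bounded by $1$, the integrand is dominated by $\p(\gd', x, u)\,\p(\gd', y, v)$, whose integral against $\mu \otimes \mu$ is finite by the volume growth estimate \eqref{eq:volgrowth} and Lemma~\ref{lem:int}. Pointwise continuity of $(x,y)\mapsto \p_K(\gd',x,u)\p_K(\gd',y,v)\,h(u,v)$ for $\mu\otimes\mu$-almost every $(u,v)$ is immediate from the continuity of the densities, and $h$ does not depend on $(x,y)$ at all. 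Hence dominated convergence applies and the integral is continuous.

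The main obstacle is the measurability and integrability bookkeeping around the function $h$: one must check that $h$ is a genuine Borel function on $\gD^e \times \gD^e$ so that the Markov decomposition is legitimate and the integral is well-defined. This is the kind of point where the event ``collision in an open time interval $(\gd-\gd',\gb-\gd')$'' must be shown to depend measurably on the starting pair, which follows from the right-continuity of the paths of $\wt Z$ and the fact that the collision event can be written as a countable union/intersection over rational times of the measurable events $\{\wt Z_t' \text{ and } \wt Z_t'' \text{ are within } 1/k\}$. Once $h$ is known to be bounded and measurable, the rest is the routine dominated-convergence argument sketched above, so I expect the measurability of $h$ and the verification of the integrable domination to be the only genuinely substantive steps.
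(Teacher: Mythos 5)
Your proposal is correct and follows essentially the same route as the paper: the paper likewise applies the Markov property at a fixed time (it uses $\gd$ itself rather than an intermediate $\gd' < \gd$) to write the hitting probability as an integral of $\p_K(\gd,x,\cdot)\,\p_K(\gd,y,\cdot)$ against the collision probability over the residual interval, and then invokes continuity of $z \mapsto \p_K(\gd,z,z')$. Your explicit dominated-convergence justification (via $\p_K \le \p$, the heat kernel bound, and finiteness of $\mu$ on $\gD^e$) and the measurability remark simply fill in details the paper leaves implicit.
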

 
\begin{proof}
We have
\[
\begin{split}
&\prob^{(x, y) }\{ \wt Z_t'  =  \wt Z_t''  \ \ \text{ for some } t \in (\gd, \gb) \} \\
& \quad = 
\int_{\gD^e} \int_{\gD^e}
\p_K(\delta, x, x') \, \p_K(\delta, y, y')  \\
& \qquad \times \prob^{(x', y') }\{ \wt Z_t'  =  \wt Z_t''  \ \ \text{ for some } t \in (0, \gb -\gd) \}  
\, \mu(dx') \mu(dy'), \\
\end{split}
\]
and the result follows from the continuity of $z \mapsto \p_K(\delta,z,z')$
for each $z' \in \gD^e$.
\end{proof}

\begin{proof} [Proof of Theorem \ref{l:lower_bdd_hitting_prob}]

For any $x, y \in \gD$, 
\begin{equation}
\label{eq:positive_everywhere}
\begin{split}
& \prob^{(x, y) }\{ \wt Z_t'  =  \wt Z_t''  \text{ for some } t \in (\gd, \gb) \} \\
& \quad = \int_{\gD^e} \int_{\gD^e}  
  \p_K(\gd/2, x, x') \p_K(\gd/2, y, y') \\
& \qquad \times \prob^{(x', y') }\{ \wt Z_t'  =  \wt Z_t''  \text{ for some }  t \in (\gd/2, \gb  -\gd/2) \} 
 \, \mu(dx') \, \mu(dy') > 0, \\
\end{split}
\end{equation}
by Lemmas \ref{l:killed_hitting}, \ref{lem:hit_open_sets} and \ref{lem:cont}.

Applying Lemma  \ref{lem:cont} and equation \eqref{eq:positive_everywhere}
and the fact that a continuous function achieves its minimum on a compact set,  we have
for any $\gD \in \wt \tcal_0$ that
\[ 
\underline{q}(\gD) := \inf_{x, y \in \gD} \prob^{(x, y) }\{ \wt Z_t'  =  \wt Z_t''  \text{ for some } t \in (0, \gb) \} > 0.   
\]
Note that  for any two $\gD_1, \gD_2 \in \wt \tcal_0$ which do not contain the origin,  there exists a {\em local isometry} between the corresponding extended triangles $\gD_1^e, \gD_2^e$. Since the unkilled Brownian motion $\wt X$ in $\G$ is invariant with respect to local isometries,   
\[ \underline{q}(\gD_1) = \underline{q}(\gD_2).\]
Given two independent copies $\wt X'$ and $\wt X''$ of $\wt X$, set
\[ 
\underline{p} := \inf_{ \gD \in \wt \tcal_0} \inf_{x, y \in \gD} \prob^{(x, y) }\{ \wt X_t' =  \wt X_t'' \text{ for some } t \in (0, \gb) \}.
\] 
The above observations enable us to conclude that
$\underline{p}  >  0$.

For the infinite gasket, if $\gD \in \wt \tcal_n$, then $2^n \gD \in \wt \tcal_0$ and the scaling property of Brownian motion on the infinite gasket gives us that for any $\gD \in \wt \tcal_n$
\[
\begin{split}
& \inf_{x, y \in \gD} \prob^{(x, y) }\{ \wt X_t'  =  \wt X_t''  \text{ for some } t \in (0, 5^{-n} \gb) \}\\  
& \quad = \inf_{x, y \in 2^n \gD} \prob^{(x, y) }\{ \wt X_t'  =  \wt X_t''  \text{ for some } t  \in (0,\gb ) \}. \\ 
\end{split}
\]
Therefore, for any $\gD \in \wt \tcal_n$ and any $x, y \in \gD$, 
\begin{equation}\label{key_lowerbound}
 \prob^{(x, y) }\{ \wt X_t'  =  \wt X_t'' \text{ for some } t \in (0, 5^{-n} \gb) \} \ge \underline{p}.
 \end{equation}
\end{proof}

\begin{corollary}
The Brownian motions $\wt X$ and $X$ on the infinite and finite gaskets
both satisfy Assumption~\ref{ass:immediate_hit}.
\end{corollary}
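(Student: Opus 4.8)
The plan is to verify Assumption~\ref{ass:immediate_hit} for the infinite-gasket process $\wt X$; the finite-gasket statement for $X$ then follows by running the identical argument on $G$ with Theorem~\ref{l:lower_bdd_hitting_prob} and Lemma~\ref{l:supbound}(a) replaced by Corollary~\ref{cor:lower_bdd_hitting_prob} and Lemma~\ref{l:supbound}(b) (or, alternatively, by transferring through the Folding Lemma~\ref{infinite_to_finite_gasket}). So fix $t_0 > 0$ and $x' \in \wt G$, let $\wt X',\wt X''$ be independent copies started at $x',x''$, and let $T := \inf\{t > 0 : \wt X_t' = \wt X_t''\}$ be the collision time. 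We must show $\prob^{x',x''}\{T \le t_0\} \to 1$ as $x'' \to x'$. Note that Theorem~\ref{l:lower_bdd_hitting_prob} alone already gives $\liminf_{x''\to x'}\prob^{x',x''}\{T\le t_0\}\ge \underline p$, since for $x''$ close enough to $x'$ the two points lie in a common $n$-triangle with $\gb 5^{-n} \le t_0$; the entire content is to boost this lower bound of $\underline p$ up to $1$.

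The mechanism is a scheme of repeated collision attempts driven by Theorem~\ref{l:lower_bdd_hitting_prob}. I would build an increasing sequence of stopping times $0 = R_0 \le R_1 \le \cdots$ as follows. If the pair has not yet collided by time $R_j$ and the two particles lie at Euclidean distance $r_j$, then they lie in a common $m_j$-triangle with $2^{-m_j}$ comparable to $r_j$ (coarsening the scale by a bounded number of levels when the two particles happen to straddle a triangle boundary). Set $R_{j+1} := R_j + \gb 5^{-m_j}$. Applying Theorem~\ref{l:lower_bdd_hitting_prob} at the random but $\fcal_{R_j}$-measurable scale $m_j$, together with the strong Markov property from Lemma~\ref{l:strong_markov}, each attempt produces a collision in $(R_j, R_{j+1}]$ with conditional probability at least $\underline p$, \emph{uniformly} in $j$ and in the configuration. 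Chaining the attempts then yields, for every $N$,
\[
\prob^{x',x''}\{T > R_N\} \le (1-\underline p)^N .
\]

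The main obstacle is that $R_N$ is random: the attempt lengths $\gb 5^{-m_j}$ grow when the particles drift apart, so one must still show that order $N_0 \to \infty$ attempts fit inside the fixed horizon $[0,t_0]$. Here the maximal inequality of Lemma~\ref{l:supbound} is the key tool, exploiting the relation $2^{d_w}=5$ (so that a displacement of size $2^{-m}$ is the natural scale for a time interval of length $5^{-m}$): during an attempt of length $\gb 5^{-m_j}$ a particle moves more than $x\cdot 2^{-m_j}$ only with probability at most $2 c_1 \exp\!\big(-c_2 (x^{d_w}/\gb)^{1/(d_w-1)}\big)$, a bound independent of $m_j$. Consequently the separation can grow by at most a random factor with exponential tails from one attempt to the next, so that $\log r_{j} \le \log r_0 + \sum_{i<j} V_i$ with $r_0 = |x'-x''|$ and $(V_i)$ dominated by i.i.d.\ variables having finite exponential moments; hence $\gb 5^{-m_j} \le C\gb\, r_0^{d_w} \exp\!\big(c\sum_{i<j} V_i\big)$. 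A standard large deviation estimate for $\sum_{i<N_0} V_i$ then shows that, choosing $N_0 = N_0(r_0) \sim c_*\log(1/r_0) \to \infty$ as $x'' \to x'$, the first $N_0$ attempts are completed by time $t_0$ with probability tending to $1$.

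Combining the two bounds, and using that $\{T > t_0\} \cap \{R_{N_0}\le t_0\} \subseteq \{T > R_{N_0}\}$, gives
\[
\prob^{x',x''}\{T > t_0\} \le (1-\underline p)^{N_0} + \prob^{x',x''}\{R_{N_0} > t_0\} \longrightarrow 0
\]
as $x'' \to x'$, which is the assertion of Assumption~\ref{ass:immediate_hit}. The same scheme, verbatim, handles the finite gasket via Corollary~\ref{cor:lower_bdd_hitting_prob} and Lemma~\ref{l:supbound}(b). I expect the large deviation control of the separation in the third paragraph to be the genuinely delicate step, since the uniform bound $\underline p$ only guarantees a collision over a window whose length scales with the current separation, so one must rule out the particles spreading quickly enough to consume the budget $[0,t_0]$ before $N_0 \to \infty$ attempts can be made; the identification of a common $m_j$-triangle at each $R_j$ is a subsidiary technical point, dispatched by the bounded coarsening mentioned above. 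One may also streamline the bookkeeping by first using the scaling property~\eqref{e:bmscaling} to reduce to starting points at distance of order one and a diverging time horizon, though this merely repackages the same spreading estimate.
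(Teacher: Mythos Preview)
Your scheme of iterated collision attempts, each succeeding with probability at least $\underline p$ by Theorem~\ref{l:lower_bdd_hitting_prob}, together with a large-deviation control on the growth of the inter-particle separation via Lemma~\ref{l:supbound}, is a valid route to Assumption~\ref{ass:immediate_hit}; the bookkeeping you flag (i.i.d.\ stochastic domination of the multiplicative growth factors, the bounded coarsening needed to locate a common $m_j$-triangle) can indeed be carried out, and the scaling identity $2^{d_w}=5$ makes the per-attempt displacement bound scale-free exactly as you note.

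The paper, however, takes a much shorter and qualitatively different route. It first observes that Theorem~\ref{l:lower_bdd_hitting_prob} applied at the diagonal point $(x',x')$ gives $\prob^{(x',x')}\{\exists\,t\in(0,\epsilon):\wt X_t'=\wt X_t''\}\ge\underline p>0$ for every $\epsilon>0$; since this event lies in the germ $\sigma$-field of the Feller pair $(\wt X',\wt X'')$, the Blumenthal zero--one law forces the probability to equal $1$. The continuity result of Lemma~\ref{lem:cont} (whose proof, via the integral representation against the jointly continuous transition densities, transfers verbatim to the unkilled process) then propagates this to nearby starting points $(x',x'')$. Your approach is more laborious but has the virtue of being quantitative: it yields an explicit rate of the form $\prob^{x',x''}\{T>t_0\}\le(1-\underline p)^{c_*\log(1/|x'-x''|)}+o(1)$, which the zero--one law argument does not provide. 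The paper's approach, by contrast, avoids any spreading estimate or large-deviation input and settles the matter in two lines once the zero--one law is recognized as applicable.
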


\begin{proof}
By Theorem~\ref{l:lower_bdd_hitting_prob} and the
Blumenthal zero-one law, we have for
two independent Brownian motions $\wt X'$ and $\wt X''$ on $\wt G$
and any point $(x,x) \in \wt G \times \wt G$ that
\[
\prob^{(x, x) }\{\text{for all $\epsilon > 0$,  $\exists \ 0 < t < \epsilon$ such that
$\wt X_t'  =   \wt X_t''$  }\} = 1.
\]
Lemma~\ref{lem:cont} then gives the claim for $\wt X$.  The proof
for $X$ is similar.
\end{proof}

\section{Instantaneous coalescence on the gasket} 

We will establish the following three results in this section
after obtaining some preliminary estimates.

\begin{theorem}[Instantaneous Coalescence]\label{thm_main1_gasket} 
(a) Let $\Xi$ be the set-valued coalescing Brownian motion process  on $\wt G$
with $\Xi_0$ compact.  Almost surely, $\Xi_t$ is a finite set for all $t>0$.

\noindent
(b) The conclusion of part (a) also holds for the 
set-valued coalescing Brownian motion process on $G$
\end{theorem}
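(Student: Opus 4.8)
The plan is to adapt the pigeonhole-plus-large-deviation strategy sketched in the introduction for the circle to the compact gasket $G$, using the collision-time estimate of Corollary~\ref{cor:lower_bdd_hitting_prob} (which supplies the uniform lower bound $\underline p$ on the collision probability for two particles sharing an $n$-triangle, within time $\beta 5^{-n}$) in place of the scaling estimate on $\cir$. First I would reduce to controlling the number of surviving particles when we start from a large but finite configuration: by Proposition~\ref{p:inv_dense_set} and Lemma~\ref{l:inv_dist} the law of $\Xi$ depends only on the closure of the initial set, so it suffices to bound, for each fixed $t>0$ and each threshold $m$, the probability that more than $m$ particles survive at time $t$ when starting from $N$ particles, and then let $N\to\infty$ followed by $m\to\infty$. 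The key quantity is $\tau^{n+1}_n$, the first time the system drops from $n+1$ to $n$ surviving particles, and the goal is a bound of the form $\E[\tau^{n+1}_n]\le C\,5^{-n\,d_f/d_w}$ or similar, summable in $n$.

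The main step is the pigeonhole estimate on the gasket. Given $n+1$ distinct particles, I want to find a single pair sharing a common $k$-triangle with $k$ not too small; counting the $3^k$ triangles in $\tcal_k$ and invoking the pigeonhole principle, as soon as $n+1>3^k$ there must be two particles in the same $k$-triangle, so one can take $k$ with $3^k\approx n$, i.e. $5^{-k}\approx n^{-d_w/d_f}$ (using $d_f=\log 3/\log 2$, $d_w=\log 5/\log 2$, so $5^{-k}=3^{-k\,d_w/d_f}\approx n^{-d_w/d_f}$). By Corollary~\ref{cor:lower_bdd_hitting_prob} these two particles collide in isolation within time $\beta 5^{-k}$ with probability at least $\underline p$, hence in the coalescing system at least one collision occurs in $[0,\beta 5^{-k}]$ with probability at least $\underline p$. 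Repeating via the strong Markov property (Lemma~\ref{l:strong_markov} and its corollary) gives the geometric tail
\[
\prob\{\tau^{n+1}_n\ge j\,\beta 5^{-k}\}\le (1-\underline p)^{j},
\]
whence $\E[\tau^{n+1}_n]\le C\,5^{-k}\le C'\,n^{-d_w/d_f}$. Since $d_w/d_f=\log 5/\log 3>1$, the sum $\sum_{n\ge m}\E[\tau^{n+1}_n]$ converges and is $O(m^{1-d_w/d_f})$, so Markov's inequality yields $\prob\{\#\Xi_t>m\}\le (C''/t)\,m^{1-d_w/d_f}\to 0$ as $m\to\infty$, proving part (a). Part (b) then follows at once from the Folding Lemma~\ref{infinite_to_finite_gasket}: the projection $\psi\circ\wt X$ has the law of $X$, and since $\psi$ is $1$-Lipschitz, collisions of the lifted particles on $\wt G$ force collisions of their projections on $G$, so the image coalescing system on $G$ has no more surviving particles than the system on $\wt G$; alternatively, one runs the identical argument directly on $G$ using Corollary~\ref{cor:lower_bdd_hitting_prob}.

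The point I expect to be the main obstacle is the non-compactness issue flagged in the introduction: while waiting for the engineered pair to collide, the other particles may wander, so the naive single-pair pigeonhole argument that works on the compact $\cir$ needs care to transplant to $\wt G$ — for part (a) I would either exploit that $\Xi_0$ is \emph{compact} (so initially all particles lie in a bounded region, a finite union of $0$-triangles) together with the maximal inequality Lemma~\ref{l:supbound} to confine the particles with high probability on the relevant short time scales, or else switch, following the introduction's own remark, to the more robust version that assigns a positive fraction of particles to a collection of \emph{disjoint} suitable pairs and applies a large-deviation bound guaranteeing that with high probability a fixed proportion of these pairs collide over the time interval. The disjoint-pairs refinement is what makes the bound uniform in the spatial spread and is the technically delicate ingredient; the remaining estimates (the arithmetic with $d_f$, $d_w$ and the summability $\log 5/\log 3>1$) are routine once the collision mechanism is in place.
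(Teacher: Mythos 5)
Your write-up for part (a) is, as it stands, the single-pair pigeonhole/expected-waiting-time scheme from the paper's introduction, and that scheme is precisely what fails on the non-compact state space $\wt G$: the geometric tail $\prob\{\tau^{n+1}_n \ge j \beta 5^{-k}\} \le (1-\underline{p})^j$ requires re-applying the pigeonhole at the start of each round $j$, and on $\wt G$ the number of $k$-triangles needed to cover the current configuration grows as the particles spread, so the per-round collision probability is not uniformly bounded below and the geometric tail is unjustified. You flag this obstacle yourself, but the two remedies you offer are left as alternatives rather than carried out, and the first one (confining the particles via the maximal inequality, Lemma~\ref{l:supbound}) does not survive the limit $N \to \infty$: the failure probability in that lemma carries a factor of the number of particles, so confinement of all $N$ particles over a fixed time horizon becomes vacuous for arbitrarily large finite approximations of the compact set $\Xi_0$. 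Repairing this forces one to reduce the particle count by a constant \emph{factor} per stage, with error probability exponentially small in the current count, over time windows and spatial spreads that shrink geometrically from stage to stage --- which is exactly the disjoint-pairs/Hoeffding iteration of the paper's Lemmas~\ref{l:gasket_main} and~\ref{l:gasket_intermediate}, where the induction must track \emph{two} things at once: the stopping time $\tau^A_{\lceil n \gamma^{-1}\rceil}$ and the range $\rcal(A;\cdot)$ remaining inside a fattening contained in an extended triangle, so that Theorem~\ref{l:lower_bdd_hitting_prob} stays applicable at the next stage. That two-variable induction is the real content of the proof and is missing from your proposal. (For part (b) alone your single-pair argument is correct and even simpler than the paper's: compactness of $G$ keeps the covering number fixed at $3^k$ for all times, so the introduction's circle argument transplants verbatim using Corollary~\ref{cor:lower_bdd_hitting_prob}.)

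A second, self-contained flaw: your first proposed derivation of (b) from (a) via the Folding Lemma~\ref{infinite_to_finite_gasket} is invalid. While $\psi$ does turn collisions on $\wt G$ into collisions on $G$, the image under $\psi$ of the coalescing system on $\wt G$ is \emph{not} the coalescing system on $G$: projected particles can meet (because two distinct points fold to the same point) without merging, and then separate again. Under the natural coupling one can arrange, with positive probability, that the $G$-system retains \emph{more} particles than the $\wt G$-system at a given time; for instance, with three particles, let the projections of free paths $1$ and $2$ fold together before path $3$ meets path $2$ on $\wt G$ --- then on $G$ particle $2$ is already following particle $1$ when the projection of $3$ hits the free projection of path $2$, so no merger occurs on $G$, whereas on $\wt G$ particles $3$ and $2$ do merge and may later merge with $1$. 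So there is no pathwise domination in the direction you need, and a distributional domination would itself require proof. Your stated fallback --- running the identical argument directly on $G$ with Corollary~\ref{cor:lower_bdd_hitting_prob} and Lemma~\ref{l:supbound}(b) --- is the correct route and is what the paper does.
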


\begin{theorem}[Continuity at time zero]\label{thm_main2_gasket}
(a) Let $\Xi$ be the set-valued coalescing Brownian motion process  on $\wt G$
with $\Xi_0$ compact. Almost surely, $\Xi_t$ converges to  $\Xi_0$ as $t \downarrow 0$.

\noindent
(b) The conclusion of part (a) also holds for the 
set-valued coalescing Brownian motion process on $G$.
\end{theorem}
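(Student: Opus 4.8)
The plan is to prove convergence in the Hausdorff metric by splitting it into the two one-sided inclusions: for each $\eps>0$, almost surely \emph{(inner bound)} $\Xi_0\subseteq\Xi_t^{\eps}$ for all small $t>0$, and \emph{(outer bound)} $\Xi_t\subseteq\Xi_0^{\eps}$ for all small $t>0$. Two preliminary observations are used throughout. First, each coalescing path $\zeta_i$ is continuous with $\zeta_i(0)=\xi_i(0)$: it coincides with the continuous free path $\xi_i$ up to $\tau_i$ and with $\zeta_{J_i}$ thereafter, the two agreeing at $\tau_i$, so continuity follows by induction on $i$ from the continuity of Brownian motion on the gasket. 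Second, by Theorem~\ref{thm_main1_gasket} each $\Xi_t$ with $t>0$ is finite, hence compact, so $d_H(\Xi_t,\Xi_0)$ is well defined; since the outer bound confines $\Xi_t$ to the fixed compact set $\Xi_0^{\eps}$, the metric $d_C$ used on the non-compact gasket is comparable there to $d_H$, and it suffices to argue with $d_H$.

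For the inner bound I would use finite approximation. Let $\Xi^{(n)}$ be the coalescing system driven by the first $n$ free particles; as noted after the definition of the collision rule, $\Xi^{(n)}_t=\overline{\{\zeta_1(t),\dots,\zeta_n(t)\}}\subseteq\Xi_t$, while $\Xi^{(n)}_0$ increases to a dense subset of $\Xi_0$ and hence $\Xi^{(n)}_0\to\Xi_0$ in $d_H$. Given $\eps>0$, fix $n$ so that $\Xi^{(n)}_0$ is an $\tfrac{\eps}{2}$-net of $\Xi_0$. Because $\zeta_1,\dots,\zeta_n$ are finitely many continuous paths with $\zeta_i(t)\to\xi_i(0)$, for all small $t$ each $\xi_i(0)$ with $i\le n$ lies within $\tfrac{\eps}{2}$ of $\zeta_i(t)\in\Xi_t$; together with the net property this gives $\Xi_0\subseteq\Xi_t^{\eps}$.

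The outer bound is the crux, and I expect it to be the main obstacle. The naive inclusion $\Xi_t\subseteq\overline{\{\xi_k(t):k\}}$ is worthless: the free particles start densely in the compact $\Xi_0$, and the supremum of the displacements of countably many independent Brownian motions is already infinite at every positive time, so individual free particles wander macroscopically at once. The correct reduction is to the \emph{surviving leaders}: if $z\in\Xi_t$ then $z=\zeta_k(t)$ for some $k$ with $\tau_k>t$, and such a $k$ satisfies $\zeta_k(s)=\xi_k(s)$ for all $s\le t$, since the instant $\xi_k$ meets a lower-ranked cluster it ceases to be a representative. Writing $R_t$ for the (random, finite) set of these leaders, $\sup_{z\in\Xi_t}d(z,\Xi_0)\le\sup_{k\in R_t}\sup_{s\le t}d(\xi_k(s),\xi_k(0))$. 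The difficulty is that $R_t$ is strongly correlated with large displacement — a particle that has wandered far is precisely one that has not yet met a lower-ranked cluster — so a direct union bound over $R_t$ is unavailable, and a per-cell maximal inequality also fails because each cell still contains infinitely many particles.

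I would resolve this in two stages. First, a telescoping step reduces the almost-sure statement as $t\downarrow 0$ to a single-time statement in probability. Along $s_n:=\gb 5^{-n}$ one has, for $s<t$, that $\sup_{z\in\Xi_t}d(z,\Xi_s)$ is at most the displacement $\sup_{i\in R_s}\sup_{s\le r\le t}d(\zeta_i(r),\zeta_i(s))$ of only the finitely many particles present at time $s$; applying the maximal inequality of Lemma~\ref{l:supbound} to these, together with a polynomial moment bound $\E|\Xi_{s}|\le Cs^{-\kappa}$ of the type established in the proof of Theorem~\ref{thm_main1_gasket} and a Borel--Cantelli argument, controls the $d_H$-distances between the sets at consecutive scales $s_{n+1}$ and $s_n$ in a summable fashion. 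Since these increments then sum to a finite quantity whose tail vanishes, $\Phi(s_n):=\sup_{z\in\Xi_{s_n}}d(z,\Xi_0)$ converges almost surely, and it remains only to show $\Phi(s_n)\to 0$ in probability. Second, I would bound $\prob\{\Phi(s_n)\ge\eps\}$ by the expected number of leaders whose own free path has moved distance $\ge\eps$ by time $s_n$, and estimate this first moment by conditioning on a leader's path: a path realizing displacement $\ge\eps$ must, at the scale matched to $s_n$, pass through a long chain of triangles each densely populated by lower-ranked clusters, and by the collision lower bound of Theorem~\ref{l:lower_bdd_hitting_prob} the conditional probability of avoiding coalescence with all of them decays geometrically along the chain. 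This geometric decay, weighed against the volume growth \eqref{eq:volgrowth}, is exactly what defeats the positive correlation between survivorship and large displacement and forces the expected number of far leaders to zero. Finally, part (b) follows by the identical argument with the finite-gasket analogues — Corollary~\ref{cor:lower_bdd_hitting_prob}, Lemma~\ref{l:supbound}(b), and Theorem~\ref{thm_main1_gasket}(b) — or, more quickly, by pushing the conclusion through the $1$-Lipschitz folding map $\psi$ of Lemma~\ref{infinite_to_finite_gasket}, which preserves Hausdorff convergence.
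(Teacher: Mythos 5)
Your inner bound is exactly the paper's argument (finite approximation plus right-continuity of the finite coalescing system), and your telescoping Stage 1 for the outer bound is workable: the moment bound $\E[\#\Xi_s]\le Cs^{-\kappa}$ does follow from the tail estimates in the proof of Theorem \ref{thm_main1_gasket}. The genuine gap is Stage 2, the claim that the expected number of far-wandering leaders vanishes. You propose to condition on a leader's free path and argue, via Theorem \ref{l:lower_bdd_hitting_prob}, that a path with displacement $\ge\eps$ gets absorbed with conditional probability geometrically close to $1$. But Theorem \ref{l:lower_bdd_hitting_prob} is a statement about two \emph{independent, unconditioned} Brownian motions; once you condition on the realization of one of the two paths --- and, worse, on the atypical event that this path has large displacement --- the bound $\underline{p}$ says nothing. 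What you would need is a uniform lower bound for the probability that a free Brownian motion on the gasket hits a \emph{fixed} continuous path started in the same $n$-triangle. On the line such a bound follows from the intermediate value theorem; on the gasket no such estimate appears in the paper, and it is far from clear that it holds with the uniformity required over the conditioned paths. A second, independent problem: your ``chain of densely populated triangles'' treats the lower-ranked clusters as static absorbers, but a cluster switches the free path it follows each time it merges with a lower-ranked cluster, so a collision of $\xi_k$ with a free path $\xi_j$ need not be a collision with any cluster at that moment; moreover, the absorbers met along different segments of the path are built from overlapping pools of particles and are not independent, so the claimed geometric decay has no product structure to support it. The paper manufactures exactly this independence with the pairing device (the partial coalescing systems in Lemma \ref{l:gasket_main}), and your argument has no substitute for it.

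The tool that closes the gap is already at your disposal and in fact makes both of your stages unnecessary: Lemma \ref{l:gasket_intermediate} controls not only the thinning time $\tau^{A}_{\lceil\gamma^k\rceil}$ but also the \emph{range}, $\rcal(A;[0,\tau^A_{\lceil\gamma^k\rceil}])\subseteq A^{\sum_{i>k}\nu_i}$. The paper's outer bound applies this to the finite approximations $Q_m$ with $\eps$ chosen so that $\sum_i\nu_i<\delta/2$, adds the maximal inequality of Lemma \ref{l:supbound} for the $\lceil\gamma^k\rceil$ surviving particles over a time interval of length $s_{k-1}$ to get
\[
\prob\bigl\{\rcal(Q_m;[0,s_k])\not\subseteq Q^{\delta}\bigr\}\le C_3\exp(-C_2\gamma^{k/3}),
\]
lets $m\to\infty$ using the almost sure finiteness of $\Xi_s$, and finishes with Borel--Cantelli. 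This yields the almost sure outer inclusion $\Xi_t\subseteq Q^\delta$ for all small $t$ directly, with no conditioning on paths and no separate in-probability step.
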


\begin{theorem}[Instantaneous local finiteness]\label{thm:discrete_gasket}
Let $\Xi$ be the set-valued coalescing Brownian motion process on $\wt G$
with $\Xi_0$ a possibly unbounded closed set. 
Almost surely, $\Xi_t$ is a locally finite set for all $t>0$.
\end{theorem}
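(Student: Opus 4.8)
The plan is to fix a time $t_0>0$ and a compact set $K\subseteq\G$ and to prove that $\expec|\Xi_{t_0}\cap K|<\infty$; local finiteness of $\Xi_{t_0}$ then follows by exhausting $\G$ with countably many compact sets, and the statement for all $t>0$ is recovered at the end by a restart argument. To reduce to finite systems I would enumerate a countable dense sequence $x_1,x_2,\dots$ in $\Xi_0$, run $\bzeta=\Lambda\bxi$ with $\sigma=\id$, and use the observation recorded just after the definition of $\Lambda$ that $(\zeta_i)_{i\le n}$ does not depend on $(\xi_j)_{j>n}$. Consequently the sets $\{\zeta_i(t_0):i\le n\}$ increase with $n$, so writing $K^{\circ}$ for the interior of $K$ we have $|\Xi_{t_0}\cap K^{\circ}|=\sup_n|\{\zeta_i(t_0):i\le n\}\cap K^{\circ}|$ (if the right-hand side is finite the closure adds no interior point), and by monotone convergence it suffices to bound $\expec[\#\{\text{points of }\Xi_{t_0}\text{ in }K^{\circ}\}]$ for the \emph{finite} coalescing systems built from $x_1,\dots,x_n$, uniformly in $n$ and in the positions of the points.

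The core is a uniform estimate that I would establish as the main preliminary lemma: there is a constant $C=C(t_0)$ so that for every finite coalescing Brownian system on $\G$ and every $0$-triangle $\gD\in\wt\tcal_0$, the expected number of points of $\Xi_{t_0}$ lying in $\gD$ is at most $C$; covering $K$ by $O(\mu(K)\,3^{n})$ triangles of an appropriate scale then bounds $\expec|\Xi_{t_0}\cap K^{\circ}|$ and finishes the argument. To prove the uniform estimate I would run a multiscale pigeonhole-and-collision reduction. The key dimensional coincidence is that over a time of length $\gb 5^{-n}$ a single particle moves only $O(2^{-n})$ (by the maximal inequality, Lemma~\ref{l:supbound}, since $(5^{-n})^{1/d_w}=2^{-n}$), while two particles sharing an $n$-triangle collide within that time with probability at least $\underline p$ (Theorem~\ref{l:lower_bdd_hitting_prob}). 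By \eqref{eq:volgrowth} a fixed enlargement of $\gD$ meets $\asymp 3^{n}$ triangles of scale $n$, so whenever it contains $M\ge\kappa\,3^{n}$ points the pigeonhole principle produces at least $c M$ disjoint pairs sharing a common $n$-triangle; each such pair collides over the next time step with probability $\ge\underline p$, these events are governed by the independent free paths, and a Chernoff bound forces at least a fixed fraction of the pairs to collide. Choosing the scale $n\asymp\log_3 M$ at each step, the number of points in a slowly growing neighborhood of $\gD$ contracts geometrically; the total spatial spread $\sum 2^{-n}\asymp\sum M^{-1/d_f}$ and the total elapsed time $\sum\gb 5^{-n}\asymp\gb\sum M^{-d_w/d_f}$ both converge because $d_w/d_f=\log5/\log3>1$, so within time $t_0$ the count is driven down to $O(1)$ — more precisely to $O(3^{n_0})$ for the finest affordable scale $n_0$ with $\gb 5^{-n_0}\le t_0$ — while the surviving particles stay inside a fixed enlargement of $\gD$.

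The main obstacle is the influx of particles from far away: although each distant particle reaches $\gD$ only with probability exponentially small in its distance (Lemma~\ref{l:supbound}), the dense initial set puts infinitely many particles in every annulus, so a naive first-moment sum over free particles diverges and coalescence among the distant particles must be used. I would handle this by applying the same reduction in each unit-scale piece $P$ of $\G\setminus\gD$: the reduction caps the expected number of clusters emanating from $P$ at $O(1)$ independently of how many particles start in $P$, whereas the probability that such a cluster travels from $P$ to $\gD$ by time $t_0$ is at most $2c_1\exp\bigl(-c_2(\mathrm{dist}(P,\gD)^{d_w}/t_0)^{1/(d_w-1)}\bigr)$. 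Summing over pieces, grouped into annuli at distance $\asymp 2^{k}$ (each containing $\asymp 3^{k}$ pieces), gives a convergent series $\sum_k 3^{k}\exp(-c\,2^{k d_w/(d_w-1)})$, so the distant contribution to $\expec[\#\{\text{points of }\Xi_{t_0}\text{ in }\gD\}]$ is finite and the uniform bound $C(t_0)$ follows. Reconciling this region-by-region coalescence bound with the global collision rule — distant clusters may also coalesce with nearer ones, which only helps for an upper bound — is the delicate bookkeeping step. Finally, to pass from a fixed $t_0$ to all $t>0$ simultaneously, I would invoke the estimate along a countable dense set of times, use the strong Markov property of $\Xi$ to restart the system from the almost surely locally finite closed set $\Xi_{s}$ at rational times $s\downarrow 0$, and appeal to the right-continuity of $t\mapsto\Xi_t$ to fill in the remaining times.
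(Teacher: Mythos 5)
Your fixed-time machinery is essentially sound and in fact closely parallels the paper's own: your unit-scale pieces with partial coalescence inside each piece, the multiscale pigeonhole reduction, and the exponential travel bounds summed over annuli are precisely the ingredients of the paper's proof (Lemma~\ref{l:gasket_intermediate} is your uniform reduction; the covering of $\wt G$ by $0$-triangles $J_{r,\ell}$ is your decomposition into pieces). One secondary caveat: the claim that the reduction ``caps the expected number of clusters emanating from $P$ at $O(1)$'' needs more care, because on the failure events of the reduction the number of surviving particles is bounded only by the initial count of the piece, which is unbounded; to get a uniform \emph{expectation} bound you must weight the stage-$j$ survivor count $\approx \gc^{m-j}$ against the stage-$j$ failure probability $\approx \exp(-C\gc^{(m-j)/3})$ and sum. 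This is fixable; the paper sidesteps it by never taking expectations, bounding instead the \emph{probability} that the cluster emanating from $J_{r,\ell}$ ever meets $G$.

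The genuine gap is your final step, passing from a fixed time $t_0$ to all $t>0$. Local finiteness at every rational time plus right-continuity of $t\mapsto\Xi_t$ does \emph{not} imply local finiteness at all times: local finiteness is not preserved under Hausdorff limits, and the process itself furnishes a counterexample to this inference at $t=0$ --- by Theorems~\ref{thm_main1_gasket} and~\ref{thm_main2_gasket}, when $\Xi_0=G$ is the whole finite gasket, $\Xi_s$ is finite for every $s>0$ and $\Xi_s\to\Xi_0$ as $s\downarrow 0$, yet $\Xi_0$ is not locally finite. So exceptional times at which $\Xi_t$ is locally infinite, followed by instantaneous recoalescence, cannot be excluded by right-continuity. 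Restarting at rational $s$ from the locally finite set $\Xi_s$ does not help either: you would then need ``locally finite for all $t\in(s,s')$'' for a system started from a locally finite set, which is the same type of all-$t$ statement you are trying to prove (and a locally finite set can grow arbitrarily fast at infinity, so a naive union bound over its points diverges). This is exactly why the paper formulates its estimates over time intervals: it bounds $\prob\{\Xi_t^{r,\ell}\cap G\ne\emptyset \text{ for some } t\in[t_1,t_2]\}$ using the range $\rcal$, applies Borel--Cantelli over the pieces $(r,\ell)$, and invokes Theorem~\ref{thm_main1_gasket} (itself an all-$t$ statement, thanks to monotonicity of cardinality for compactly supported systems) for the finitely many contributing pieces. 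Your approach could be repaired in the same spirit --- for instance, combine your per-triangle expectation bound at time $t_1$ (which controls the growth of $\Xi_{t_1}$ in expectation) with the fact that displacements of coalescing particles over $[t_1,t_2]$ are dominated by those of free particles together with Lemma~\ref{l:supbound}, to bound $\expec\bigl[\sup_{t\in[t_1,t_2]}\#(\Xi_t\cap K)\bigr]$ --- but some such interval-based argument must be supplied; right-continuity alone cannot do this work.
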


\begin{lemma}[Pigeon hole principle]\label{l:pigeon_hole} 
Place $M$ balls in $m$ boxes and
allow any two balls to be paired off together 
if they belong to the same box. Then, the maximum number of disjoint pairs of balls possible is at least $(M-m)/2$.
\end{lemma}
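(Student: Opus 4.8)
The plan is to count the maximum number of disjoint pairs box by box. Let $M_i$ denote the number of balls placed in box $i$, so that $\sum_{i=1}^m M_i = M$. Within a single box, any two balls may be paired, so the largest number of disjoint pairs that can be formed among those $M_i$ balls is exactly $\lfloor M_i / 2 \rfloor$; pairs chosen in distinct boxes are automatically disjoint from one another. Hence the maximum total number of disjoint pairs equals $\sum_{i=1}^m \lfloor M_i/2 \rfloor$, and the task reduces to bounding this sum from below.

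First I would record the elementary inequality $\lfloor M_i/2 \rfloor \ge (M_i - 1)/2$, which holds for every nonnegative integer $M_i$ (with equality when $M_i$ is odd). Summing over all boxes then yields
\[
\sum_{i=1}^m \left\lfloor \frac{M_i}{2} \right\rfloor
\ge
\sum_{i=1}^m \frac{M_i - 1}{2}
=
\frac{1}{2}\left(\sum_{i=1}^m M_i - m\right)
=
\frac{M - m}{2},
\]
which is precisely the asserted lower bound.

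The one point to keep track of is the loss coming from boxes that contain an odd number of balls: each such box forces at most one ball to be left unpaired, and since there are only $m$ boxes, at most $m$ balls go unused in total. The remaining (at least) $M - m$ balls are then partitioned into at least $(M-m)/2$ disjoint pairs, giving an alternative, entirely combinatorial phrasing of the same estimate. I do not expect any genuine obstacle here: the statement is purely arithmetic, and the bound is sharp, being attained exactly when every box holds an odd number of balls.
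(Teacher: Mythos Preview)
Your proof is correct and essentially the same as the paper's. The paper simply observes that in an optimal pairing at most one ball per box is unpaired, so at least $M-m$ balls are paired, giving at least $(M-m)/2$ pairs; this is exactly your final ``alternative, entirely combinatorial phrasing,'' while your floor-function computation is just a slightly more formal rendering of the same idea.
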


\begin{proof}
Note that in an optimal pairing there can be at most one unpaired 
ball per box. It follows that the number of paired balls is at least $M - m$
and hence the number of pairs is at least $(M - m)/2$.   \end{proof}

Define the $\eps$-fattening  of a set $A \subseteq  \wt G$ 
to be the set $A^\eps: = \{ y \in \wt G: \exists  x \in A, |y-x| < \eps \}$.  Define the $\eps$-fattening of a set $A \subseteq G$ in $G$
similarly.  Recall the constants 
$\underline{p}$ and $\beta$ from Theorem~\ref{l:lower_bdd_hitting_prob}. Set $\gc := 1/(1 - \underline{p} /5) >1$. Given a finite subset $A$ of $\wt G$ or $G$ and a time-interval $I \subseteq \mathbb R_+$, define the random variable  $\rcal(A; I)$ to be the {\em range} of the set-valued coalescing process $\Xi$ in the finite or the infinite gasket during time $I$ with initial state $A$; that is, 
\[ 
\rcal(A;I) :=  \bigcup_{ s \in I} \Xi_s.
\]
Define a stopping time for the same
process $\Xi$ by $\tau^A_{m}  := \inf \{ t : \# \Xi_t \le m  \}$. 

\begin{lemma}\label{l:gasket_main}
(a) Let $\Xi$ be the set-valued coalescing Brownian motion process in the infinite gasket with $\Xi_0 = A$,
where $A \subset  \wt G$ of cardinality $n$ such that $A^\eps $ for some $\eps > 0$ is contained in an extended triangle $\gD^e$ of $\G$.   Then, there exist constants $C_1$ and $C_2$ which may depend on $\eps$ but are independent of $A$ such that
\begin{equation}
\label{eq:gasket_unbdd}
\begin{split}
&\prob \left \{ \tau^A_{\lceil n\gc^{-1} \rceil } >  25 \beta n^{-\log_3 5}  \text{\em or }  \rcal(A, [0,  \tau_{\lceil n \gc^{-1} \rceil }] ) \not \subseteq A^{\eps n^{- (1/6)\log_3 5}}  \right\} \\
& \quad \le C_1 \exp ( - C_2 n^{ 1/3} ). \\
\end{split}
\end{equation}
(b) The same inequality holds for the  set-valued coalescing coalescing Brownian motion process in the finite gasket.
\end{lemma}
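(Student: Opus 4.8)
The plan is to control the two failure modes separately by working with the underlying free motions $\bxi=(\xi_1,\dots,\xi_n)$ and the explicit collision rule $\Lambda$. First I would pass from the random time to a deterministic one: set $T:=25\gb n^{-\log_3 5}$ and $r:=\eps n^{-(1/6)\log_3 5}$. On $\{\tau^A_{\lceil n\gc^{-1}\rceil}\le T\}$ we have $\rcal(A,[0,\tau^A_{\lceil n\gc^{-1}\rceil}])\subseteq\rcal(A,[0,T])$, so the bad event is contained in $\{\tau^A_{\lceil n\gc^{-1}\rceil}>T\}\cup\{\rcal(A,[0,T])\not\subseteq A^{r}\}$, and it suffices to bound these two probabilities. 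Since each coalescing coordinate satisfies $\zeta_i(s)=\xi_j(s)$ for some $j\le i$, we have $\rcal(A,[0,T])\subseteq\bigcup_i\{\xi_i(s):s\le T\}$, so the second event forces $\sup_{0\le s\le T}|\xi_i(s)-\xi_i(0)|> r$ for some $i$; Lemma~\ref{l:supbound}(a) bounds its probability by $2nc_1\exp(-c_2(r^{d_w}/T)^{1/(d_w-1)})$. A direct computation gives $r^{d_w}/T\asymp n^{(\log_3 5)(1-d_w/6)}$ with positive exponent (as $d_w<6$), making this term stretched-exponentially small.

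For the first event I would set up the pigeonhole at the right scale. Choose the integer $m$ with $\gb 5^{-m}\le T$ and $5^{-m}$ of the largest admissible order; since $25=5^{2}$ this forces $m\approx\log_3 n-2$, so the number of $m$-triangles of $\G$ meeting $\gD^e$ is of order $3^{m}\asymp n$ and in fact a fixed fraction of $n$ --- the constants $25$ and $\gc=1/(1-\underline p/5)$ being chosen precisely so that this fraction is small enough. Using these $m$-triangles as boxes and the $n$ points of $A$ as balls, Lemma~\ref{l:pigeon_hole} yields disjoint pairs $(i_1,j_1),\dots,(i_k,j_k)$ with $k\ge\lambda n$ for a fixed $\lambda>1/5$, each pair lying in a common $m$-triangle. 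By Theorem~\ref{l:lower_bdd_hitting_prob} the two independent free motions of such a pair meet somewhere in $(0,\gb 5^{-m})\subseteq(0,T)$ with probability at least $\underline p$; call pair $\ell$ \emph{good} when this occurs. As the pairs are disjoint, the indicators $Y_\ell$ are independent, each dominating a Bernoulli$(\underline p)$.

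The crux is the \emph{pathwise} inequality $n-\#\Xi_T\ge\sum_\ell Y_\ell$: each good pair forces a coalescence of one of its own two labels by time $T$. Here I would first record that $\#\Xi_T=\#\{i:\tau_i>T\}$, whence $n-\#\Xi_T=\#\{i:\tau_i\le T\}$. Fix a good pair with $i_\ell<j_\ell$ and let $s$ be the first time $\xi_{i_\ell}(s)=\xi_{j_\ell}(s)$. If $\tau_{i_\ell}\le s$ then $i_\ell$ has already coalesced; otherwise $\zeta_{i_\ell}(s)=\xi_{i_\ell}(s)=\xi_{j_\ell}(s)$ exhibits $\xi_{j_\ell}(s)$ as one of $\zeta_1(s),\dots,\zeta_{j_\ell-1}(s)$, forcing $\tau_{j_\ell}\le s\le T$. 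Either way one member of the pair lies in $\{i:\tau_i\le T\}$, and disjointness of the pairs makes these members distinct, giving the inequality. I expect this comparison to be the main obstacle: a free meeting of a pair need not coalesce \emph{that} pair in the full system (a member may first be absorbed by an outside particle), so one cannot simply identify good pairs with coalescences; the point is that a good pair must nonetheless spend one coalescence among its two labels, which is all the counting needs.

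Combining the pieces, $\{\tau^A_{\lceil n\gc^{-1}\rceil}>T\}=\{n-\#\Xi_T<n-\lceil n\gc^{-1}\rceil\}=\{n-\#\Xi_T<\lfloor n\underline p/5\rfloor\}\subseteq\{\sum_\ell Y_\ell<\lfloor n\underline p/5\rfloor\}$, and since $\E[\sum_\ell Y_\ell]\ge\lambda\underline p\,n>\underline p\,n/5$, a Chernoff bound for sums of independent Bernoulli variables bounds this by $\exp(-cn)$. Both contributions are thus at most stretched-exponentially small in $n$ with exponent at least $1/3$ (indeed larger, but $1/3$ is all that is needed for the later summations), so after absorbing the polynomial prefactor they combine to the asserted $C_1\exp(-C_2 n^{1/3})$. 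Part~(b) is identical, replacing Theorem~\ref{l:lower_bdd_hitting_prob} and Lemma~\ref{l:supbound}(a) by Corollary~\ref{cor:lower_bdd_hitting_prob} and Lemma~\ref{l:supbound}(b), equivalently transporting the whole argument through the folding map of Lemma~\ref{infinite_to_finite_gasket}.
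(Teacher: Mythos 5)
Your proposal follows the paper's strategy quite closely: a pigeonhole pairing of the particles at a suitable triangle scale, independent pair-collision events each of probability at least $\underline{p}$ via Theorem~\ref{l:lower_bdd_hitting_prob}, a Hoeffding/Chernoff bound on the number of successful pairs, and Lemma~\ref{l:supbound}(a) for the range. One genuine improvement: your pathwise counting step (that $n-\#\Xi_T=\#\{i:\tau_i\le T\}$ and that each ``good'' pair must contribute at least one index to $\{i:\tau_i\le T\}$, distinct across disjoint pairs) is a cleaner and fully rigorous justification of what the paper handles by introducing a ``partial coalescing system'' and merely asserting that its number of survivors dominates that of the full system.

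There is, however, a genuine quantitative gap at the pigeonhole step. Having fixed $T=25\beta n^{-\log_3 5}=\beta 5^{\,2-\log_3 n}$, your constraint $\beta 5^{-m}\le T$ forces $m\ge \log_3 n-2$, hence $m=\lceil \log_3 n-2\rceil$. Covering the extended triangle $\gD^e$ (one $0$-triangle plus three $1$-triangles) requires exactly $2\cdot 3^m$ $m$-triangles --- this cannot be improved, since $\mu(\gD^e)$ equals $2\cdot 3^m$ times the mass of an $m$-triangle --- and in the worst case ($\log_3 n-2$ just above an integer) $2\cdot 3^m$ is close to $2n/3$. Lemma~\ref{l:pigeon_hole} then guarantees only $(n-2n/3)/2=n/6$ disjoint pairs, not $\lambda n$ with $\lambda>1/5$ as you claim. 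This breaks the concentration step: the required number of coalescences is $\lfloor n\underline{p}/5\rfloor$, which for large $n$ exceeds the guaranteed mean $\underline{p}\,n/6$ of $\sum_\ell Y_\ell$, so no Chernoff bound can show $\sum_\ell Y_\ell\ge\lfloor n\underline{p}/5\rfloor$ with high probability --- the inequality runs the wrong way. The paper avoids this trap by ordering the choices oppositely: it first fixes the scale $b_n=\lfloor\log_3(n/4)\rfloor$ so that the number of boxes is at most $n/2$, guaranteeing $n/4>n/5$ pairs, and only then reads off the coalescence time $\beta 5^{-b_n}$. (As a matter of bookkeeping, that time is really bounded by $5^{1+\log_3 4}\beta n^{-\log_3 5}\approx 38\,\beta n^{-\log_3 5}$ rather than $25\,\beta n^{-\log_3 5}$, so even the paper's proof yields the lemma only with $25$ replaced by a somewhat larger universal constant; this is harmless, since everywhere the lemma is used, e.g.\ in Lemma~\ref{l:gasket_intermediate}, only the order $n^{-\log_3 5}$ matters.) Your argument can be repaired the same way: either prove the statement with $25$ replaced by $125$, i.e.\ take $m=\lceil\log_3 n-3\rceil$, which yields at least $(n-2n/9)/2=7n/18>n/5$ pairs, or keep your $m$ and weaken $\gc$ to $1/(1-\underline{p}/7)$; either variant serves the downstream arguments equally well.
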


\begin{proof}
(a) For any integer $b \ge 1$, the set $A$ can be covered by at most $2 \times 3^b$  $b$-triangles. Put
\[ b_n := \max \{b :  2 \times 3^b \le n/2 \},\]
or, equivalently,
\[
b_n =\lfloor \log_3 (n/4) \rfloor.
\]
By Lemma~\ref{l:pigeon_hole}, at time $t=0$ it is possible to form at least $n/2 - n/4 = n/4$  disjoint pairs of particles, where two particles are deemed eligible to form a pair if they belong to the same $b_n$-triangle.  Fix such an (incomplete) pairing of particles. Define a new ``partial'' coalescing system involving $n$ particles, where a particle is only allowed to coalesce with the one it has been paired up with and after such a coalescence occurs the two partners in the pair both follow the path of the particle having the lower rank among the two.  Evidently this new system  is same as the coalescing system in the marked space where two particles have the same mark if and only if they have been paired up. From the discussion in Subsection \ref{subsec:coupling} the number of surviving distinct particles in this partial coalescing system stochastically dominates the number of surviving particles in the original coalescing system.

By Theorem~\ref{l:lower_bdd_hitting_prob}, the probability 
that a pair in the partial coalescing system coalesces before time $t_n := \beta 5^{-b_n} $  is at least $\underline{p}$, independently of the other pairs. Thus, the number of coalescence by time $t_n$ in the partial coalescing system stochastically dominates a random variable that is distributed as
the number of successes in $n/4$ independent Bernoulli trials
with common success probability  $\underline{p}$.
By Hoeffding's inequality, the probability that a random variable with
the latter distribution takes a value $n \underline{p}/5$ or greater is
at least $1 - e^{ - C'_1 n}$ for some constant $C'_1>0$. Thus,
the probability that the number of surviving particles in the 
original coalescing system drops below $\lceil (1 - \underline{ p}/5) n \rceil =  \lceil n\gc^{-1} \rceil $ by time $t_n \le 25 \gb n^{-\log_3 5}$
is at least  $1 - e^{-C'_1 n}$.

 From Corollary \ref{l:supbound}(a)  and the fact that  during a fixed time interval the maximum displacement of particles in the coalescing system is always bounded by the maximum displacement of independent particles
starting from the same initial configuration, the probability that
over a time interval of length $25 \gb n^{-\log_3 5}$ one of the coalescing particles has moved more than a distance $\eps n^{- (1/6)\log_3 5}$ from
its original position is bounded by
\[
\begin{split} 
& 2 n c_1 \exp\Big ( - c_2 (  (\eps n^{- (1/6)\log_3 5})^{d_w}(25 \gb n^{-\log_3 5})^{-1})^{ 1/ (d_w - 1)}\Big )\\
& \quad \le 2 \exp\Big (  \log n - C'_2 (n^{ (1/2)\log_3 5})^{ 1/ (d_w - 1)}\Big )\\
& \quad \le C_1 \exp ( - C_2 n^{ (1/4)\log_3 5} ) \\
& \quad  \le  C_1 \exp ( - C_2 n^{1/3} ). \\
\end{split}
\]
(b) The proof is identical to part (a). It uses Corollary \ref{cor:lower_bdd_hitting_prob} in place of Theorem \ref{l:lower_bdd_hitting_prob} and Lemma \ref{l:supbound}(b) in place of Lemma \ref{l:supbound}(a).
\end{proof}

\begin{lemma}\label{l:gasket_intermediate}
(a) Let $\Xi$ be the set-valued coalescing Brownian motion process in the infinite gasket with $\Xi_0 = A$.  Fix $\eps>0$. Set  $\nu_i:=\eps \gamma^{-(1/6)\log_3 5 \times  i}$ and $\eta_i = 25 \beta \gc^{-i \log_3 5}$
for $i \ge 1$.
There are positive constants $C_1=C_1(\eps)$ and $C_2 = C_2(\eps)$ such that
 \begin{align*}
\prob &\left \{ \tau^A_{\lceil \gc^k \rceil } > \sum_{i=k+1}^m \eta_i \ \text{\em or }  \rcal(A; [0, \tau^A_{\lceil \gc^k \rceil} ]) \not \subseteq  (A)^{ \sum_{i=k+1}^m \nu_i}\right \}
\le \sum_{i=k+1}^{m}C_1 \exp ( - C_2 \gc^{  i/3} ),
 \end{align*}
uniformly for all sets $A$ of cardinality $\lceil \gc^m \rceil$ such that
the fattening $A^{\sum_{i=k+1}^m \nu_i}$ is contained in some extended triangle $\gD^e$ of $\G$.
 
\noindent
(b)  The analogous inequality holds for the  set-valued coalescing Brownian motion process in the finite gasket.
\end{lemma}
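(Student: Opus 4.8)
The plan is to prove part (a) by iterating Lemma~\ref{l:gasket_main} over a sequence of stages, using the strong Markov property of the set-valued process $\Xi$ to restart the single-stage estimate after each reduction. Set $n_i := \lceil \gc^i \rceil$ for $k \le i \le m$, and introduce the stopping times $\sigma_i := \tau^A_{n_i}$, so that $\sigma_m = 0$ (we begin with $n_m = \lceil\gc^m\rceil$ particles) while $\sigma_k = \tau^A_{\lceil\gc^k\rceil}$ is exactly the quantity to be bounded. Stage $i$, for $i = m, m-1, \ldots, k+1$, consists of reducing the number of surviving particles from $n_i$ down to $n_{i-1}$, starting from the configuration $\Xi_{\sigma_i}$, which has at most $n_i$ points.

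For each stage I would introduce the good event $G_i$ that, started from $\Xi_{\sigma_i}$, the system reaches $n_{i-1}$ surviving particles within time $\eta_i$ and that its range over that time lies in the $\nu_i$-fattening of $\Xi_{\sigma_i}$. Since a configuration of cardinality $n_i \ge \gc^i$ satisfies $n_i^{-\log_3 5}\le\gc^{-i\log_3 5}$ and $n_i^{-(1/6)\log_3 5}\le\gc^{-(1/6)\log_3 5\cdot i}$, applying Lemma~\ref{l:gasket_main} to $\Xi_{\sigma_i}$ with fattening parameter $\eps$ gives a time bound $\eta_i$, a displacement bound $\nu_i$, and $\prob(G_i^c \mid \fcal_{\sigma_i}) \le C_1 \exp(-C_2 n_i^{1/3}) \le C_1\exp(-C_2\gc^{i/3})$; here the conditioning is legitimate because $\Xi$ is strong Markov (the Corollary following Proposition~\ref{p:inv_dense_set}) and, by Lemma~\ref{l:inv_dist} and Proposition~\ref{p:inv_dense_set}, the law of the future depends only on the set $\Xi_{\sigma_i}$. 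On $\bigcap_{i=k+1}^m G_i$ the stage durations and displacements telescope: the total elapsed time is $\sum_{i=k+1}^m\eta_i$, and since $\Xi_{\sigma_i}\subseteq A^{\sum_{j=i+1}^m\nu_j}$ the range over stage $i$ lies in $A^{\sum_{j\ge i}\nu_j}\subseteq A^{\sum_{j=k+1}^m\nu_j}$, so the full range is contained in $A^{\sum_{i=k+1}^m\nu_i}$. A union bound over the $m-k$ stages then yields the asserted estimate $\sum_{i=k+1}^m C_1\exp(-C_2\gc^{i/3})$.

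The step I expect to be the real obstacle is verifying that the hypothesis of Lemma~\ref{l:gasket_main} is valid at every stage \emph{with constants $C_1,C_2$ that are uniform across the iteration}. On $\bigcap_{j>i} G_j$ the configuration $\Xi_{\sigma_i}$ has drifted from $A$ by at most $\sum_{j=i+1}^m\nu_j$, and because the $\nu_j$ form a geometric sequence their total is at most a fixed multiple of $\eps$; the standing assumption $A^{\sum_{i=k+1}^m\nu_i}\subseteq\gD^e$ therefore keeps $\Xi_{\sigma_i}$ well inside $\gD^e$ at every stage, so Lemma~\ref{l:gasket_main} applies throughout with a fattening parameter bounded below independently of $i$, which is what makes $C_1$ and $C_2$ uniform. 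A secondary piece of bookkeeping is that $\#\Xi_{\sigma_i}$ may undershoot $n_i$ (a stage already complete simply contributes no time and no displacement) and that the reduction $n\mapsto\lceil n\gc^{-1}\rceil$ must chain from $\lceil\gc^m\rceil$ down to $\lceil\gc^k\rceil$; these amount only to routine rounding and monotonicity arguments, so I would not dwell on them.

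Finally, part (b) follows by the identical argument, invoking the finite-gasket statements—Corollary~\ref{cor:lower_bdd_hitting_prob}, Lemma~\ref{l:supbound}(b), and Lemma~\ref{l:gasket_main}(b)—in place of their infinite-gasket counterparts.
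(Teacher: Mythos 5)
Your proposal is correct and follows essentially the same route as the paper: the paper's proof is likewise an iteration of Lemma~\ref{l:gasket_main} via the strong Markov property at the successive stopping times $\tau^A_{\lceil\gc^i\rceil}$, with the telescoping containment $(A^{\nu_m})^{\nu_{m-1}} \subseteq A^{\nu_m+\nu_{m-1}} \subseteq \gD^e$ guaranteeing the single-stage hypothesis (and hence uniform constants) at every step, exactly the point you flag as the crux. Your phrasing as an intersection of per-stage good events with a union bound is just a repackaging of the paper's backward induction with a supremum over starting configurations, and both treatments gloss over the same rounding/undershoot bookkeeping at the same level of detail.
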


\begin{proof}
Fix an extended triangle $\gD^e$ of the infinite gasket and a  set $A$ such that $\# A = \lceil \gc^m \rceil$
and $A^{\sum_{i=k+1}^m \nu_i} \subseteq \gD^e$. We will prove the bound by induction on $m$. By the 
strong Markov property and Lemma \ref{l:gasket_main}, we have,
using the notation 
$A_{\tau,m-1} := \Xi_{\tau^A_{\lceil \gc^{m-1} \rceil}}$,
\[
\begin{split}
&\prob \Big \{ \tau^A_{\lceil \gc^k \rceil } > \sum_{i=k+1}^m \eta_i  \text{ or }  \rcal(A; [0, \tau^A_{\lceil \gc^k \rceil} ])  \not \subseteq  (A)^{ \sum_{i=k+1}^m \nu_i}\Big \}\\
&\quad \le  
\prob \Big \{ \tau^A_{\lceil \gc^{m-1} \rceil } > \eta_m \ \text{ or }   
 \rcal(A; [0, \tau^A_{\lceil \gc^{m-1} \rceil} ])  \not \subseteq  A^{ \nu_m} \Big \} \\
& \qquad + \E \Big [ 1\Big \{ A_{\tau,m-1}  \subseteq  A^{ \nu_m}  \Big \} \\
& \quad \qquad \times \prob \Big \{ \tau^{A_{\tau,m-1}}_{\lceil \gc^{k} \rceil} > \sum_{i=k+1}^{m-1} \eta_i   \text{ or }  \rcal (A_{\tau,m-1}; [0, \tau^{A_{\tau,m-1}}_{\lceil \gc^k \rceil }] ) \not \subseteq  A^{ \sum_{i=k+1}^{m-1} \nu_i} \Big \} \Big ]\\
& \quad \le C_1 \exp ( - C_2 \gc^{ m/3 } ) \\ 
& \qquad  + \sup_{ \begin{smallmatrix} A_1:  |A_1|  =  \lfloor \gc^{m-1} \rfloor, \\
A_1 \subseteq  A ^{ \nu_m} \end{smallmatrix}} \prob \left \{ \tau^{A_1}_{\lceil \gc^k \rceil } > \sum_{i=k+1}^{m-1} \eta_i  \text{ or } \rcal(A_1; [0, \tau^{A_1}_{\lceil \gc^k \rceil} ]) \not  \subseteq A_1^{ \sum_{i=k+1}^{m-1} \nu_i}\right \}.
 \end{split}
\]
Since $(A^{\nu_m})^{\nu_{m-1}} \subseteq A^{\nu_m + \nu_{m-1} } \subseteq \gD^e$,  the second term on the last expression can  be bounded similarly as
\[
\begin{split}
&\sup_{ \begin{smallmatrix} A_1:  |A_1|  =  \lfloor \gc^{m-1} \rfloor, \\ 
A_1 \subseteq  A ^{ \nu_m} \end{smallmatrix}} \prob \left\{ \tau^{A_1}_{\lceil \gc^k \rceil } > \sum_{i=k+1}^{m-1} \eta_i  \text{ or } \rcal(A_1; [0, \tau^{A_1}_{\lceil \gc^k \rceil} ])  \not  \subseteq A_1^{ \sum_{i=k+1}^{m-1} \nu_i}\right \} \\
&\le C_1 \exp ( - C_2 \gc^{ (m-1)/3} ) \\
&  + \sup_{ \begin{smallmatrix} A_2:  |A_2|  =  \lfloor \gc^{m-2} \rfloor, \\
A_2 \subseteq  A^{ \nu_m +\nu_{m-1}} \end{smallmatrix}} \prob \left \{ \tau^{A_2}_{\lceil \gc^k \rceil } > \sum_{i=k+1}^{m-2} \eta_i  \text{ or } \rcal(A_2; [0, \tau^{A_2}_{\lceil \gc^k \rceil} ])  \not  \subseteq A_2^{ \sum_{i=k+1}^{m-2} \nu_i}\right \}. \\
 \end{split}
\]
 Iterating the above argument, the assertion follows.
 
\noindent
(b) Same as part (a).
\end{proof}

\begin{proof}[Proof of Theorem \ref{thm_main1_gasket}]

 (a) We may assume that $Q := \Xi_0$ is infinite, since otherwise there is nothing to prove.
By scaling, it is enough to prove the theorem when $Q$ is contained in $G$.  Let $Q_1 \subseteq Q_2 \subseteq \ldots \subseteq Q$ be a sequence of finite sets such that $\# Q_m = \lceil \gc^m \rceil$ and  $Q$ is the closure
 $\bigcup_{m=1}^\infty Q_m $. By assigning suitable rankings to a system of independent particles starting from each point in $\bigcup_{m=1}^\infty Q_m$, we can obtain coupled set-valued coalescing processes $\Xi^1, \Xi^2, \dots$ and $\Xi$    
with the property that $\Xi_0^m = Q_m$, $\Xi_0 = Q$, and for each $t>0$,
\[ 
\Xi_t^1 \subseteq \Xi_t^2 \subseteq \ldots \subseteq \Xi_t
\]
and 
$\Xi_t$ is the closure of $\bigcup_{m=1}^\infty \Xi_t^m$.
 
Fix $\eps>0$ so that $Q^{\eps \sum_{i=0}^\infty \gamma^{-(1/6)\log_3 5 \times i} } $ is contained in the extended triangle corresponding to $G$. 
Set  $\nu_i:=\eps \gamma^{-(1/6)\log_3 5 \times  i}$ and $\eta_i := 25 \beta \gc^{-i \log_3 5}$.
Fix $t>0$. Choose $k_0$ so that $\sum_{i=k_0+1}^\infty \eta_i \le t$. 
By Lemma \ref{l:gasket_intermediate} and the fact that $s \mapsto \# \Xi_s^m$ is non-increasing, we have, for each $k \ge k_0$, 
  \[ 
  \prob \left \{  \# \Xi_t^m \le  \lceil \gc^{k} \rceil  \right \} 
  \ge 1- \sum_{i=k+1}^{m}C_1 \exp ( - C_2 \gc^{  i/3} ). 
  \]
 
 By the coupling, the sequence of events 
 $\{  \# \Xi_t^m \le  \lceil \gc^{k} \rceil  \} $ decreases to the event 
 $\{  \# \Xi_t \le  \lceil \gc^{k} \rceil  \}$. 
 Consequently,  letting $m \to \infty$, we have, for each $k \ge k_0$,
 \[ \prob \left \{  \# \Xi_t  \le \lceil \gc^{k} \rceil  \right \} 
 \ge 1 - \sum_{i=k+1}^{\infty}C_1 \exp ( - C_2 \gc^{ i/3} ). 
 \]
Finally letting $k \to \infty$, we conclude that
\[ 
\prob \left \{  \# \Xi_t < \infty  \right \} = 1.
\]
 
\noindent
(b)  Same as part (a).
 \end{proof}
 
\begin{proof}[Proof of Theorem \ref{thm_main2_gasket}]
 
 (a) Assume without loss of generality that $Q := \Xi_0$ is infinite  and contained in the $1$-triangle that contains the origin. By Theorem \ref{thm_main1_gasket}, $\Xi_t$ is almost surely  finite and hence it can be considered as a random element  in $(\kcal, d_H)$.   It is enough to prove that 
$\lim_{t \downarrow 0} d_H(\Xi_t, \Xi_0) = 0$ almost surely.

 Let $Q_1 \subseteq Q_2 \subseteq \cdots$ be a nested sequence of finite approximating sets of $Q$ chosen as 
 in the proof of Theorem \ref{thm_main1_gasket}, and let
$\Xi^m$ be the corresponding coupled sequence of set-valued processes. 
 
Fix $\gd>0$.
Choose $m$ sufficiently large that $Q \subseteq Q_m^{\delta/2}$. By the right-continuity of the finite coalescing process, we have 
\[ \lim_{t \downarrow 0} d_H(\Xi_t^m, Q_m ) \to 0 \quad a.s.\]
 Thus, with probability one, $(\Xi_t^m)^{\delta/2} \supseteq Q_m$  when $t$ is sufficiently close to  $0$.
 But, by the  choice of $Q_m$, with probability one,
 \begin{equation}\label{eq:one_side_ineq}
  (\Xi_t^m)^{\delta} \supseteq (Q_m)^{\delta/2} \supseteq Q
  \end{equation}
  for $t$ sufficiently close to  $0$.

 Conversely, choose $\eps >0$ sufficiently small so that 
 $ \sum_{i=}^\infty \nu_i < \delta/2$ where $\nu_i$ is defined as in Lemma \ref{l:gasket_intermediate}. Set 
 $s_k := \sum_{i=k+1}^\infty \eta_i  \sim C\gc^{ - k \log_3 5}$. From Lemma \ref{l:gasket_intermediate}, we have
 \begin{align} 
 & \prob \Big \{  \rcal(Q_m; [0, s_k])  \not \subseteq  (Q)^{\gd}\Big \} \notag \\ 
&\le \prob \Big \{ \tau^{Q_m}_{\lceil \gc^k \rceil } > \sum_{i=k+1}^m \eta_i \ \text{ or }  \rcal(Q_m; [0, \tau^{Q_m}_{\lceil \gc^k \rceil }])  \not \subseteq  (Q)^{\gd/2}\Big \} \notag \\ 
&+ \prob \Big \{\text{max displacement of $\lceil \gc^k \rceil$ independent particles in } [0, s_{k-1}]   > \gd/2 \Big \} \notag \\
&\le \sum_{i=k+1}^{m}C_1 \exp ( - C_2 \gc^{  i/3} ) + C_1'\lceil \gc^k \rceil \exp ( - C'_2 \gc^{k} )  \notag \\ 
\label{eq:thm_2_ineq}
&\le C_3 \exp ( - C_2 \gc^{  k/3} ).
\end{align}      
By Theorem \ref{thm_main1_gasket}  $\# \Xi_{s} < \infty $ almost surely, 
and hence $\Xi_{s}^m = \Xi_{s}$ for all $m$ sufficiently large almost surely.  
Therefore, by letting $m \to \infty$ in \eqref{eq:thm_2_ineq}, we obtain
\[  \prob \Big \{   \rcal(Q; [0, s_k])  \not \subseteq  (Q)^{\gd}\Big \} \le C_3 \exp ( - C_2 \gc^{  k/3} ).\]
Letting $k \to \infty$, we deduce that,  with probability one, 
\[ \Xi_t \subseteq Q^\delta \] 
 for $t$ sufficiently close to  $0$. Combined with \eqref{eq:one_side_ineq}, 
 this gives the desired claim.
 \end{proof}
 
 \begin{proof}[Proof of Theorem \ref{thm:discrete_gasket}]
 By scaling, it suffices to show that  almost surely, the set $\Xi_t \cap G$ is finite for all $t>0$.   Fix any $0< t_1 < t_2$. We will show that almost surely, the set $\Xi_t \cap G$ is finite for all $t \in [t_1, t_2]$. 
 
Set $J_{0,1} := G$. Now for $r \ge 1$, the set $ 2^{r} G \setminus 2^{r-1}G$ can be covered by 
exactly $2 \times 3^{r-1}$ many $0$-triangles that we will denote by  $J_{r,\ell}$ for $1 \le \ell \le 2 \times 3^{r-1}$. The collection $\{J_{r, \ell}\}$ forms a covering of  the infinite gasket.   

Put $Q := \Xi_0$ and let $D$ be a countable dense subset  of $Q$.  Associate each point of $D$ with one of the
(at most two) $0$-triangles to which it belongs. Denote by $D_{r,\ell}$ the subset of $D$ consisting of
particles associated with $J_{r,\ell}$. 
Construct a partial coalescing system starting from $D$ such that two particles coalesce if and only if they collide and both of their initial positions belonged
to the same set $D_{r,\ell}$. Let $(\Xi_t^{r, \ell})_{t \ge 0}$ denote the set-valued coalescing process  consisting of the (possibly empty) subset of the particles associated with $J_{r, \ell}$. 

Note that $(\bigcup_{(r, \ell)}  \Xi_t^{r, \ell})_{ t\ge 0}$ is the set-valued coalescing process in the marked space where two  particles have same mark if and only if both of them  originate from the same  $D_{r, \ell}$. Approximate the set $D$ by a sequence of increasing finite sets. By appealing to the same kind of reasoning as in  Theorem \ref{thm_main1_gasket}, we can find an increasing sequence of set-valued coalescing processes in the original (resp. marked) space starting from this sequence of increasing finite sets which `approximates' the process $(\Xi_t)_{ t\ge 0}$ ( resp. $(\bigcup_{(r, \ell)}  \Xi_t^{r, \ell})_{t \ge 0}$)
 in the limit. Now using the coupling involving finitely many particles given in Subsection \ref{subsec:coupling}  and then passing to the limit, it follows that
\[ \prob \{ \# \Xi_t \cap G  <\infty  \ \forall t \in [t_1, t_2] \} \ge \prob \{ \# \bigcup_{(r, \ell)}  \Xi_t^{r, \ell} \cap G  < \infty  \ \forall  t \in [t_1, t_2] \}.\]
It thus suffices to prove that almost surely,  the set  $G \cap \bigcup_{(r, \ell)} \Xi_t^{r, \ell}$ is finite for all $t \in [t_1, t_2]$. 

Fix $\gD = J_{r, \ell} \in \wt \tcal_0$.  Recall the notation of 
Lemma~\ref{l:gasket_intermediate}. Find  $\eps >0$ such that $ \gD^{ \sum_{i=0}^\infty \nu_i } \subset \gD^e$.   
Let $A_1 \subseteq A_2 \subseteq \ldots$ be  an increasing sequence of sets such that 
$\bigcup_m A_m = D_{r,\ell}$. 
Construct coupled set-valued
coalescing processes $\wt \Xi^1 \subseteq \wt \Xi^2 \subseteq \ldots \subseteq \Xi^{r,\ell}$
such that $\wt \Xi_0^m = A_m$.
Note that by Lemma \ref{l:gasket_intermediate}
\[
\begin{split}
& \prob \Big\{ \Xi_t^{r,\ell} \cap G  \ne \emptyset  \text{ for some } t \in [t_1, t_2]  \Big\} \\
& \quad = \lim_{m\to \infty}\prob \Big\{ \wt \Xi_t^m \cap G  \ne \emptyset  \text{ for some } t \in [t_1, t_2] \Big\} \\
& \quad \le \limsup_{m\to \infty} \prob \Big\{ \tau^{A_m}_{\lceil \gc^r \rceil } > \sum_{i=r+1}^\infty \eta_i \text{ or }  \Xi_{ \tau^{A_m}_{\lceil \gc^r \rceil } }^m \not \subseteq \gD^e \text{ or max displacement }\\
& \qquad \text{of the remaining $\lceil \gc^r \rceil$  coalescing particles in $[ \tau^{A_m}_{\lceil \gc^r \rceil }, t_2] > (r - 3/2)$}\Big\} \\
& \quad \le \limsup_{m\to \infty} \prob \Big\{ \tau^{A_m}_{\lceil \gc^r \rceil } > \sum_{i=r+1}^\infty \eta_i \text{ or }  \Xi_{ \tau^{A_m}_{\lceil \gc^r \rceil } }^m \not \subseteq \gD^e \Big\} \\
& \qquad + \prob\Big\{ \text{max displacement of  $\lceil \gc^r \rceil$  independent  particles in $[0, t_2] > (r - 3/2)$}\Big\}  \\
& \quad \le C_1' \exp(-C_2\gc^{r/3} )  + 2c_1 \lceil \gc^r \rceil \exp\Big ( - c_2 (  (r-3/2)^{d_w}/t_2 )^{ 1/ (d_w - 1)}\Big )\\
& \quad \le  C_3\exp(-C_4\gc^{r/3} ) \\
\end{split}
\]
for some constants $C_3, C_4>0$ that may depend on $t_2$ but are independent of $r$ and $\ell$. The first of the above inequalities follows from the fact that
\[\inf_{x \in J_{r, \ell}, \ y \in G}  |x - y|  \ge 2^{r-1} -1 \ge r-1,\]
which implies that $\gD^e$ is at least at a distance $(r- 3/2)$ away from $G$.

Now  by a union bound, 
\[ \prob \Big\{ \Xi_t^{r, \ell} \cap G  \ne \emptyset  \text{ for some } t \in [t_1, t_2] \text{ and}  \text{ for some } \ell  \Big\} \le 2 \times 3^{r-1} C_3\exp(-C_4\gc^{r/3} ).\]
By the Borel-Cantelli lemma, the events $\Xi_t^{r, \ell} \cap G \ne \emptyset$ 
for some $ t \in [t_1, t_2]$ happen for only finitely  many $(r, \ell)$
almost surely. This combined with the fact that $\# \Xi_t^{r, \ell} < \infty $ for all $t>0$ almost surely gives that
\[ 
\# \bigcup_{(r,\ell)} ( G \cap \Xi_t^{r, \ell} ) < \infty   \text{ for all } t \in [t_1, t_2]
\]
almost surely.
\end{proof}

\section{Instantaneous coalescence of stable particles}

\subsection{Stable processes on the real line and unit circle}
 Let  $X = (X_t)_{t \ge 0}$  be a (strictly) stable process with index $\alpha>1$ on $\mathbb R$.
 The characteristic function of $X_t$ can be expressed as  $\exp(-\Psi(\gl) t)$ where $\Psi(\cdot)$ is called the characteristic exponent and has the form 
 \[ \Psi(\gl) = c |\gl|^\ga \big ( 1 - i \upsilon \mathrm{sgn}(\gl) \tan(\pi \ga/2) \big), \quad \gl \in (-\infty, \infty), i = \sqrt{-1}. \]
 where $c>0$ and $\upsilon \in [-1, 1]$. The L\'{e}vy measure of $\Pi$ is absolutely continuous with respect to Lebesgue measure, with density
 \[ \Pi(dx) = \left\{\begin{array}{lc}c^+ x^{-\ga-1} dx & \text{ if }  x> 0,\\
 c^- |x|^{-\ga-1} dx & \text{ if }  x< 0,
 \end{array} \right.\]
 where $c^+, c^-$ are two nonnegative real numbers such that $\upsilon  = (c^+ - c^-)/(c^++c^-)$. The process is symmetric if $c^+ = c^-$ or equivalently $\upsilon = 0$.
The stable process has the  scaling property
\[ 
X \stackrel{d}{=} (c^{-1/\ga} X_{ct})_{t \ge 0} 
\]
for any $c>0$.
If we put  $Y_{t} := e^{2 \pi i X_{t}}$, then the process $(Y_t)_{t \ge 0}$ is the
 stable process with index $\alpha >1$ on the unit circle $\cir$.

We define the distance between two points on $\cir$ as the length of the shortest path between them and continue to use the same notation $|\cdot|$ as for the Euclidean metric on the real line.

\begin{theorem}[Instantaneous Coalescence] 
\label{thm_main1_stable} 
(a) Let $\Xi$ be the set-valued coalescing stable process on $\mathbb{R}$ with
$\Xi_0$ compact.
Almost surely, $\Xi_t$ is a finite set for all $t>0$.

\noindent
(b) The conclusion of part (a) holds for the set-valued coalescing stable process on $\cir$.
\end{theorem}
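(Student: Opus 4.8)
The plan is to reproduce the proof of Theorem~\ref{thm_main1_gasket} essentially line by line, substituting stable-process analogues for the gasket-specific ingredients, and then to deduce the circle statement from the line statement. By the translation and scaling invariance of the stable process we may assume in part~(a) that $\Xi_0 \subseteq [0,1]$. Two preliminary inputs are required. First, a \emph{uniform collision lower bound}: the difference $D = X' - X''$ of two independent copies has, by the form of the characteristic exponent, $\E e^{i\lambda D_t} = e^{-2c|\lambda|^\alpha t}$, so $D$ is symmetric stable of index $\alpha$, and since $\alpha > 1$ single points are non-polar for $D$ (provable exactly as in Proposition~\ref{prop:diag_nonpolar} via a finite-energy measure, or cited as classical). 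Combining non-polarity, continuity of the hitting probability in the starting point, and the scaling $D \stackrel{d}{=} (r^{-1} D_{r^\alpha t})_{t \ge 0}$, one gets constants $\beta, \underline{p} > 0$ such that any two particles started within distance $r$ collide during $(0,\beta r^\alpha)$ with probability at least $\underline{p}$, uniformly in $r$ and in location. Second, a \emph{maximal inequality} (the analogue of Lemma~\ref{l:supbound}): by L\'evy's maximal inequality and the tail bound $\prob\{|X_t| > \rho\} \le C t \rho^{-\alpha}$ one obtains $\prob\{\sup_{s\le t}|X^i_s - x^i| > r \text{ for some } i\} \le C\,n\,t\,r^{-\alpha}$. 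This is the crucial point of departure from the gasket: the displacement tail is only \emph{polynomial}, not sub-Gaussian.

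With these in place I would prove the stable analogue of Lemma~\ref{l:gasket_main}. Starting from $n$ particles in an interval of bounded length $\ell$, I partition it into $\lceil n/2\rceil$ subintervals of length $\le 2\ell/n$ and invoke the pigeonhole Lemma~\ref{l:pigeon_hole} to form at least $n/4$ disjoint pairs lying in a common subinterval. Passing to the partial coalescing system, each pair collides by time $t_n := \beta(2\ell/n)^\alpha$ with probability at least $\underline{p}$ independently, so by Hoeffding's inequality at least $n\underline{p}/5$ of them do, outside an event of probability $\le e^{-Cn}$; thus the count drops to $\lceil n\gamma^{-1}\rceil$ with $\gamma = 1/(1-\underline{p}/5)$ by a time of order $\ell^\alpha n^{-\alpha}$. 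For the range, the maximal inequality bounds the probability that some particle moves more than $\eps n^{-\theta}$ over $[0,t_n]$ by roughly $C n\, t_n (\eps n^{-\theta})^{-\alpha} \approx C' \eps^{-\alpha}\ell^\alpha\, n^{1-\alpha+\theta\alpha}$. This exponent is negative precisely when $\theta < (\alpha-1)/\alpha$, a nonempty window because $\alpha > 1$; fixing such a $\theta$ makes the displacement error decay like a negative power of $n$.

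The remaining steps are formally identical to the gasket development. Writing $\nu_i := \eps\gamma^{-\theta i}$ and $\eta_i := C\gamma^{-\alpha i}$ — both summable, with stagewise failure probability $\le C_1\gamma^{-ci}$ for some $c>0$ — an induction on the number of stages via the strong Markov property yields the analogue of Lemma~\ref{l:gasket_intermediate}: from $\lceil\gamma^m\rceil$ particles whose $\sum_{i>k}\nu_i$-fattening lies in a fixed bounded interval, $\Xi$ reaches $\lceil\gamma^k\rceil$ particles within time $\sum_{i>k}\eta_i$ and without leaving that interval, outside an event of probability $\le\sum_{i>k}C_1\gamma^{-ci}$. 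Approximating a compact infinite $\Xi_0 = Q$ by nested finite sets $Q_m$ with $\#Q_m = \lceil\gamma^m\rceil$ and coupling so that $\Xi^m_t\subseteq\Xi_t$, one fixes $t>0$, chooses $k_0$ with $\sum_{i>k_0}\eta_i\le t$, and for $k\ge k_0$ obtains $\prob\{\#\Xi^m_t\le\lceil\gamma^k\rceil\}\ge 1-\sum_{i>k}C_1\gamma^{-ci}$; letting $m\to\infty$ and then $k\to\infty$ gives $\prob\{\#\Xi_t<\infty\}=1$. Since $t\mapsto\#\Xi_t$ is non-increasing, intersecting over a sequence $t_j\downarrow 0$ upgrades this to ``for all $t>0$'' almost surely, proving~(a). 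For part~(b) the cleanest route is the compactness argument of the Introduction: on the compact circle the pigeonhole principle always produces a pair within distance $2\pi/n$, the scaling collision bound gives $\prob\{\tau^{n+1}_n\ge k\beta n^{-\alpha}\}\le(1-\underline{p})^k$, and summing $\E[\tau^{n+1}_n]\le Cn^{-\alpha}$ with Markov's inequality shows $\#\Xi_t<\infty$ for each fixed $t$, whence monotonicity again gives the conclusion for all $t$; alternatively one transfers the line result through the covering map $x\mapsto e^{2\pi i x}$, under which a line collision forces a circle collision, exactly as in the Folding Lemma~\ref{infinite_to_finite_gasket}.

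The main obstacle is the polynomial displacement tail. In the gasket case the sub-Gaussian maximal inequality made every stagewise error doubly exponentially small, and the argument had room to spare; here the displacement term decays only like $n^{1-\alpha+\theta\alpha}$, so one must verify that the fattening rate $\theta$ can be chosen in the window $(0,(\alpha-1)/\alpha)$ that simultaneously keeps $\sum_i\nu_i$ finite and the stagewise failure sum $\sum_i\gamma^{-ci}$ convergent. That this window is nonempty is exactly where the hypothesis $\alpha>1$ enters a second time, beyond guaranteeing that points are non-polar, and it is the point that must be checked with care.
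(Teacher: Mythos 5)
Your proposal is correct and is essentially the paper's own (largely omitted) proof: the paper develops precisely the ingredients you list --- the collision lower bound via non-polarity of points for the symmetric difference process plus scaling, the polynomial maximal inequality $Cntu^{-\alpha}$, the pigeonhole/Hoeffding reduction lemma with fattening exponent $h = 1-(1+\eta)/\alpha$, $\eta\in((\alpha-1)/2,\alpha-1)$ (your window $\theta<(\alpha-1)/\alpha$ is the same condition), and the strong-Markov iteration over stages --- and then states that Theorem~\ref{thm_main1_stable} follows along the same, but simpler, lines as Theorem~\ref{thm_main1_gasket}. The only divergence is cosmetic: for part (b) the paper runs the identical lemma machinery directly on $\cir$ (with circle collision bounds pulled back through $x\mapsto e^{2\pi i x}$), whereas you invoke the introduction's compactness argument or a covering-map transfer, both of which are sound and indeed sketched or used by the paper itself.
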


\begin{theorem}[Continuity at time zero]
\label{thm_main2_stable}
(a) Let $\Xi$ be the set-valued coalescing stable process on $\mathbb{R}$ with
$\Xi_0$ compact.  Almost surely, $\Xi_t$ converges to $\Xi_0$ as $t \downarrow 0$.

\noindent
(b) The conclusion of part (a) holds for the set-valued coalescing stable process on $\cir$.
\end{theorem}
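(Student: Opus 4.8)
The plan is to mirror the proof of the corresponding gasket statement, Theorem~\ref{thm_main2_gasket}, substituting the power-law tail estimates appropriate to stable processes for the exponential tail estimates used in the continuous-path setting. I will describe part (a); part (b) is handled the same way, and is if anything easier because $\cir$ is compact, so that no contribution from far-away particles needs to be controlled. I may assume $Q := \Xi_0$ is infinite, as there is nothing to prove for a finite initial set. By Theorem~\ref{thm_main1_stable} the set $\Xi_t$ is almost surely finite for every $t > 0$, hence a random element of $(\kcal, d_H)$, and it suffices to prove $\lim_{t \downarrow 0} d_H(\Xi_t, Q) = 0$ almost surely. As in the gasket argument I would fix a nested sequence of finite sets $Q_1 \subseteq Q_2 \subseteq \cdots$ with $\#Q_m \to \infty$ and $\overline{\bigcup_m Q_m} = Q$, and coupled coalescing processes $\Xi^m$ with $\Xi^m_0 = Q_m$, $\Xi^1_t \subseteq \Xi^2_t \subseteq \cdots \subseteq \Xi_t$, and $\Xi_t = \overline{\bigcup_m \Xi^m_t}$. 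The Hausdorff convergence then reduces to two one-sided inclusions.

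For the inclusion $Q \subseteq (\Xi_t)^\delta$ the jumps cause no trouble. Given $\delta > 0$ I choose $m$ with $Q \subseteq (Q_m)^{\delta/2}$; since $Q_m$ consists of finitely many distinct points its first collision time is almost surely positive, so right-continuity of the c\`adl\`ag paths gives $d_H(\Xi^m_t, Q_m) \to 0$ as $t \downarrow 0$. Hence $(\Xi_t)^\delta \supseteq (\Xi^m_t)^\delta \supseteq (Q_m)^{\delta/2} \supseteq Q$ for all sufficiently small $t$, almost surely.

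The substantive inclusion is $\Xi_t \subseteq Q^\delta$ for small $t$, and here the discontinuity of the paths enters. I would control the range $\rcal(Q; [0, s_k])$ over a geometrically shrinking sequence of times $s_k$, on the order of $\gamma^{-\alpha k}$ --- reflecting the $n^{-\alpha}$ collision-time scaling from the introductory sketch, with $\alpha$ now playing the role that $\log_3 5$ played on the gasket. Using the stable analogue of Lemma~\ref{l:gasket_intermediate}, I would first drive the particle count down to $O(\gamma^k)$ while keeping the range inside $(Q)^{\delta/2}$, and then bound the probability that any of the remaining $O(\gamma^k)$ particles displaces more than $\delta/2$ over the residual time by comparison with independent free particles. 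The coalescence step contributes a bound exponentially small in $\gamma^k$, coming as before from Hoeffding's inequality applied to a sum of Bernoulli collision indicators. The displacement step, however, yields only a power-law bound: for a single stable particle the scaling $\sup_{s \le t}|X_s - X_0| \stackrel{d}{=} t^{1/\alpha}\sup_{s \le 1}|X_s - X_0|$ together with the index-$\alpha$ tail of the supremum gives $\prob\{\sup_{s \le t}|X_s - X_0| > r\} \le C t r^{-\alpha}$, so a union bound over $O(\gamma^k)$ particles over time $s_{k-1}$ produces a term of order $\gamma^k \cdot s_{k-1} \cdot \delta^{-\alpha} \asymp \gamma^{k(1-\alpha)}$.

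The main obstacle, and the feature distinguishing the stable case from the gasket case, is exactly this weaker displacement estimate: there is no exponential tail to exploit. The saving point is that $\alpha > 1$, so $\gamma^{k(1-\alpha)}$ decays geometrically in $k$; added to the exponentially small coalescence term, the resulting bound for $\prob\{\rcal(Q; [0, s_k]) \not\subseteq Q^\delta\}$ is still summable in $k$. Borel--Cantelli then gives $\rcal(Q; [0, s_k]) \subseteq Q^\delta$ for all large $k$ almost surely, hence $\Xi_t \subseteq Q^\delta$ for all sufficiently small $t$; letting $\delta \downarrow 0$ along a sequence and combining with the first inclusion yields $d_H(\Xi_t, Q) \to 0$ almost surely. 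The parts still requiring care are the verification that the maximal inequality for the stable process has the stated power-law form (a standard consequence of the one-big-jump heuristic for the running supremum of a L\'evy process with regularly varying jumps) and the exact bookkeeping of the time scales $s_k$ and fattening radii, so that the induction closing up the stable analogue of Lemma~\ref{l:gasket_intermediate} goes through with the correct $\alpha$-dependent exponents.
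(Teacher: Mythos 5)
Your proposal is correct and takes essentially the route the paper itself prescribes: the paper omits this proof, stating that it follows the gasket argument (Theorem~\ref{thm_main2_gasket}) along similar but simpler lines with the stable hitting-time and maximal estimates substituted, which is precisely what you reconstruct. One small correction to your bookkeeping: the stable analogue of Lemma~\ref{l:gasket_intermediate} (Lemma~\ref{l:stable_intermediate}) yields only a power-law bound $C\gamma^{-\eta k}$, not a bound exponentially small in $\gamma^k$ --- each stage of its induction carries a power-law displacement term from Corollary~\ref{cor:sup_bound_stable}, which dominates the Hoeffding term --- but since $\eta > 0$ this is still geometrically decaying in $k$, so your summability and Borel--Cantelli step closes exactly as you describe.
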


\begin{theorem}[Instantaneous local finiteness]
\label{thm:discrete_stable}
Let $\Xi$ be the set-valued coalescing stable process on $\mathbb{R}$ with
$\Xi_0$ a possibly unbounded closed set. 
Almost surely, $\Xi_t$ is a locally finite set for all $t \ge 0$.
\end{theorem}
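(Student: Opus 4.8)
The plan is to follow the architecture of the proof of Theorem~\ref{thm:discrete_gasket}: tile the line by unit cells, run a partial coalescing system in each cell, and use Borel--Cantelli to show that only finitely many cells ever inject a particle into a given bounded window. First I would reduce the statement. By the stable scaling $X\stackrel{d}{=}(c^{-1/\alpha}X_{ct})_{t\ge0}$ together with translation invariance, it suffices, for each fixed pair $0<t_1<t_2$ and each fixed bounded interval $I_0$, to prove that almost surely $\Xi_t\cap I_0$ is finite for all $t\in[t_1,t_2]$; the full conclusion then follows by intersecting over a countable exhaustion of $\mathbb{R}$ by such windows and over rational endpoints. (Local finiteness can fail at $t=0$, so I read the statement as holding for $t>0$.)

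Next I would set up the decomposition. Let $D$ be a countable dense subset of $\Xi_0$, put $J_k:=[k,k+1)$ and $D_k:=D\cap J_k$, and let $\Xi^{(k)}$ be the set-valued coalescing stable system built from independent free paths started at the points of $D_k$, with coalescence permitted only within a single cell. As in Theorem~\ref{thm:discrete_gasket}, a suitable ranking couples these so that $\Xi_t\subseteq\bigcup_k\Xi^{(k)}_t$ for every $t$. By Theorem~\ref{thm_main1_stable} each $\Xi^{(k)}_t$ is almost surely finite for $t>0$, since $\overline{D_k}$ is compact; moreover $t\mapsto\#\Xi^{(k)}_t$ is non-increasing, so at most $N_k:=\#\Xi^{(k)}_{t_1}$ distinct trajectories operate on $[t_1,t_2]$. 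Writing $E_k$ for the event that $\Xi^{(k)}$ meets $I_0$ at some time in $[t_1,t_2]$, the target becomes $\sum_k\prob(E_k)<\infty$, after which Borel--Cantelli and the finiteness of each $\Xi^{(k)}_t$ give that $I_0\cap\bigcup_k\Xi^{(k)}_t$, hence $\Xi_t\cap I_0$, is finite for all $t\in[t_1,t_2]$.

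To estimate $\prob(E_k)$ I would split according to whether the cloud ever comes within half the distance to $I_0$; set $\rho_k:=\tfrac12\,\mathrm{dist}(J_k,I_0)$, which is of order $|k|$. By the strong Markov property of the set-valued process (the Corollary following Proposition~\ref{p:inv_dense_set}), conditioning at time $t_1$ shows that every coalescing trajectory on $[t_1,t_2]$ coincides at each instant with one of the $N_k$ independent free stable paths started from the points of $\Xi^{(k)}_{t_1}$. Hence, using the stable maximal estimate $\prob^x\{\sup_{s\le t}|X_s-x|\ge r\}\le C\,t/r^{\alpha}$ (immediate from scaling and the $r^{-\alpha}$ tail of the running maximum) and the uniform survivor bound $\E[\#\Xi^{(k)}_{t_1}]\le m(t_1)$ — the line analogue of the tail estimate $\prob\{\#\Xi_t>m\}\le (C'/t)\,m^{1-\alpha}$ sketched in the Introduction, supplied by the disjoint-pairs-plus-large-deviations reduction behind Lemma~\ref{l:pigeon_hole} and Theorem~\ref{thm_main1_stable} — the contribution of those survivors still at distance $\ge\rho_k$ from $I_0$ at time $t_1$ is at most $m(t_1)\,C(t_2-t_1)\,\rho_k^{-\alpha}$. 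Since $\alpha>1$ gives $\sum_k\rho_k^{-\alpha}<\infty$, this part of $\sum_k\prob(E_k)$ converges, and it remains to bound the excursion probability $p_k:=\prob\{\Xi^{(k)}\text{ meets the }\rho_k\text{-neighbourhood of }I_0\text{ at some }t\le t_1\}$ and to show $\sum_k p_k<\infty$.

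The hard part will be $p_k$, and this is exactly where the line departs from the gasket. For Brownian motion on the gasket the whole range of the cloud during its instantaneous reduction is pinned into a small fattening, because the displacement has exponential tails and the range bound of Lemma~\ref{l:gasket_main} survives the passage to the infinitely dense initial set (the sum over scales in Lemma~\ref{l:gasket_intermediate} converges super-exponentially). For a stable process the displacement tail is only polynomial, so a single large jump can produce an excursion of size $|k|$; worse, near $t=0$ the cloud is infinitely dense, and the naive first-moment count of large jumps by surviving particles is of order $|k|^{-\alpha}\int_0^{t_1}\E[\#\Xi^{(k)}_t]\,dt$, which diverges for $\alpha<2$ because $\E[\#\Xi^{(k)}_t]$ is of order $t^{-1/(\alpha-1)}$. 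Thus neither the gasket range lemma nor a first-moment estimate controls $p_k$. My plan here is to use the stable scaling to rewrite $p_k$ as the probability that a dense coalescing cloud in a unit cell reaches a target at distance of order $1$ within the small time $\tau_k:=t_1|k|^{-\alpha}$, to read off from continuity at time zero (Theorem~\ref{thm_main2_stable}) that this probability tends to $0$ as $\tau_k\to0$, and then to upgrade this qualitative smallness to a quantitative rate $p_k=O(\tau_k^{\theta})$ with $\theta>1/\alpha$ — so that $\sum_k p_k$ is dominated by $\sum_k|k|^{-\alpha\theta}<\infty$ — by means of a large-deviation argument that exploits the near-instantaneous coalescence to suppress the transient near-time-zero multitude of particles. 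Securing this rate is the crux of the proof.
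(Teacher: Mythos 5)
Your frame --- scaling reduction, cell decomposition, per-cell partial coalescing systems, the coupling $\Xi_t\subseteq\bigcup_k\Xi^{(k)}_t$, and Borel--Cantelli --- matches the paper's proof of Theorem~\ref{thm:discrete_stable}. But the proposal has genuine gaps, and the reason you give for abandoning the paper's range-control route rests on a miscalculation. You explicitly leave the crux unproved: the bound $p_k=O(\tau_k^{\theta})$ with $\theta>1/\alpha$ is only a hope, since Theorem~\ref{thm_main2_stable} is qualitative and carries no rate, and no mechanism for the ``large-deviation upgrade'' is given. Worse, your argument that the paper's approach cannot work uses the count rate $\E[\#\Xi^{(k)}_t]\sim t^{-1/(\alpha-1)}$, which comes from the Introduction's one-collision-at-a-time sketch. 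The actual machinery (Lemma~\ref{l:stable_main}, iterated in Lemma~\ref{l:stable_intermediate}) reduces the particle count by a constant \emph{factor} in time of order $(\ell/n)^{\alpha}$, so on a good event the count at time $t$ is of order $t^{-1/\alpha}$, and the per-scale displacement errors $C_1 n^{-\eta}$ sum geometrically along the scales, uniformly in the initial cardinality, giving total failure probability $C_2\gamma^{-\eta k}$. This is precisely the stable analogue of the gasket range lemma: it does survive the passage to an infinitely dense initial set, polynomial tails notwithstanding, and it is exactly what the paper uses to control your $p_k$.

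Second, the unit cells $J_k=[k,k+1)$ cannot be pushed through with the estimates available. For a cell at distance $R\asymp|k|$ from the window, the paper's lemmas give a per-cell failure probability of at best $C\gamma^{-\eta b}+C'\gamma^{b}t_2R^{-\alpha}$ (coalescence down to $\gamma^b$ survivors, then a union bound on their displacements); optimizing over $b$ yields order $R^{-\alpha\eta/(1+\eta)}$, and since $\eta<\alpha-1$ and $\alpha<2$ the exponent satisfies $\alpha\eta/(1+\eta)<\alpha-1<1$, so the sum over unit cells diverges. Your substitute for this step --- the uniform bound $\E[\#\Xi^{(k)}_{t_1}]\le m(t_1)$ --- is not supplied by anything in the paper: the Introduction's tail $\prob\{\#\Xi_t>m\}\le (C'/t)\,m^{1-\alpha}$ and the lemmas' tail $C\gamma^{-\eta k}$ at level $\gamma^k$ both have exponents at most $1$ when $1<\alpha\le 2$, hence do not give a finite mean. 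The paper's resolution is not a sharper per-cell rate but a different decomposition: cells $J_{r,i}$ of \emph{growing} width $\asymp r^{d}$ with $d=2/\eta$, whose distance to the window is $\asymp r^{d+1}$, one full power of $r$ larger than their width. Taking $b(r)=(2/\eta)\lceil\log_\gamma r\rceil$ then gives the per-cell bound $C_2' r^{-2}+C_3' r^{-\alpha}$, which is summable because far cells are sparse. That choice of geometry is the key idea your proposal is missing; with it, the argument closes using only Lemma~\ref{l:stable_intermediate} and Corollary~\ref{cor:sup_bound_stable}, and no new rate needs to be secured.
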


We now proceed to establish hitting time estimates and maximal inequalities
for stable processes that are analogous to those established for
Brownian motions on the finite and infinite gaskets in Section~\ref{S:gasket}.
With these in hand, the proofs of Theorem~\ref{thm_main1_stable} 
and Theorem~\ref{thm_main2_stable} follow along similar, but simpler, lines
to those in the proofs of the corresponding results for the gasket
(Theorem~\ref{thm_main1_gasket} and Theorem~\ref{thm_main2_gasket}), and
so we omit them.  However, the proof of Theorem~\ref{thm:discrete_stable}
is rather different from that of its gasket counterpart (Theorem~\ref{thm:discrete_gasket}),
and so we provide the details at the end of this section.

\begin{lemma} 
Let $Z = X' - X''$ where $X'$ and $X''$ are two independent copies of $X$,
so that $Z$ is a symmetric stable process with index $\alpha$.
For any $0< \delta < \beta$,
\[ \prob^z\{ Z_t =0 \text{ { for some} } t \in (\delta , \beta)\} > 0.\]
\end{lemma}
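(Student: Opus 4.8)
The plan is to follow the potential-theoretic route of Proposition~\ref{prop:diag_nonpolar}: first show that the single point $\{0\}$ is non-polar for the symmetric $\alpha$-stable process $Z$, and then upgrade this to hitting inside the prescribed window $(\delta,\beta)$ by two applications of the Markov property together with the exact scaling of $Z$. Write $p_t$ for the (jointly continuous, strictly positive) transition density of $Z$ and recall the scaling relation $p_t(x) = t^{-1/\alpha} p_1(t^{-1/\alpha} x)$, so that $p_t(0) = p_1(0)\, t^{-1/\alpha}$. Taking the measure $\nu = \delta_0$ in the energy criterion quoted in the proof of Proposition~\ref{prop:diag_nonpolar}, the $1$-potential density on the diagonal is
\[
u_1(0,0) = \int_0^\infty e^{-t} p_t(0)\,dt = p_1(0)\int_0^\infty e^{-t} t^{-1/\alpha}\,dt = p_1(0)\,\Gamma\!\left(1 - \tfrac1\alpha\right) < \infty ,
\]
the integral converging at $0$ precisely because $\alpha>1$ forces $1/\alpha<1$ (cf. Lemma~\ref{lem:int}). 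Hence $\delta_0$ has finite energy, $\{0\}$ is non-polar, and $h(w) := \prob^w\{Z_t = 0 \text{ for some } t>0\}$ is strictly positive for some $w$. Since $p_t>0$ everywhere, the same reasoning as at the end of the proof of Proposition~\ref{prop:diag_nonpolar} promotes this to $h(w)>0$ for \emph{every} $w$ (in particular $h(0)>0$).

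Next I would reduce the statement to a localization fact. Fix $z \in \mathbb{R}$ and $0<\delta<\beta$, and set $g_\epsilon(w) := \prob^w\{Z_s = 0 \text{ for some } s \in (0,\epsilon)\}$. Applying the Markov property at time $\delta$ gives
\[
\prob^z\{Z_t = 0 \text{ for some } t \in (\delta,\beta)\} = \int_{\mathbb{R}} p_\delta(z,w)\, g_{\beta-\delta}(w)\, dw ,
\]
so, because $p_\delta(z,\cdot)>0$ everywhere, it suffices to show that $\{w : g_{\beta-\delta}(w)>0\}$ has positive Lebesgue measure. Here the scaling of $Z$ enters: since the law of $Z$ started at $0$ is scale invariant, the quantity $\prob^0\{Z_t = 0 \text{ for some } t \in (a,2a)\}$ does not depend on $a>0$; were it zero, covering $(0,\infty)$ by the windows $(2^k,2^{k+1})$, $k\in\mathbb{Z}$ (the countably many dyadic endpoints being hit with probability zero), would force $h(0)=0$, contradicting the previous paragraph. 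Thus $\prob^0\{Z_t = 0 \text{ for some } t \in (\epsilon,2\epsilon)\}>0$ for every $\epsilon>0$, and one further application of the Markov property at time $\epsilon$,
\[
\prob^0\{Z_t = 0 \text{ for some } t \in (\epsilon,2\epsilon)\} = \int_{\mathbb{R}} p_\epsilon(0,w)\, g_{\epsilon}(w)\, dw ,
\]
shows that $\{w : g_\epsilon(w)>0\}$ has positive Lebesgue measure for each $\epsilon>0$. Taking $\epsilon = \beta-\delta$ completes the reduction and hence the proof.

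The main obstacle is precisely the passage from "$\{0\}$ is hit from some single starting point at some single time" to "$\{0\}$ is hit within a fixed short window from a whole positive-measure set of starting points." Scaling alone is useless here, since it moves one starting point to another single point and never produces a set of positive measure; the resolution is to average against the strictly positive density $p_\epsilon(0,\cdot)$ via the Markov property at a strictly positive time, which is exactly what converts a measure-zero pointwise statement into a positive-measure one. Establishing $h(0)>0$ and the scale invariance of the fixed-ratio hitting probability, and then feeding these through the two Markov averagings above, is the crux; the remaining ingredients—the energy computation and the strict positivity of $p_t$—are standard and parallel the gasket arguments.
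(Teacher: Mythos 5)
Your proof is correct, but it justifies the key input differently than the paper does. The paper's proof is essentially a citation: it invokes Kesten's theorem as stated in Bertoin's book (Theorem 16 of \cite{MR1406564}), which says directly that single points are \emph{not essentially polar} for $Z$ when $\alpha>1$ --- i.e.\ the set of starting points from which $\{0\}$ is hit with positive probability has positive Lebesgue measure --- and then compresses the entire localization to the window $(\delta,\beta)$ into the phrase ``the Markov property and the transition density.'' You instead derive the non-polarity input self-containedly by running the energy criterion of Proposition~\ref{prop:diag_nonpolar} with $\nu=\delta_0$, computing $u_1(0,0)=p_1(0)\Gamma(1-1/\alpha)<\infty$; in the symmetric case this is equivalent, via Fourier inversion, to Kesten's integral criterion $\int \Re\bigl(1/(1+\Psi(\lambda))\bigr)\,d\lambda<\infty$, so the two inputs carry the same content, and your route has the advantage of unifying the stable case with the gasket argument without an external citation. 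One caution: your first paragraph records the criterion's conclusion as ``$h(w)>0$ for \emph{some} $w$'' and then promotes to all $w$ ``as in Proposition~\ref{prop:diag_nonpolar}''; a single good starting point cannot be fed into the averaging $\int p_\eps(w,y)h(y)\,dy$, so the criterion must be read as yielding non-\emph{essential} polarity (positive capacity forces $h>0$ on a set of positive Lebesgue measure, since otherwise the equilibrium potential would vanish a.e.\ and have zero energy). This is the same implicit reading the paper relies on, and you flag the issue yourself, so it is a point to make explicit rather than a gap. Your localization --- scale invariance of $\prob^0\{Z_t=0 \text{ for some } t\in(a,2a)\}$, the dyadic covering of $(0,\infty)$, and the two Markov averagings at times $\eps=\beta-\delta$ and $\delta$ --- is complete and correct, and supplies exactly the details the paper omits.
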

\begin{proof}
The proof follows from \cite[Theorem 16]{MR1406564} which says that the single points are not essentially polar for the process $Z$, the fact that $Z$ has a continuous symmetric transition density with respect to Lebesgue measure, and the Markov property of the $Z$.
 \end{proof}
 
 It is well-known that symmetric stable process $Z$ on $\real$ with
 index greater than one hit points (see, for example, \cite[Chapter VIII, Lemma 13]{MR1406564}). Thus there exists a $0< \beta< \infty $ so that
 \[ 0< \prob^1\{  Z_t = 0 \text { for some } t \in (0, \beta) \}=: \underline{p} \text{ (say)}. \]
 By scaling,
  \[ \prob^\eps \{  Z_t = 0 \text { for some } t \in (0, \beta\eps^{\alpha}) \}= \underline{p}. \]
 
 \begin{lemma}
Suppose that $X'$ and $X''$ are two independent stable processes on $\real$ starting at $x'$ and $x''$.  For any $\eps>0$,
\[  \inf_{|x' -x''| \le \eps} \prob\{ X_t' = X_t'' \text{ for some } t \in (0, \beta\eps^{\alpha}) \} = \underline{p}.\]
 \end{lemma}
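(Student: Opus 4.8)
The plan is to reduce everything to the single symmetric stable process $Z := X' - X''$, for which the collision event $\{X_t' = X_t''\}$ is exactly $\{Z_t = 0\}$, and then to combine the scaling identity recorded immediately before the lemma with the trivial monotonicity of hitting probabilities in the length of the time window. Writing $z := x' - x''$, so that $Z$ starts from $z$ with $|z| \le \eps$ and setting $T_0 := \inf\{t > 0 : Z_t = 0\}$, the quantity to be controlled is $\prob^z\{T_0 \in (0, \beta\eps^\alpha)\}$, and since $Z$ is symmetric this depends only on $|z|$.

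First I would dispatch the inequality $\inf \le \underline{p}$: the value $\underline{p}$ is attained when $|z| = \eps$, which is precisely the scaling computation stated just above the lemma, namely $\prob^z\{T_0 \in (0, \beta\eps^\alpha)\} = \underline{p}$ for $|z| = \eps$. Hence the infimum over $|z| \le \eps$ is at most $\underline{p}$.

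For the reverse inequality $\inf \ge \underline{p}$, the key observation is that the very same scaling relation applies with the starting distance $|z|$ in place of $\eps$. Applying the scaling property of the stable process (with $c = |z|^\alpha$ in $Z \stackrel{d}{=} (c^{-1/\alpha} Z_{ct})_{t \ge 0}$) gives, for every $z \ne 0$, that $\prob^z\{T_0 \in (0, \beta|z|^\alpha)\} = \underline{p}$. Since $|z| \le \eps$ forces $\beta|z|^\alpha \le \beta\eps^\alpha$, the event of hitting $0$ within the shorter window is contained in that of hitting $0$ within the longer one, so $\prob^z\{T_0 \in (0, \beta\eps^\alpha)\} \ge \prob^z\{T_0 \in (0, \beta|z|^\alpha)\} = \underline{p}$. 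The degenerate case $z = 0$ is immediate, because $0$ is regular for $\{0\}$ when $\alpha > 1$, so that probability equals $1 \ge \underline{p}$. Combining the two directions yields the claimed equality.

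I expect no genuinely hard analytic step here; the only thing to get right is to resist proving spatial monotonicity of $z \mapsto \prob^z\{T_0 < \beta\eps^\alpha\}$, which is delicate for a process with jumps (a naive synchronous coupling fails, since the process can overshoot the nearer level and hit the farther one first) and, crucially, is not needed. The clean route is to scale the time window down to $\beta|z|^\alpha$, where scaling pins the hitting probability to exactly $\underline{p}$ regardless of $|z|$, and then enlarge the window back to $\beta\eps^\alpha$, which can only increase the probability.
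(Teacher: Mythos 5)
Your proof is correct and is essentially the argument the paper intends: the lemma is stated there without an explicit proof, immediately after the scaling identity $\prob^\eps\{Z_t = 0 \text{ for some } t \in (0,\beta\eps^{\alpha})\} = \underline{p}$, and your argument just combines that same identity (applied at scale $|x'-x''|$) with the monotonicity of the hitting event in the length of the time window. Your explicit handling of the degenerate case $x'=x''$ via regularity of points for $\alpha>1$, and your remark that spatial monotonicity of the hitting probability is neither available nor needed for a jump process, are both sound and fill in exactly the details the paper leaves implicit.
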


Since $X_t' = X_t''$ always implies that $\exp(2 \pi i X_t' ) = \exp (2 \pi iX_t'')$ (but converse is not true), we have the following corollary of the above lemma.
\begin{corollary}
 If $Y'$ and $Y''$ are two independent stable processes on $\cir$ starting at $y'$ and $y''$, then  for any $\eps>0$
\[  \inf_{|y' -y''| \le 2 \pi \eps} \prob\{ Y_t' = Y_t'' \text{ for some } t \in (0, \beta\eps^{\alpha}) \} \ge \underline{p}.\]
 \end{corollary}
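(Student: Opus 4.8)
The plan is to realize the two circle-valued processes as images of real-valued stable processes under the covering map $x \mapsto e^{2\pi i x}$ and then transfer the hitting estimate of the preceding lemma from the line to the circle. The key observation is that, by definition, $Y_t := e^{2\pi i X_t}$ is the stable process on $\cir$, and that a collision of the lifted processes on $\real$ forces a collision of their projections on $\cir$, even though the converse fails (which is why we can only hope for the lower bound $\ge \underline{p}$ rather than equality).

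First I would lift the starting configuration. Fix $y', y'' \in \cir$ with $|y' - y''| \le 2\pi\eps$, where $|\cdot|$ denotes arc length. Writing $y' = e^{2\pi i \theta'}$ and $y'' = e^{2\pi i \theta''}$, the arc-length distance on the unit circle equals $2\pi \min_{k \in \intgr} |\theta' - \theta'' - k|$, so the hypothesis $|y'-y''| \le 2\pi\eps$ says precisely that some pair of real lifts $x', x''$ (with $e^{2\pi i x'} = y'$ and $e^{2\pi i x''} = y''$) satisfies $|x' - x''| \le \eps$. I choose such a pair.

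Next I would set up the coupling. Let $X'$ and $X''$ be independent stable processes on $\real$ started at $x'$ and $x''$, and put $Y'_t := e^{2\pi i X'_t}$, $Y''_t := e^{2\pi i X''_t}$. Then $Y'$ and $Y''$ are independent stable processes on $\cir$ started at $y'$ and $y''$; the law of each $Y$ depends only on its circle-valued starting point and not on the chosen lift, since shifting a lift by an integer leaves $e^{2\pi i \,\cdot}$ unchanged. Because $X'_t = X''_t$ implies $e^{2\pi i X'_t} = e^{2\pi i X''_t}$, i.e.\ $Y'_t = Y''_t$, the event that the line processes collide during $(0, \beta\eps^\alpha)$ is contained in the event that the circle processes do. Hence, applying the preceding lemma to $X', X''$ (whose starting points lie within distance $\eps$),
\[
\prob\{ Y'_t = Y''_t \text{ for some } t \in (0, \beta\eps^\alpha)\}
\ge
\prob\{ X'_t = X''_t \text{ for some } t \in (0, \beta\eps^\alpha)\}
\ge \underline{p}.
\]
Since this holds for every admissible pair $y', y''$, taking the infimum over $|y' - y''| \le 2\pi\eps$ gives the claim.

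There is no genuinely hard step here; the result is a direct consequence of the line estimate together with the fact that the covering map collapses $\real$ onto $\cir$. The only point demanding care is the bookkeeping of the factor $2\pi$ in the lifting step: one must verify that arc-length distance on the unit circle (of circumference $2\pi$) corresponds, under the parametrization $x \mapsto e^{2\pi i x}$, to a factor-$2\pi$ scaling of the distance between lifts, so that $|y'-y''| \le 2\pi\eps$ indeed furnishes lifts at distance at most $\eps$. Everything else follows from the inclusion of collision events and monotonicity of probability.
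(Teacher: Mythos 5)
Your proof is correct and is essentially the paper's own argument: the paper derives this corollary in one line from the preceding lemma via exactly your observation, namely that $Y_t = e^{2\pi i X_t}$ and $X_t' = X_t''$ implies $e^{2\pi i X_t'} = e^{2\pi i X_t''}$, so a collision of suitably chosen lifts (at distance at most $\eps$) forces a collision on the circle. Your write-up just makes the lifting and coupling bookkeeping explicit.
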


\begin{lemma} [\cite{MR1406564}]\label{lem:stable_tail}
Suppose that $X$ is an $\alpha$-stable process on the real line. 
There exists a constant $C>0$ such that 
\[ \prob^0 \left\{ \sup_{ 0 \le s \le 1} |X_s|  > u \right \} \le C u^{-\alpha}, \quad u \in \real_+. \]
\end{lemma}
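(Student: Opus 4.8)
The plan is to reduce the tail of the running maximum to the tail of $X_1$ at the single time $1$ by means of an Ottaviani-type maximal inequality, and then to invoke the classical heavy-tail asymptotics of stable laws together with the self-similarity of $X$.

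First I would record the one-dimensional estimate $\prob^0\{|X_1| > u\} \le C' u^{-\alpha}$. For a non-Gaussian strictly stable law each tail $\prob^0\{\pm X_1 > u\}$ is regularly varying of index $-\alpha$, i.e.\ $O(u^{-\alpha})$ (see \cite{MR1406564}), so $\prob^0\{|X_1| > u\}$ is bounded by $C' u^{-\alpha}$ for all large $u$; for bounded $u$ the same bound holds trivially after enlarging $C'$, since probabilities never exceed $1$ while $u^{-\alpha}$ stays bounded below on any bounded range. I would also note that $X_1$ has a continuous, strictly positive density, so $X_1$ has no atoms and $\prob^0\{|X_1| \le v\} > 0$ for every $v > 0$.

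Next, fix $v > 0$ and set $T := \inf\{ s \ge 0 : |X_s| > u + v\}$, an $(\fcal_s)$-stopping time. On $\{T \le 1\}$ right-continuity gives $|X_T| \ge u+v$, so whenever $|X_1 - X_T| \le v$ we have $|X_1| \ge (u+v) - v = u$, and hence (as $X_1$ has no atoms) $\one_{\{T \le 1,\, |X_1 - X_T| \le v\}} \le \one_{\{|X_1| > u\}}$ up to a null set. Writing $h(s) := \prob^0\{|X_{1-s}| \le v\}$ and using that $X$ has stationary independent increments, the strong Markov property at $T$ yields $\prob^0\{|X_1 - X_T| \le v \mid \fcal_T\} = h(T)$ on $\{T\le 1\}$, so that
\[
\prob^0\{|X_1| > u\} \ge \E^0\!\left[\one_{\{T \le 1\}}\, h(T)\right] \ge \prob^0\{T \le 1\}\,\inf_{0 \le s \le 1} h(s).
\]
Since $\{T \le 1\} = \{\sup_{0 \le s \le 1}|X_s| > u+v\}$ and $\inf_{0\le s\le 1} h(s) = \inf_{0\le r\le 1}\prob^0\{|X_r|\le v\}$, and since the scaling relation $X_r \stackrel{d}{=} r^{1/\alpha} X_1$ makes $r \mapsto \prob^0\{|X_r| \le v\} = \prob^0\{|X_1| \le v r^{-1/\alpha}\}$ nonincreasing on $(0,1]$, this last infimum equals $q := \prob^0\{|X_1| \le v\} > 0$. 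Therefore
\[
\prob^0\!\left\{ \sup_{0 \le s \le 1} |X_s| > u + v \right\} \le q^{-1}\,\prob^0\{|X_1| > u\} \le q^{-1} C'\, u^{-\alpha}.
\]

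Finally I would remove the shift by $v$. Taking $v = 1$, for $u \ge 2$ the displayed bound applied at level $u-1 \ge u/2$ gives $\prob^0\{\sup_{s\le 1}|X_s| > u\} \le q^{-1}C'(u-1)^{-\alpha} \le 2^{\alpha} q^{-1} C'\, u^{-\alpha}$, while for $u < 2$ the claim is immediate since $u^{-\alpha} > 2^{-\alpha}$ and the probability is at most $1$; absorbing all constants into a single $C$ gives the stated inequality for every $u \in \real_+$. The only genuinely delicate point is the uniform positive lower bound on the denominator $\inf_{0\le r\le 1}\prob^0\{|X_r|\le v\}$, which is precisely where the self-similarity of the stable process enters; everything else is the standard Ottaviani/L\'evy maximal argument combined with the known stable tail asymptotics, so I expect no further obstacles.
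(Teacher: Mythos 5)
Your proof is correct, but there is nothing in the paper to compare it against step by step: the authors do not prove this lemma at all, they quote it directly from Bertoin's book \cite{MR1406564}, so the ``paper's proof'' is a citation. Your argument is the standard Ottaviani/L\'evy maximal-inequality route, and it makes the lemma self-contained modulo the one-dimensional tail bound $\prob^0\{|X_1|>u\}\le C'u^{-\ga}$: the strong Markov property at $T=\inf\{s\ge 0: |X_s|>u+v\}$ does give $\prob^0\{|X_1-X_T|\le v\mid \fcal_T\}=h(T)$ on $\{T\le 1\}$; the scaling identity $X_r \stackrel{d}{=} r^{1/\ga}X_1$ (valid here since the paper's stable processes are strictly stable) correctly shows $r\mapsto \prob^0\{|X_r|\le v\}$ is nonincreasing, so the denominator $q=\prob^0\{|X_1|\le v\}$ is a genuine uniform lower bound and is positive because $X_1$ has a positive density (true in the paper's setting $\ga>1$; for subordinator-type cases with $\ga<1$ one would instead argue that $0$ lies in the support); and the final shift-removal with $v=1$ is routine. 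Two cosmetic points only: the asserted identity $\{T\le 1\}=\{\sup_{0\le s\le 1}|X_s|>u+v\}$ is really just the inclusion $\{\sup_{0\le s\le 1}|X_s|>u+v\}\subseteq\{T\le 1\}$, since right-continuity only gives $|X_T|\ge u+v$, but that inclusion is exactly the direction your bound uses; and in the totally asymmetric case one of the two tails of $X_1$ decays faster than $u^{-\ga}$, which only helps your first step. What your approach buys is a complete elementary proof usable without consulting the reference; what the citation buys the authors is brevity, since the estimate is available ready-made in \cite{MR1406564}.
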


\begin{corollary}\label{cor:sup_bound_stable}
(a) Let $X^1, X^2, \ldots, X^n$ be independent stable processes of index $\ga > 1$ on $\real$ starting from $x^1, x^2, \ldots, x^n$ respectively. Then for each $x \in \real_+$ and $t > 0$,
\[   \prob \Big \{ \sup_{ 0 \le s \le t} |X^i_s -x^i |  > u \text{ for some } 1 \le i \le n \Big \} \le Cn t u^{-\ga}.\]
(b) The same bound holds for $n$ independent stable processes on $\cir$ when $u < \pi$.
\end{corollary}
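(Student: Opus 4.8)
The plan is to reduce part (a) to the single-process tail bound of Lemma~\ref{lem:stable_tail} through a union bound, spatial homogeneity, and the scaling property of the stable process, and then to deduce part (b) from part (a) by exploiting that the covering map $x \mapsto e^{2\pi i x}$ expands distances by a factor of at most $2\pi$.

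First I would apply a union bound,
\[
\prob \Big\{ \sup_{0 \le s \le t} |X^i_s - x^i| > u \text{ for some } 1 \le i \le n \Big\}
\le \sum_{i=1}^n \prob \Big\{ \sup_{0 \le s \le t} |X^i_s - x^i| > u \Big\},
\]
reducing matters to bounding each summand by $C t u^{-\ga}$. Since $X^i$ has stationary and independent increments, the process $(X^i_s - x^i)_{s \ge 0}$ under $\prob^{x^i}$ has the same law as $(X_s)_{s \ge 0}$ under $\prob^0$, which eliminates the dependence on the starting point. Next I would invoke the scaling property $X \stackrel{d}{=} (c^{-1/\ga} X_{ct})_{t \ge 0}$ with $c = 1/t$, which gives $(X_s)_{0 \le s \le t} \stackrel{d}{=} (t^{1/\ga} X_{s/t})_{0 \le s \le t}$, and hence $\sup_{0 \le s \le t} |X_s| \stackrel{d}{=} t^{1/\ga} \sup_{0 \le r \le 1} |X_r|$. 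Therefore
\[
\prob^0 \Big\{ \sup_{0 \le s \le t} |X_s| > u \Big\}
= \prob^0 \Big\{ \sup_{0 \le r \le 1} |X_r| > u\, t^{-1/\ga} \Big\}
\le C \big(u\, t^{-1/\ga}\big)^{-\ga} = C t u^{-\ga},
\]
the final inequality being Lemma~\ref{lem:stable_tail}. Summing over the $n$ processes yields the asserted bound $C n t u^{-\ga}$.

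For part (b) I would use that $Y_s = e^{2\pi i X_s}$, so that the shortest-path (arc-length) distance on $\cir$ between $Y_s$ and $y = e^{2\pi i x}$ equals $2\pi$ times the distance between $X_s$ and $x$ in $\real/\intgr$, and is in particular at most $2\pi |X_s - x|$. Consequently the event $\{\sup_{0 \le s \le t} |Y^i_s - y^i| > u\}$ is contained in the event $\{\sup_{0 \le s \le t} |X^i_s - x^i| > u/(2\pi)\}$, so applying part (a) with $u$ replaced by $u/(2\pi)$ gives the claim with $C$ replaced by $C(2\pi)^\ga$. When $u \ge \pi$ the left-hand side vanishes, since two points on the unit circle are never more than $\pi$ apart, so the restriction $u < \pi$ merely isolates the nontrivial regime.

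The argument is essentially routine; the only step that requires any care is the metric comparison in part (b), where one must use that the image under $x \mapsto e^{2\pi i x}$ of the straight segment joining $x$ to $X_s$ is an admissible path on $\cir$ whose length dominates the shortest-path distance, rather than conflating the arc-length metric with the ambient chordal distance.
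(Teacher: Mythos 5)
Your proof is correct and follows exactly the route the paper intends: the paper states this corollary without proof as an immediate consequence of Lemma~\ref{lem:stable_tail}, the implicit argument being precisely your union bound plus translation invariance and scaling for part (a), and projection through the covering map for part (b). Your filling-in of the details, including the metric comparison on $\cir$ and the observation that the case $u \ge \pi$ is vacuous, is accurate.
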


Again we set $\gc := 1/(1 - \underline{p} /5) >1$.   Fix $(\alpha-1)/2 < \eta < \alpha-1$ and define $h := 1 - (1+\eta)/\ga > 0$. Recall the definitions of $\tau^A_m$ and $\rcal(A;I)$.

\begin{lemma}\label{l:stable_main}
Fix $0 < \eps \le 1/2$. 

\noindent
(a) 
There is a constant $C_1 = C_1(\eps)$
such that $\Xi$ be  a set-valued coalescing stable
process in $\real$ with $\Xi_0 = A$, then
\begin{align} \label{eq:stable_real}
&\prob \left \{ \tau^A_{\lceil n\gc^{-1} \rceil } >  \beta (2\ell/n)^{\alpha}  \ \text{ \em or }  \rcal(A, [0,  \tau_{\lceil n \gc^{-1} \rceil }] ) \not \subseteq A^{\eps \ell n^{-h}}  \right\} 
\le C_1 n^{-\eta},
\end{align}
where $n = \# A$ and $\ell/2$ is the diameter of $A$.

\noindent
(b) Let $\Xi$ be  the set-valued coalescing process in $\cir$ with $\Xi_0 = A$, 
where $A$ has cardinality $n$. Then there exists constant $C_1 = C_1(\eps)$, independent of $A$, such that
\begin{align*} 
&\prob \left \{ \tau^A_{\lceil n\gc^{-1} \rceil } >  \beta (2/n)^{\alpha}  \ \text{ \em or }  \rcal(A, [0,  \tau_{\lceil n \gc^{-1} \rceil }] ) \not \subseteq A^{\eps n^{-h}}  \right\} 
\le C_1 n^{-\eta}.
\end{align*}
\end{lemma}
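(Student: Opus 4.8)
The plan is to adapt the argument of Lemma~\ref{l:gasket_main} to the stable setting, substituting for the gasket collision estimate (Theorem~\ref{l:lower_bdd_hitting_prob}) and maximal inequality (Lemma~\ref{l:supbound}) their stable analogues, namely the collision estimate for stable particles on $\real$ established above (two independent stable particles starting within distance $\eps$ collide by time $\beta\eps^{\alpha}$ with probability at least $\underline p$) and the maximal inequality of Corollary~\ref{cor:sup_bound_stable}. The one essential structural difference is that the stable maximal inequality has only a polynomial tail $u^{-\alpha}$, so the displacement step will yield a bound of order $n^{-\eta}$ rather than the exponentially small bound available on the gasket; this polynomial term will dominate and account for the stated right-hand side.

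For part (a) I would first control the survivor count. Since $A$ has diameter $\ell/2$ it lies in an interval of length $\ell/2$, which I partition into $\lfloor n/2\rfloor$ subintervals, each of length at most $2\ell/n$. By the pigeon hole principle (Lemma~\ref{l:pigeon_hole}) I can select at least $n/4$ disjoint pairs whose members lie in a common subinterval, hence start within distance $2\ell/n$. Passing to the partial coalescing system in which each particle may coalesce only with its designated partner---which never has fewer survivors than the true system---the collision estimate shows each pair coalesces by time $\beta(2\ell/n)^{\alpha}$ independently with probability at least $\underline p$. The number of such coalescences therefore stochastically dominates a $\mathrm{Binomial}(\lceil n/4\rceil,\underline p)$ variable, and since the number of coalescences needed to reach $\lceil n\gc^{-1}\rceil=\lceil(1-\underline p/5)n\rceil$ survivors is at most $n\underline p/5<n\underline p/4$, Hoeffding's inequality gives
\[
\prob\bigl\{\tau^A_{\lceil n\gc^{-1}\rceil} > \beta(2\ell/n)^{\alpha}\bigr\}\le e^{-C'n}
\]
for some $C'>0$.

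The crux is the displacement estimate. Since at each time $\Xi_s$ is contained in the set of positions of the free driving particles, the range $\rcal(A,[0,T])$ is contained in $A^{u}$ as soon as every free particle stays within distance $u$ of its starting point (which lies in $A$) on $[0,T]$; thus on $\{\tau^A_{\lceil n\gc^{-1}\rceil}\le T\}$ with $T=\beta(2\ell/n)^{\alpha}$ it suffices to control the maximal displacement over the fixed interval $[0,T]$. Taking $u=\eps\ell n^{-h}$ and applying Corollary~\ref{cor:sup_bound_stable}(a) produces a bound of order $n\,T\,u^{-\alpha}=C\beta 2^{\alpha}\eps^{-\alpha}\,n^{1-\alpha+h\alpha}$, and the exponent is exactly $-\eta$ by the definition $h=1-(1+\eta)/\alpha$; note that $\ell$ cancels, which is what makes $C_1$ depend only on $\eps$. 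Combining this with the previous display (whose exponentially small term is negligible) yields the claimed bound $C_1 n^{-\eta}$. I expect the calibration of $h$ against the polynomial tail to be the delicate point: the fattening radius $u$ must be small enough that the range stays localized yet large enough that escape is unlikely, and the stated choice of $h$ is precisely what balances these.

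Part (b) is identical in structure but simpler, since $\cir$ is compact and no diameter enters. Here the circle of circumference $2\pi$ replaces the interval: partitioning into $\lfloor n/2\rfloor$ arcs of length at most $2\pi(2/n)$ and applying the corresponding collision estimate on $\cir$ gives collision time $\beta(2/n)^{\alpha}$, the same Hoeffding argument controls the survivor count, and Corollary~\ref{cor:sup_bound_stable}(b)---valid since the displacement radius $u=\eps n^{-h}<\pi$ for all large $n$, with small $n$ absorbed into the constant---furnishes the $n^{-\eta}$ displacement bound, so that $\ell$ no longer appears.
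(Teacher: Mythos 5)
Your proposal is correct and follows essentially the same route as the paper's proof, which likewise partitions the interval containing $A$ into $n/2$ subintervals of length $2\ell/n$, extracts at least $n/4$ disjoint pairs via the pigeonhole lemma, runs the partial (pairs-only) coalescing system with Hoeffding to get the exponentially small bound on the survivor-count event, and controls the range by the free-particle maximal inequality of Corollary~\ref{cor:sup_bound_stable}, with the exponent $-\eta$ arising from precisely the calibration $h = 1-(1+\eta)/\alpha$ that you identify. The only cosmetic difference is that the paper picks the displacement radius as $\eps\, t_n^{1/\alpha} n^{(1+\eta)/\alpha} = 2\beta^{1/\alpha}\eps\ell n^{-h}$ and absorbs the stray factor into $\eps$, whereas you take $u=\eps\ell n^{-h}$ directly and absorb $\beta 2^{\alpha}\eps^{-\alpha}$ into $C_1$.
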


\begin{proof}
(a) Note that $A^{\eps\ell} \subseteq [a -\ell/2, a+\ell/2]$ for some $a \in \mathbb{R}$, and this
interval can be divided into $n/2$ subintervals of length $2\ell/n$. We follow closely the proof of Lemma \ref{l:gasket_main}. By considering a suitable partial coalescing particle system consisting of at least $n/4$ pairs of particles  where a pair can only coalesce if they have started from the same subinterval, we have that the number of surviving particles in the original coalescing system is at most $ \lceil \gc^{-1} n \rceil$ within time $t_n:= \beta (2\ell/n)^{\alpha}$ with error probability bounded by $\exp(-C_1' n)$.

 By Corollary \ref{cor:sup_bound_stable}, the maximum displacement of  $n$ independent stable  particles on $\real$ within time $t_n$ is at most 
\[ \eps (t_n)^{1/\alpha}  n^{ (1+\eta)/\alpha} =   2\beta^{1/\ga} \eps \ell n^{ -1+ (1+ \eta)/\ga} = 2\beta^{1/\ga} \eps  \ell n^{ -h}\]
with error probability at most  $c_2n^{-\eta}$.

(b) The proof for part (b) is similar.
\end{proof}

Using strong Markov property and  Lemma \ref{l:stable_main} repetitively as we did in the proof of Lemma \ref{l:gasket_intermediate}, we can obtain the following lemma. We omit the details.

\begin{lemma}\label{l:stable_intermediate}
Let $0 < \eps \le 1/2, \ell >0$ be given. Let  $\nu_i:=\eps \gamma^{- h i}$ and $\eta_i := \beta 2^{\ga} \gc^{-\ga i }$. 

\noindent
(a) Given a finite set $A \subset \real$, let $\Xi$ denote the set-valued coalescing stable process in $\real$ with $\Xi_0 =A$ .  Then, there exist constants $C_2 = C_2(\eps)$ such that 
 \begin{align*}
\prob &\Big \{ \tau^A_{\lceil \gc^k \rceil } > \ell^\ga \sum_{i=k+1}^m \eta_i \ \text{\em or }  \rcal(A; [0, \tau^A_{\lceil \gc^k \rceil}] )  \not \subseteq  (A)^{\ell \sum_{i=k+1}^m \nu_i}\Big \}\le C_2  \gc^{ -\eta k},
 \end{align*}
uniformly over all  sets $A$ such that $A \subseteq [a -\ell/4, a+\ell/4]$  for some $a \in \real$ and  $\# A = \lceil \gc^m \rceil$.
 
\noindent
(b)  Given a finite set $A \subset \cir$, let $\Xi$ denote the set-valued coalescing stable process in $\cir$ with $\Xi_0 =A$ .  Then, there exist constants $C_2 = C_2(\eps)$ such that 
 \begin{align*}
\prob &\Big \{ \tau^A_{\lceil \gc^k \rceil } > \sum_{i=k+1}^m \eta_i \ \text{\em or }  \Xi_{\tau^A_{\lceil \gc^k \rceil}}  \not \subseteq  (A)^{ \sum_{i=k+1}^m \nu_i}\Big \}\le C_2  \gc^{ -\eta k},
 \end{align*}
uniformly over all  sets $A \subseteq \cir$ such that $\# A = \lceil \gc^m \rceil$.
\end{lemma}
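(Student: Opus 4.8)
The plan is to induct on $m$ with $k$ held fixed, mirroring the proof of Lemma~\ref{l:gasket_intermediate} with Lemma~\ref{l:stable_main} playing the role that Lemma~\ref{l:gasket_main} played there; the only substantive change is that the gasket's containment in an extended triangle $\gD^e$ is replaced by the requirement that the particles remain inside an interval whose length stays comparable to $\ell$. The base case $m=k$ is vacuous, since then $\tau^A_{\lceil\gc^k\rceil}=0$, both sums are empty, and the range over $[0,0]$ is just $A\subseteq A$. For the inductive step I would apply the strong Markov property of the set-valued coalescing process at the stopping time $\tau^A_{\lceil\gc^{m-1}\rceil}$, conditioning on the configuration $A':=\Xi_{\tau^A_{\lceil\gc^{m-1}\rceil}}$ reached when the particle count first falls from $\lceil\gc^m\rceil$ to $\lceil\gc^{m-1}\rceil$.

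First I would bound the bad event by the union of two pieces: (i) the first reduction, from $\lceil\gc^m\rceil$ to $\lceil\gc^{m-1}\rceil$ particles, either takes longer than $\ell^\alpha\eta_m$ or pushes the range outside $A^{\ell\nu_m}$; and (ii) on the complement of (i) the subsequent reductions, from $\lceil\gc^{m-1}\rceil$ down to $\lceil\gc^k\rceil$, fail relative to the shifted budgets $\ell^\alpha\sum_{i=k+1}^{m-1}\eta_i$ and $(A')^{\ell\sum_{i=k+1}^{m-1}\nu_i}$. Piece (i) is precisely what Lemma~\ref{l:stable_main}(a) controls when applied to $A$: since $\#A=\lceil\gc^m\rceil$ and the diameter of $A$ is at most $\ell/2$, the time threshold there is at most $\ell^\alpha\eta_m$ and the fattening at most $\ell\nu_m$, so its probability is at most $C_1(\lceil\gc^m\rceil)^{-\eta}\le C_1'\gc^{-\eta m}$. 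On the good part of (i) we have $A'\subseteq A^{\ell\nu_m}$, a set of $\lceil\gc^{m-1}\rceil$ points, so the strong Markov property turns piece (ii) into the same bad event for the coalescing system restarted from $A'$ with $m$ replaced by $m-1$, which is governed by the induction hypothesis. The two time budgets add, and $(A^{\ell\nu_m})^{\ell\sum_{i=k+1}^{m-1}\nu_i}\subseteq A^{\ell\sum_{i=k+1}^m\nu_i}$ reassembles the fattening, recovering the stated thresholds.

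The error probabilities then accumulate geometrically: the step reducing $\lceil\gc^i\rceil$ to $\lceil\gc^{i-1}\rceil$ contributes $C_1'\gc^{-\eta i}$, and since $\gc>1$ and $\eta>0$ we get $\sum_{i=k+1}^m C_1'\gc^{-\eta i}\le \frac{C_1'}{1-\gc^{-\eta}}\,\gc^{-\eta(k+1)}=C_2\,\gc^{-\eta k}$, which is exactly the claimed bound (this polynomial rate replaces the stretched-exponential rate of the gasket, reflecting the heavy tails of the stable process). Part (b) on $\cir$ is a direct and easier mirror: the circle is already bounded, so there is no spatial scale to track, Lemma~\ref{l:stable_main}(b) supplies the per-step estimate against the fixed thresholds $\eta_i$ and $\nu_i$, and the identical geometric summation closes the induction.

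The hard part will be the bookkeeping that keeps the spatial scale comparable to the fixed $\ell$, so that each invocation of Lemma~\ref{l:stable_main} genuinely uses scale of order $\ell$ rather than a diameter that has crept upward over the course of the iteration---this is the real-line analogue of staying inside $\gD^e$ in the gasket argument. The hypothesis $A\subseteq[a-\ell/4,a+\ell/4]$ deliberately reserves a margin of $\ell/4$ on each side, and because the cumulative fattening $\ell\sum_i\nu_i$ is a geometric series with ratio $\gc^{-h}<1$, the fattened configuration at every level remains inside an interval of length comparable to $\ell$; the $\eps$-dependent constant $C_2(\eps)$ is what absorbs the resulting numerical factors, including those coming from the displacement estimate in Corollary~\ref{cor:sup_bound_stable}. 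Verifying this containment level by level, and confirming that the time and fattening budgets telescope into $\ell^\alpha\sum_{i=k+1}^m\eta_i$ and $\ell\sum_{i=k+1}^m\nu_i$ once these geometric series are controlled, is the single point that requires genuine care.
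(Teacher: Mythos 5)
Your proposal matches the paper's approach exactly: the paper in fact omits this proof, stating only that one should apply the strong Markov property and Lemma~\ref{l:stable_main} repeatedly ``as we did in the proof of Lemma~\ref{l:gasket_intermediate}'', which is precisely the induction on $m$ that you carry out. Your additional attention to the spatial-scale bookkeeping on the line (the margin built into $A \subseteq [a-\ell/4, a+\ell/4]$ together with the geometric decay of the $\nu_i$) supplies exactly the details the paper leaves to the reader, and on the circle the argument simplifies just as you say.
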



\begin{proof}[Proof of Theorem \ref{thm:discrete_stable}]
By scaling, it is enough to show that  for each $0< t_1<t_2<\infty$, almost surely, the set  $\Xi_t \cap [-1,1]$ is finite for each $t \in [t_1, t_2]$. Set $d := 2/\eta$. For $r\ge 1$, define  
\[
J_{r,1} := \Big [ - \sum_{j=1}^{r} j^{d}, -\sum_{j=1}^{r-1} j^{d} \Big) 
 \ \ \text{ and }   
 \ \ J_{r,2} := \Big [\sum_{j=1}^{r-1} j^{d}, \sum_{j=1}^{r} j^{d} \Big).
\] 
 Then the collection $\{J_{r,i}\}_{ r \ge 1, i=1,2}$ forms a partition of the real line into bounded sets.   Note that $ \inf_{x \in [-1,1], y \in J_{r,i}} |x- y| \asymp r^{d+1}$ as $r \to \infty$.

Let $D$ be a countable dense subset  of $Q$.  Run a partial coalescing system starting from $D$ such that two particles coalesce if and only if they collide and both belonged initially to the same $J_{r,i}$. Let $(\Xi_t^{r,i})_{t \ge 0}$ denote the set-valued coalescing process  consisting of the (possibly empty) subset of the particles starting from $D \cap J_{r,i}$. 
By arguing similarly as in the proof of Theorem \ref{thm:discrete_gasket},  it  suffices to prove that the set  $[-1,1] \cap  \Xi_t^{r, i}$ is empty  for all $t \in [t_1, t_2]$ for all but finitely many pairs $(r,i)$ almost surely.

Fix a pair $(r, i)$. Find  $\eps >0$ such that $  \sum_{i=0}^\infty \nu_i  \le 1/2$ which implies that 
$(J_{r,i})^{ \sum_{i=0}^\infty \nu_i} \subseteq (J_{r,i})^{r^d}$.   
Let $A_1 \subseteq A_2 \subseteq \ldots$ be  an increasing sequence of finite sets
such that for $\bigcup_m A_m = D \cap J_{r, i}$.  Let $\wt \Xi^m$ be a coalescing
set-valued stable processes such that $\wt \Xi_0^m = A_m$ and couple
these processes together so that 
$\wt \Xi_t^1 \subseteq \wt \Xi_t^2 \subseteq \ldots \subseteq \Xi_t^{r,i}$.
Set $b = b(r) := (2/\eta) \lceil \log_\gc r \rceil $. Note  that by Lemma~\ref{l:stable_intermediate}, Corollary~\ref{cor:sup_bound_stable}, and  the fact that there exists $c_1>0$ such that for all $r$ sufficiently large
\[ 
\inf_{x \in [-1,1], y \in J_{r,i}} |x- y| - r^d \ge c r^{d+1}, 
\] 
we can write
\[
\begin{split}
& \prob \Big\{ \Xi_t^{r,i} \cap [-1, 1]  \ne \emptyset \text{ for some } t \in [t_1, t_2] \Big\} \\ 
& \quad = \lim_{m\to \infty}\prob \Big\{ \wt \Xi_t^m \cap [-1,1] \ne \emptyset \text{ for some } t \in [t_1, t_2] \Big\} \\
& \quad \le \limsup_{m\to \infty} \prob \Big\{ \tau^{A_m}_{\lceil \gc^b \rceil } > \sum_{i=b+1}^\infty \eta_i \text{ or }  \wt \Xi_{ \tau^{A_m}_{\lceil \gc^b \rceil } }^m \not \subseteq (J_{r,i})^{r^d} \text{ or max displacement }\\
& \qquad \text{of the remaining $\lceil \gc^b \rceil$  coalescing particles in $[ \tau^{A_m}_{\lceil \gc^b \rceil }, t_2]  > cr^{d+1}$}\Big\} \\
& \quad \le \limsup_{m\to \infty} \prob \Big\{ \tau^{A_m}_{\lceil \gc^b \rceil } > \sum_{i=r+1}^\infty \eta_i \text{ or }  \wt \Xi_{ \tau^{A_m}_{\lceil \gc^b \rceil } }^m \not \subseteq (J_{r,i})^{r^d} \Big\} \\
& \qquad + \prob\Big\{ \text{max displacement of  $\lceil \gc^b \rceil$  independent  particles in $[0, t_2] >cr^{d+1}$}\Big\}  \\
   & \le C_2 \gc^{-\eta b}  + C_3\lceil \gc^b \rceil c r^{-\ga(d+1)}
  \le C_2' r^{-2} + C_3' r^{-\ga} \\
\end{split}
\]
for suitable constants $C_2', C_3'>0$. The proof now follows from the Borel-Cantelli lemma.
\end{proof}

\newcommand{\etalchar}[1]{$^{#1}$}
\providecommand{\bysame}{\leavevmode\hbox to3em{\hrulefill}\thinspace}
\providecommand{\MR}{\relax\ifhmode\unskip\space\fi MR }
\providecommand{\MRhref}[2]{%
  \href{http://www.ams.org/mathscinet-getitem?mr=#1}{#2}
}
\providecommand{\href}[2]{#2}

\end{document}